\def\R{\mathrm{I\kern-0.21emR}}
\def\N{\mathrm{I\kern-0.21emN}}
\renewcommand{\geq}{\geqslant}
\renewcommand{\leq}{\leqslant}
\newtheorem{theorem}{Theorem}
\newtheorem{proposition}{Proposition}
\newtheorem{lemma}{Lemma}
\theoremstyle{definition}
\theoremstyle{definition}\newtheorem{remark}{Remark}
\newcommand{\Fmax}{F_{\textrm{max}}}
\begin{document}

\title{Linear turnpike theorem}

\author{Emmanuel Tr\'elat\footnote{Sorbonne Universit\'e, CNRS, Universit\'e de Paris, Inria, Laboratoire Jacques-Louis Lions (LJLL), F-75005 Paris, France (\texttt{emmanuel.trelat@sorbonne-universite.fr}).}
}

\date{}
\maketitle
\begin{abstract}
The turnpike phenomenon stipulates that the solution of an optimal control problem in large time remains essentially close to a steady-state of the dynamics, itself being the optimal solution of an associated static optimal control problem. Under general assumptions, it is known that not only the optimal state and the optimal control, but also the adjoint state coming from the application of the Pontryagin maximum principle, are exponentially close to that optimal steady-state, except at the beginning and at the end of the time frame. In such a result, the turnpike set is a singleton, which is a steady-state.

In this paper, we establish a turnpike result for finite-dimensional optimal control problems in which some of the coordinates evolve in a monotone way, and some others are partial steady-states of the dynamics. We prove that the discrepancy between the optimal trajectory and the turnpike set is linear, but not exponential: we thus speak of a linear turnpike theorem.

 \end{abstract}

\section{Introduction and main results}


\subsection{Reminders on the exponential turnpike phenomenon}\label{sec_reminders}
Let $n,m\in\N^*$ and let $T>0$ be arbitrary. We consider a general nonlinear optimal control problem in $\R^n$, in fixed final time $T$: 
\begin{align}
& \dot x(t) = f(x(t),u(t)) \label{syst} \\[1mm]
& x(0)\in M_0,\quad x(T)\in M_1 \label{terminalconditions} \\
&  u(t)\in\Omega \label{syst_Omega} \\
& \min \int_0^T f^0(x(t),u(t))\, dt \label{mincost}
\end{align}
where $f:\R^n\times\R^m\rightarrow\R^n$ and $f^0:\R^n\times\R^m\rightarrow\R$ are mappings of class $C^2$, 
$\Omega$ is a measurable subset of $\R^m$,
$M_0$ and $M_1$ are bounded subsets of $\R^n$, and $u\in L^\infty([0,T],\Omega)$ is the control.
For simplicity of the exposition, we assume that there exists $T_0>0$ such that, for every $T\geq T_0$, there exists a unique optimal solution of \eqref{syst}-\eqref{terminalconditions}-\eqref{syst_Omega}-\eqref{mincost}, denoted by $(x^T(\cdot),u^T(\cdot))$ (sufficient conditions ensuring existence are standard, see, e.g., \cite{trelat_book}; uniqueness is ensured under differentiability properties of the value function and of the Hamiltonian, see, e.g., \cite[Theorem 7.3.9]{CannarsaSinestrari}, and thus is generic in some sense). 
By the Pontryagin maximum principle (see \cite{Pontryagin}), there exist $p^0\leq 0$ and an absolutely continuous mapping $p_x^T(\cdot):[0,T]\rightarrow\R^n$ (called adjoint vector), satisfying $(p_x^T(\cdot),p^0)\neq(0,0)$, such that
\begin{equation}\label{extremal_syst}
\begin{split}
& \dot x^T(t) = \frac{\partial H}{\partial p_x}(x^T(t),p_x^T(t),p^0,u^T(t)),
\qquad \dot p_x^T(t) = -\frac{\partial H}{\partial x}(x^T(t),p_x^T(t),p^0,u^T(t)) \\
&\qquad\qquad\qquad u^T(t) = \underset{v\in\Omega}{\mathrm{argmax}}\, H(x^T(t),p_x^T(t),p^0,v) 
\end{split}
\end{equation}
for almost every $t\in[0,T]$, where $H(x,p_x,p^0,u) = \langle p_x,f(x,u)\rangle + p^0 f^0(x,u)$
is the Hamiltonian of the optimal control problem \eqref{syst}-\eqref{terminalconditions}-\eqref{syst_Omega}-\eqref{mincost} (here and throughout, $\langle\cdot,\cdot\rangle$ is the Euclidean scalar product in $\R^n$, and $\Vert\cdot\Vert$ is the corresponding norm). We have moreover the transversality conditions on the adjoint vector at initial and final time: 
\begin{equation}\label{transversality}
p_x^T(0)\perp T_{x^T(0)}M_0,\qquad p_x^T(T)\perp T_{x^T(T)}M_1,
\end{equation}
provided that the tangent space $T_{x^T(0)}M_0$ to $M_0$ at $x^T(0)$ and the tangent space $T_{x^T(T)}M_1$ to $M_1$ at $x^T(T)$ exist (these conditions are empty when the initial and final points are fixed in the optimal control problem).
We assume that the extremal lift $(x^T(\cdot),p_x^T(\cdot),p^0,u^T(\cdot))$ is not abnormal and thus that we can take $p^0=-1$. We also assume, for the simplicity of the exposition, that the extremal lift is unique (these two conditions are generic for large classes of optimal control problems, see \cite{CJT_JDG2006, CJT_SICON2008}).

Note that the last condition in \eqref{extremal_syst} gives
$$
\frac{\partial H}{\partial u}(x^T(t),p_x^T(t),p^0,u^T(t)) = 0
$$
at every Lebesgue time $t\in[0,T]$ at which $u^T(\cdot)$ is approximately continuous and $u^T(t)\in\mathring{\Omega}$ (interior of $\Omega$).

\medskip

The turnpike phenomenon stipulates that, when the final time $T$ is large, the optimal solution $(x^T(\cdot),p_x^T(\cdot),u^T(\cdot))$ remains, along the time interval $[0,T]$ and except around the initial time $t=0$ and around the final time $t=T$, ``essentially close" to some static point $(\bar x,\bar p_x,\bar u)\in\R^n\times\R^n\times\Omega$, i.e., roughly speaking, 
\begin{equation}\label{roughturnpike}
x^T(t) \simeq\bar x,\quad p_x^T(t) \simeq\bar p_x,\quad u^T(t) \simeq\bar u \qquad \forall t\in[\eta,T-\eta]
\end{equation}
(for some $\eta>0$). Moreover, $(\bar x,\bar p_x,\bar u)$ is the solution of a minimization problem called \textit{static} optimal control problem (not depending on $T$), which is:
\begin{equation}\label{staticpb}
\min \left\{ f^0(x,u) \ \mid\ (x,u)\in \R^n\times\Omega,\ f(x,u)=0 \right\}
\end{equation}
i.e., in other words, the problem of minimizing the instantaneous cost $f^0(x,u)$ over all possibles equilibrium points of the dynamics.
Note that, assuming that $\bar u\in\mathring{\Omega}$ (interior of $\Omega$), by the Lagrange multiplier rule, expressed in a Hamiltonian form, applied to the static optimal control problem, we have
\begin{equation}\label{staticextr}
\frac{\partial H}{\partial p_x}(\bar x,\bar p_x,p^0,\bar u)=0,\qquad
\frac{\partial H}{\partial x}(\bar x,\bar p_x,p^0,\bar u)=0,\qquad
\frac{\partial H}{\partial u}(\bar x,\bar p_x,p^0,\bar u)=0 .
\end{equation}
Here we assume as well, for the simplicity of the exposition, that the minimizer $(\bar x,\bar u)\in\R^n\times\mathring{\Omega}$ is unique, and also that the quadruple $(\bar x,\bar p_x,p^0,\bar u)$ is not abnormal, so that one can take $p^0=-1$ (usual qualification condition in optimization theory) and that the Lagrange multiplier is unique (the latter two conditions are generic).

\medskip

We note then that $(\bar x,\bar p_x,-1,\bar u)$ is an equilibrium point of \eqref{extremal_syst}. Hence, the informal property \eqref{roughturnpike} stipulates that the optimal triple $(x(\cdot),p_x(\cdot),u(\cdot))$, solution of \eqref{syst}-\eqref{terminalconditions}-\eqref{syst_Omega}-\eqref{mincost}-\eqref{extremal_syst}-\eqref{transversality} remains essentially close to an \emph{equilibrium point} of the dynamics \eqref{extremal_syst}, this equilibrium being itself an optimal solution of the static problem \eqref{staticpb}. This is the turnpike phenomenon.

The intuition explaining why such a property is to be expected is the following. Since $T$ is assumed to be large, we set $\varepsilon=\frac{1}{T}$, which is considered as a small parameter. Now, we make a time reparametrization, by setting $s=\varepsilon t$, so that, when $t$ ranges over $[0,T]$, $s$ ranges over $[0,1]$. Setting $\mathrm{x}_\varepsilon(s)=x^T(t)$, $\mathrm{p}_\varepsilon(s)=p_x^T(t)$ and $\mathrm{u}_\varepsilon(s)=u^T(t)$, we have, still in an informal way,
\begin{equation}\label{informal}
\begin{split}
& \frac{\partial H}{\partial p_x}(\mathrm{x}_\varepsilon(s),\mathrm{p}_\varepsilon(s),-1,\mathrm{u}_\varepsilon(s)) = \varepsilon\mathrm{x}_\varepsilon'(s) \simeq 0 \\
& \frac{\partial H}{\partial x}(\mathrm{x}_\varepsilon(s),\mathrm{p}_\varepsilon(s),-1,\mathrm{u}_\varepsilon(s)) = -\varepsilon\mathrm{p}_\varepsilon'(s) \simeq 0 \\
& \frac{\partial H}{\partial u}(\mathrm{x}_\varepsilon(s),\mathrm{p}_\varepsilon(s),-1,\mathrm{u}_\varepsilon(s)) = 0
\end{split}
\end{equation}
(the latter condition, assuming that the control is in the interior of $\Omega$)
which gives credibility to \eqref{roughturnpike}, at least when $(\bar x,\bar p_x,\bar u)$ is the unique solution of \eqref{staticextr} (and under nondegenerate assumptions on the system of equations \eqref{staticextr}). But this is not a proof and this remains informal: indeed, to justify \eqref{informal} one would need that the derivatives $\mathrm{x}_\varepsilon'(s)$ and $\mathrm{p}_\varepsilon'(s)$ be bounded (except around $t=0$ and $t=T$), uniformly with respect to $\varepsilon$, i.e., that the derivatives $T\dot x^T(t)$ and $T\dot p_x^T(t)$ be bounded (except around $t=0$ and $t=T$), uniformly with respect to $T$, which is precisely the hard part in establishing a turnpike result.

\medskip

The main result of \cite{TZ} states an \emph{exponential turnpike property} which is of a local nature (see Remark \ref{rem_local}). Hereafter, we combine this result with a general result of \cite{TrelatZhang_MCSS2018} in order to obtain a global exponential turnpike property. We use the notion of \emph{strict dissipativity}, introduced in \cite{Willems} and already used in \cite{Faulwasser_TAC2017, Grune1, GruneMuller_SCL2016, GruneGuglielmi_SICON2018} to derive turnpike properties. The family of optimal control problems \eqref{syst}-\eqref{syst_Omega}-\eqref{mincost}, indexed by $T>0$, is said to be \emph{strictly dissipative} at the optimal static point $(\bar x, \bar u)$ with respect to the \emph{supply rate function} 
\begin{equation}\label{supply}
w(x,u)= f^0(x,u)-f^0(\bar x,\bar u)
\end{equation}
if there exists a locally bounded function $S:\R^n\rightarrow\R$, called \emph{storage function}, such that
\begin{equation}\label{dissip}
S(x(t_0))+\int_{t_0}^{t_1} w(x(t),u(t))\,dt\geq S(x(t_1)) +  \int_{t_0}^{t_1} \alpha\left( \Vert x(t)-\bar x\Vert, \Vert u(t)-\bar u\Vert \right) dt 
\end{equation}
(strict dissipativity inequality)
for all $t_0<t_1$, for any pair $(x(\cdot),u(\cdot))$ solution of \eqref{syst} and \eqref{syst_Omega}, for some function $\alpha$  of class $\mathcal{K}$, i.e., $\alpha:[0,+\infty)\rightarrow[0,+\infty)$ continuous, increasing, and $\alpha(0)=0$. Comments on dissipativity can be found in \cite{Brogliato,TrelatZhang_MCSS2018,Willems} (see also further).

We use hereafter the shorter notations
$$
\bar H_{\star\#} = \frac{\partial^2 H}{\partial\star\partial\#}(\bar x,\bar p_x,-1,\bar u)
$$
where $\star$ and $\#$ will be replaced with such or such variable. 
We set
$$
\bar A_1 = \bar H_{p_x x} = \frac{\partial f}{\partial x}(\bar x,\bar u),\qquad
\bar B_1 = \bar H_{p_x u} = \frac{\partial f}{\partial u}(\bar x,\bar u), \qquad \bar U=-\bar H_{uu}
$$
and, since we assume hereafter that $\bar U$ is invertible,
$$
A_1 = \bar A_1 + \bar B_1 \bar U^{-1} \bar H_{ux}, \qquad \bar W = -\bar H_{xx} - \bar H_{xu}\bar U^{-1}\bar H_{ux} .
$$

\begin{theorem}\label{thm_TZ}
We make the following assumptions:\\
Assumptions of global nature:
\begin{enumerate}[label=(\roman*)]
\item\label{H1_unique1} There exist $K>0$ and $T_0>0$ such that, for every $T\geq T_0$, there exists a unique optimal triple $(x^T(\cdot),p_x^T(\cdot),u^T(\cdot))$ solution of \eqref{syst}-\eqref{terminalconditions}-\eqref{syst_Omega}-\eqref{mincost}-\eqref{extremal_syst}-\eqref{transversality} (assumed to be normal); 
moreover, $\Vert x^T(t)\Vert+\Vert u^T(t)\Vert\leq K$ for every $t\in[0,T]$.
\item\label{H1_unique2} There exists a unique optimal triple $(\bar x,\bar p_x,\bar u)$ solution of \eqref{staticpb}-\eqref{staticextr}, assumed to be normal; 
moreover, $\bar u\in\mathring{\Omega}$. 
\item\label{H1_dissip} The family of optimal control problems \eqref{syst}-\eqref{syst_Omega}-\eqref{mincost}, indexed by $T>0$, is strictly dissipative at the optimal static point $(\bar x, \bar u)$ with respect to the supply rate function $w$ defined by \eqref{supply}, with a storage function $S$ (satisfying \eqref{dissip}) that is bounded on $M_0$ and $M_1$.
\item\label{H1_cont} For $i=1,2$, there exist $t_i\geq 0$, a control $u_i(\cdot)\in L^\infty([0,t_i],\Omega)$ and a trajectory $x_i(\cdot)$ solution of $\dot x_i(t)=f(x_i(t),u_i(t))$ on $[0,t_i]$, satisfying $\Vert x_i(t)\Vert+\Vert u_i(t)\Vert\leq K$ for every $t\in[0,t_i]$ and the terminal conditions $x_1(0)\in M_0$, $x_1(t_1)=\bar x$, and $x_2(0)=\bar x$, $x_2(t_2)\in M_1$. 
\end{enumerate}
Assumptions of local nature:
\begin{enumerate}[label=(\roman*), start=5]
\item\label{H1_U} $\bar U=-\bar H_{uu}=-\frac{\partial^2 H}{\partial u^2}(\bar x,\bar p_x,-1,\bar u)$ is a positive definite symmetric matrix.
\item\label{H1_W} $\bar W$ is a positive definite symmetric matrix.
\item\label{H1_Kalman} The pair $(\bar A_1,\bar B_1)$ satisfies the Kalman condition\footnote{\label{footnote1}Equivalently, the pair $(A_1,\bar B_1)$ satisfies the Kalman condition.
}, i.e., $\mathrm{rank}(\bar B_1,\bar A_1\bar B_1,\ldots,\bar A_1^{n-1}\bar B_1)=n$. In other words, the linearized system at $(\bar x,\bar u)$ is controllable.
\end{enumerate}
Then, there exist $C>0$ and $\nu>0$ such that if $T\geq T_0$
then
\begin{equation}\label{turnpikeexp}
\boxed{
\Vert x^T(t)-\bar x\Vert + \Vert p_x^T(t)-\bar p_x\Vert + \Vert u^T(t)-\bar u\Vert \leq C \left( e^{-\nu t} + e^{-\nu(T-t)}\right) \qquad \forall t\in [0,T] 
}
\end{equation}
\end{theorem}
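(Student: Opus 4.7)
The plan is to combine the local exponential turnpike theorem of \cite{TZ}, valid for problems whose endpoints are close to $\bar x$, with a global dissipativity argument in the spirit of \cite{TrelatZhang_MCSS2018}. The role of \ref{H1_U}, \ref{H1_W} and \ref{H1_Kalman} is local: they make the linearization of the extremal Hamiltonian system hyperbolic at $(\bar x,\bar p_x)$. The role of \ref{H1_dissip} and \ref{H1_cont} is global: they force the optimal trajectory to approach $(\bar x,\bar u)$ somewhere in the interior of $[0,T]$.

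First, using the pieces $(x_1,u_1)$ and $(x_2,u_2)$ provided by \ref{H1_cont}, I build for every $T\geq t_1+t_2$ an admissible trajectory on $[0,T]$ that joins $M_0$ to $\bar x$ in time $t_1$, remains at $(\bar x,\bar u)$ on $[t_1,T-t_2]$, and joins $\bar x$ to $M_1$ in time $t_2$. Its cost is $T f^0(\bar x,\bar u)+O(1)$ uniformly in $T$. Combining this upper bound with the optimality of $(x^T,u^T)$ and the strict dissipativity inequality \eqref{dissip} applied on $[0,T]$ (using that $S$ is bounded on $M_0\cup M_1$), I obtain a uniform estimate
\[
\int_0^T \alpha\bigl(\Vert x^T(t)-\bar x\Vert,\,\Vert u^T(t)-\bar u\Vert\bigr)\,dt \leq C_0
\]
with $C_0$ independent of $T$. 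Since $\alpha$ is a $\mathcal{K}$-function, this $L^1$-bound forces the existence of $T_1>0$, independent of $T$, and of times $\tau_1\in[0,T_1]$ and $\tau_2\in[T-T_1,T]$ at which $(x^T(\tau_i),u^T(\tau_i))$ lies in an arbitrarily small preassigned neighbourhood of $(\bar x,\bar u)$: otherwise the integrand would already exceed $C_0$ on a subinterval of length $T_1$.

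Next, by the uniqueness assumption \ref{H1_unique1}, the restriction of $(x^T,p_x^T,u^T)$ to $[\tau_1,\tau_2]$ is the optimal extremal of the fixed-endpoint sub-problem with initial state $x^T(\tau_1)$ and final state $x^T(\tau_2)$, both arbitrarily close to $\bar x$. The local exponential turnpike theorem of \cite{TZ}, whose hypotheses are exactly \ref{H1_U}--\ref{H1_Kalman}, then yields constants $C>0$ and $\nu>0$ depending only on local data at $(\bar x,\bar p_x,\bar u)$ such that
\[
\Vert x^T(t)-\bar x\Vert+\Vert p_x^T(t)-\bar p_x\Vert+\Vert u^T(t)-\bar u\Vert \leq C\bigl(e^{-\nu(t-\tau_1)}+e^{-\nu(\tau_2-t)}\bigr)
\]
for $t\in[\tau_1,\tau_2]$. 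On the short end pieces $[0,\tau_1]$ and $[\tau_2,T]$, whose lengths are bounded by $T_1$ independently of $T$, the state and control are bounded by \ref{H1_unique1} and the adjoint is bounded by a standard Pontryagin argument using \ref{H1_U}; since $e^{-\nu t}\geq e^{-\nu T_1}$ on $[0,\tau_1]$ and symmetrically near $T$, enlarging $C$ absorbs these boundary layers and produces \eqref{turnpikeexp} on all of $[0,T]$.

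The main obstacle is the local exponential turnpike result imported from \cite{TZ}: it rests on a delicate Riccati/dichotomy analysis showing that, under \ref{H1_U}--\ref{H1_Kalman}, the extremal Hamiltonian flow is hyperbolic at $(\bar x,\bar p_x)$ and that the stable and unstable manifolds intersect transversally the fibres corresponding to nearby endpoint data; I would use it as a black box. The only genuinely delicate point of the present combined argument is ensuring that the dissipativity estimate is truly uniform in $T$, which is precisely what \ref{H1_cont} (existence of uniform-in-$T$ connecting arcs to and from $\bar x$) and the boundedness of $S$ on $M_0\cup M_1$ are designed for.
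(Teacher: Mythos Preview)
Your proposal is correct and follows essentially the same strategy as the paper: use dissipativity together with the connecting arcs from \ref{H1_cont} to obtain a uniform-in-$T$ integral bound $\int_0^T\alpha(\cdot)\,dt\leq C_0$, extract entering times $\tau_1,\tau_2$ at bounded distance from $0$ and $T$ where $x^T$ is near $\bar x$, invoke the local exponential turnpike from \cite{TZ} on $[\tau_1,\tau_2]$, and absorb the bounded-length end pieces into the constant. The paper does not give a standalone proof of Theorem~\ref{thm_TZ} but obtains it as the degenerate case $p=0$ of Theorem~\ref{turnpike_thm}; since that proof also has to track the extra $y$-variables, the entering-time argument there (Lemmas~\ref{lem_strict1}--\ref{lem_tau0tau1}) proceeds via subsequences and several bootstrap steps rather than your direct pigeonhole argument from the $L^1$ bound, and the local step is re-derived (Proposition~\ref{prop_sensitivity_dyn}) rather than quoted from \cite{TZ}. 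For Theorem~\ref{thm_TZ} alone your route is cleaner. One small point to tighten: your appeal to ``the uniqueness assumption \ref{H1_unique1}'' for the sub-problem on $[\tau_1,\tau_2]$ is not quite right, since \ref{H1_unique1} concerns the full problem with terminal sets $M_0,M_1$; what you actually need is the local uniqueness contained in the \cite{TZ} result (equivalently, item~\ref{casei} of Proposition~\ref{prop_sensitivity_dyn}), together with dynamic programming, to identify the restriction of $(x^T,p_x^T,u^T)$ with the locally optimal extremal of the fixed-endpoint sub-problem.
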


In other words, except around $t=0$ and $t=T$, the optimal triple $(x^T(\cdot),p_x^T(\cdot),u^T(\cdot))$ solution of \eqref{syst}-\eqref{terminalconditions}-\eqref{syst_Omega}-\eqref{mincost}-\eqref{extremal_syst}-\eqref{transversality} is exponentially close to the optimal triple $(\bar x,\bar p_x,\bar u)$ solution of \eqref{staticpb}-\eqref{staticextr}.
The constants $C$ and $\nu$ (not depending on $T$) can even be made explicit by solving a Riccati equation (see \cite{TZ} for details).

The local assumptions \ref{H1_U}-\ref{H1_W}-\ref{H1_Kalman} done in this theorem, which concern the linearization at the turnpike, have been chiefly discussed and commented in \cite{TZ}. 

The assumption \ref{H1_cont} is a global controllability assumption: there exists a trajectory steering in finite time the control system \eqref{syst} from $M_0$ to the turnpike $\bar x$, and from the turnpike $\bar x$ to the final set $M_1$.
Global controllability is usually established under appropriate Lie bracket assumptions (see, e.g., \cite{Jurdjevic} for control-affine systems, and \cite{AgrachevSachkov} for general nonlinear systems).
Assumption \ref{H1_cont} can also follow from the local controllability at the turnpike (Assumption \ref{H1_Kalman}) combined with the (weak) assumption of stabilizability of the system to the turnpike. The latter assumption is a weak one and is related to the dissipativity assumption \ref{H1_dissip}.

In Assumption \ref{H1_unique1}, the property of uniqueness of the solution of the optimal control problem and normality of the extremal triple is actually generic, in some sense: it is satisfied if the Hamiltonian system is sufficiently regular and if the value function corresponding to the optimal control problem is differentiable at the terminal points under consideration (see \cite[Theorem 7.3.9]{CannarsaSinestrari}). The $K$-boundedness assumption (uniform with respect to $T$) precludes any trajectory blow-up. Note that the boundedness of the adjoint vector can be deduced from the normality assumption for a large class of optimal control problems (see \cite{trelat_JDCS2000}); here, we will deduce it from the dissipativity assumption and from the normality of the static problem.
Assumption \ref{H1_unique2} is generic as well, and is related to the qualification of a usual optimization problem under constraints.
But, such comments are completely general in optimal control theory and we thus do not comment more on the generality of our assumptions.

Assumption \ref{H1_dissip} on dissipativity is certainly the most difficult to check in practice. When a control system is dissipative, the question of determining a storage function is similar to the one of finding an appropriate Lyapunov function in order to establish an asymptotic stability domain for a given equilibrium point of a dynamical system. Like in Lyapunov theory, storage functions can be determined by solving a nonlinear partial differential equation (see, e.g., \cite{Brogliato}).

It is proved in \cite{TrelatZhang_MCSS2018} that ``strong duality implies dissipativity". To be more precise, we recall that the static problem \eqref{staticpb} has the \emph{strong duality property} if $(\bar x,\bar u)$  minimizes the \emph{Lagrangian function} $L(\cdot,\cdot,\bar p_x): \R^n\times \R^m\times\R^n\rightarrow \R$ defined by $L(x,u,\bar p_x)= f^0(x,u)-\langle \bar p_x, f(x,u)\rangle$ (the sign minus is due to the fact that we took $p^0=-1$ for the Lagrange multiplier associated with the cost). This notion is well known in classical optimization theory, in relation with primal and dual problems. It is proved in \cite[Theorem 3]{TrelatZhang_MCSS2018} that the strong duality property implies the dissipativity property, with the storage function $S(x)=\langle \bar p_x,x\rangle$.

Note that strong duality is in some sense an infinitesimal version of the dissipativity inequality, at least when the storage function is continuously differentiable (see also \cite{Faulwasser_TAC2017, Grune2}).
Note also that the infinitesimal form of the (non-strict) dissipativity inequality \eqref{dissip}, with $\alpha=0$, is the Hamilton-Jacobi inequality 
$$
H_1(x,\nabla S(x),u)\leq -f^0(\bar x,\bar u) ,
$$
where $H_1$ is the maximized normal Hamiltonian (indeed, divide by $t_1-t_0>0$ and take the limit $t_1-t_0\rightarrow 0$). The existence of $C^1$ solutions is therefore related to the so-called weak KAM theory (see \cite{Fathi}). In this context, the singleton $\{(\bar x,\bar u)\}$ is the Aubry set and $f^0(\bar x,\bar u)$ is the Ma{\~{n}}\'e critical value. 

\begin{remark}
Under \ref{H1_U}-\ref{H1_W}-\ref{H1_Kalman}, 
Assumptions \ref{H1_unique1}-\ref{H1_unique2}-\ref{H1_dissip}-\ref{H1_cont} are automatically satisfied in the linear-quadratic case  where
$$
f(x,u) = \bar A_1 x+\bar B_1 u, \qquad f^0(x,u) = \frac{1}{2} (x-x_d)^\top\bar W(x-x_d)+ \frac{1}{2} (u-u_d)^\top\bar U(u-u_d)
$$
for some $x_d\in\R^n$ and $u_d\in\R^m$, and $\Omega=\R^m$.
\end{remark}

\begin{remark}\label{rem_local}
Theorem \ref{thm_TZ} does not appear as such in the literature. Compared with \cite{TZ} where the main result is established under the assumptions of local nature only (i.e., \ref{H1_U}-\ref{H1_W}-\ref{H1_Kalman}), we have added here assumptions of global nature (i.e., \ref{H1_unique1}-\ref{H1_unique2}-\ref{H1_dissip}-\ref{H1_cont}): compactness and uniqueness of minimizers, strict dissipativity, global controllability to and from the turnpike set, and then we obtain a \emph{global} turnpike property. 

When the minimizers are not unique or when the system is not dissipative, we obtain only a \emph{local} turnpike property, in the sense that the turnpike estimate \eqref{turnpikeexp} can only be expected to be satisfied in a neighborhood of an extremal steady-state $(\bar x,\bar p_x,\bar u)$ (see \cite{TZ}).
This is so, because the dynamics is nonlinear. In dynamical systems theory, when an equilibrium is not degenerate, the solutions of the nonlinear system resemble the solutions of the linearized system at the equilibrium, only locally around the equilibrium. The above turnpike theorem is of this nature. 

When the static problem has several local minimizers, then one has local turnpike properties. When the static problem has, for instance, two global minimizers, then there is a region of the state space in which globally optimal solutions of the optimal control problems have a turnpike property corresponding to the first global minimizer, and another region in which the turnpike property corresponds to the second global minimizer. We refer to Appendix \ref{sec_app} for a discussion on these topics and for some numerical simulations.
\end{remark}

The turnpike result of \cite{TZ} has been extended to the infinite-dimensional setting in \cite{TrelatZhang_MCSS2018, TrelatZhangZuazua_SICON2018} and in \cite{Faulwasser_arxiv2020, GruneSchallerSchiela_JDE2020}. One can find many turnpike results in the literature (see \cite{CarlsonHaurie, Faulwasser_TAC2017, GruneMuller_SCL2016, GruneGuglielmi_SICON2018, PorrettaZuazua_2013, Rapaport_2004, Zaslavski_2006, Zaslavski_2015}, and see historical references in \cite{FaulwasserGrune_handbook,TZ}), but the specificity of the above result is that the turnpike property is established as well for the adjoint vector: as explained in \cite{TZ}, this is particularly important in view of deriving appropriate initializations in numerical computation methods (direct methods or shooting methods, see \cite{Trelat_JOTA2012}).

\begin{remark}\label{rem_state_constraints}
In the optimal control problem \eqref{syst}-\eqref{terminalconditions}-\eqref{syst_Omega}-\eqref{mincost}, we have put some control constraints $u(t)\in\Omega$. This is a (slight) novelty with respect to \cite{TZ} where there is no control constraint. We say the novelty is slight, because we however assume that $\bar u\in\mathring{\Omega}$, so that, in the long middle part of the time interval, the optimal control, which is close to $\bar u$, does not saturate the constraints. Bang arcs can only occur at the beginning and at the end of the time frame.

We can as well add some state constraints: $x(t)\in M$ for every $t\in[0,T]$, where $M$ is a subset of $\R^n$. This does not lead to significant changes, provided that, in Assumption \ref{H1_unique2}, we assume not only that $\bar u\in\mathring{\Omega}$ but also that $\bar x\in\mathring{M}$.
In presence of state constraints, the extremal system \eqref{extremal_syst} may change: along a boundary arc, there is an additional adjoint vector $\mu_x^T(\cdot)$, standing for the Lagrange multiplier of the state constraint, which may live in a measure space (see, e.g., \cite{Clarke}).
Except the additional assumption $\bar x\in\mathring{M}$, the rest of Theorem \ref{thm_TZ} is unchanged. Indeed, the optimal trajectory remains exponentially close to the optimal steady-state, on an interval $[\varepsilon,T-\varepsilon]$, hence $x^T(\cdot)\in\mathring{M}$ along this interval and thus $\mu_x^T(\cdot)=0$ there along. This is only at the beginning and at the end that the multiplier $\mu_x^T(\cdot)$ may be nontrivial.

In presence of control and/or state constraints, if $\bar u\in\partial\Omega$ and/or $\bar x\in\partial M$, i.e., if the optimal steady-state saturates the contraints, it is not clear how to derive an exponential turnpike property by exploiting the Pontryagin extremal system, because our approach strongly relies on a linearization of this first-order optimality system at the optimal steady-state and requires smoothness properties of the extremal control. In such cases, the approach by dissipativity, which is softer, is more appropriate but leads to weaker turnpike results, like the ``measure-turnpike property" (see \cite{Faulwasser_TAC2017,  GruneMuller_SCL2016, TrelatZhang_MCSS2018}). 
\end{remark}

Until now, we have discussed a turnpike phenomenon around an \emph{equilibrium point} of the dynamics.
In the next section, we state a turnpike theorem for systems having some coordinates that evolve in a monotone way and thus have no equilibrium point. 
Theorem \ref{thm_TZ} (and its proof) will thus be a particular case of Theorem \ref{turnpike_thm}.

\subsection{A new result: linear turnpike phenomenon}
\subsubsection{Framework}
We keep the notations introduced in Section \ref{sec_reminders}, and we add to the control system \eqref{syst}-\eqref{syst_Omega}-\eqref{mincost} $p$ additional differential equations, for some $p\in\N^*$, as follows. 
Given any $T>0$, we consider the general optimal control problem in $\R^{n+p}$ in fixed final time $T$:
\begin{align}
&\displaystyle \dot x(t) = f(x(t),u(t))   \label{syst1} \\[1mm]
&\displaystyle \dot y(t) = g(x(t),u(t)) \label{syst2} \\[1mm]
&\displaystyle x(0)\in M_0, \quad x(T)\in M_1, \qquad y(0)=y_0, \quad y(T)=y_1^T  \label{terminalconditions_12} \\[1mm]
&\displaystyle u(t)\in\Omega \label{syst12_Omega} \\
&\displaystyle \min \int_0^T f^0(x(t),u(t))\, dt \label{mincost1}
\end{align}
where $y_0,y_1^T\in\R^p$ are fixed and $g:\R^n\times\R^m\rightarrow\R^p$ is of class $C^2$ (the rest of the assumptions is unchanged, with respect to Section \ref{sec_reminders}). We assume that $m\geq p$.

Compared with \eqref{syst}-\eqref{terminalconditions}-\eqref{syst_Omega}-\eqref{mincost}, we have added the $p$ differential equations $\dot y(t) = g(x(t),u(t))$, where $g$ does not depend on the new coordinate $y\in\R^p$. The instantaneous cost function $f^0$ also, still does not depend on $y$. This is important to derive the result hereafter. 

As in the previous section, the final time $T$ is fixed in the optimal control problem \eqref{syst1}-\eqref{syst2}-\eqref{terminalconditions_12}-\eqref{syst12_Omega}-\eqref{mincost1} and the linear turnpike property will describe the behavior of optimal solutions when $T$ is large.

In \eqref{terminalconditions_12}, the data $M_0$, $M_1$ and $y_0$ do not depend on $T$, but the final condition $y_1^T$ depends on $T$: we have thus put a superscript $T$ in order to underline this important dependence. We have indeed in mind to consider cases where $\Vert y_1^T\Vert$ is large as $T$ is large: in such cases, we expect that the components of $y(t)$ should evolve in a monotone way, i.e., $g(x(t),u(t))\neq 0$ along the optimal trajectory: we have then no equilibrium in the $y$ components. 

We are going to derive a turnpike result in which the ``turnpike set" consists of a \emph{partial equilibrium}, i.e., the turnpike is a path $t\mapsto(\bar x,\bar y(t),\bar u)\in\R^n\times\R^p\times\R^m$ where $f(\bar x,\bar u)=0$. 

\begin{remark}
Following \cite{FaulwasserOberBlobaum_2019, FaulwasserOberBlobaum_arxiv2020}, it is worth noting that such problems \eqref{syst1}-\eqref{syst2}-\eqref{syst12_Omega}-\eqref{mincost1} frequently appear in applications, for instance in mechanical problems where, denoting by $\mathrm{x}(t)$ a position variable, by $\mathrm{v}(t)=\dot{\mathrm{x}}(t)$ the speed variable, one has a system of the form
$$
\dot{\mathrm{x}}(t) = \mathrm{v}(t), \qquad \dot{\mathrm{v}}(t) = f(\mathrm{v}(t),\mathrm{u}(t)) .
$$
The result that we are going to derive allows one to obtain a turnpike phenomenon along a trajectory $\mathrm{x}(t) = \overline{\mathrm{v}} t$, where $\overline{\mathrm{v}}\in\R^n$ is such that $f(\overline{\mathrm{v}},\overline{\mathrm{u}})=0$. Here, $(\overline{\mathrm{v}},\overline{\mathrm{u}})$ is an equilibrium of $f$, but $(\overline{\mathrm{v}} t,\overline{\mathrm{v}},\overline{\mathrm{u}})$ is not an equilibrium of the complete dynamics. 
In \cite{FaulwasserOberBlobaum_2019, FaulwasserOberBlobaum_arxiv2020}, this phenomenon is called a ``velocity turnpike". Because of the estimate that we are going to establish in Theorem \ref{turnpike_thm} hereafter, we rather speak of a ``linear turnpike" phenomenon.
\end{remark}

Note that, in \eqref{syst1}-\eqref{syst2}-\eqref{terminalconditions_12}-\eqref{syst12_Omega}-\eqref{mincost1}, we have fixed the initial and final conditions on $y(t)$. Indeed, otherwise, if either $y(0)$ or $y(T)$ is let free, then there is nothing new, because $f$ and $f^0$ do not depend on $y$ and then \eqref{syst1}-\eqref{syst2}-\eqref{terminalconditions_12}-\eqref{syst12_Omega}-\eqref{mincost1} is then equivalent to \eqref{syst}-\eqref{terminalconditions}-\eqref{syst_Omega}-\eqref{mincost}, with the coordinate $y(t)$ evolving without having any impact on the optimal control problem \eqref{syst}-\eqref{terminalconditions}-\eqref{syst_Omega}-\eqref{mincost}.
But here, we impose $y(0)=y_0$ and $y(T)=y_1^T$, which creates additional constraints with respect to \eqref{syst}-\eqref{terminalconditions}-\eqref{syst_Omega}-\eqref{mincost}, and then the turnpike result stated in Theorem \ref{thm_TZ} cannot be applied.

\subsubsection{A motivating example}
In order to shed light on what can be expected, let us consider the very simple example in $\R^2$:
\begin{equation}\label{simple_example}
\begin{array}{lll}
\displaystyle \dot x(t) = u(t), &  x(0)=1, & x(T)=2 \\[2mm]
\displaystyle \dot y(t) = x(t), \quad & y(0)=0, & y(T)=y_1^T \\[2mm]
\displaystyle \min \int_0^T (x(t)^2+u(t)^2)\, dt &&
\end{array}
\end{equation}
for which the optimal trajectory can be computed explicitly. Here, $y_1^T\in\R$ is a smooth function of $T$ that is sublinear at infinity. For instance, one can take $y_1^T=\alpha T$ or $y_1^T=\alpha T\sin(T)$ for some $\alpha\in\R$.
We also assume that $\Omega=\R$, i.e., there is no control constraint.

In a first step, if we ignore the dynamics in $y$ in \eqref{simple_example}, then we are in the framework of Theorem \ref{thm_TZ}. The static optimal control problem consists of minimizing $x^2+u^2$ under the constraint $u=0$, which gives the optimal solution $\bar x=0$, $\bar u=0$ with the Lagrange multiplier $\bar p_x=0$ (which is normal; here, we take $p^0=-1/2$).
By Theorem \ref{thm_TZ}, we obtain $\vert x^T(t)\vert +\vert p_x^T(t)\vert
+\vert u^T(t)\vert \leq C ( e^{-\nu t} + e^{-\nu(T-t)} )$ for every $t\in[0,T]$. In other words, roughly, $x(t)\simeq 0$ and $u(t)\simeq 0$, except around $t=0$ and $t=T$.

Now, let us take into account the additional dynamics in $y$ with the terminal conditions on $y$. 
We are not anymore in the framework of Theorem \ref{thm_TZ}, because there is no term in $y(t)^2$ in the cost and therefore the matrix $W$ is not positive definite. 
We define the ``turnpike-static" optimal control problem (depending on $T$) as
\begin{equation}\label{stpbex}
\min \{ x^2+u^2 \ \mid\ (x,u)\in\R\times\R,\quad u=0,\quad Tx=y_1^T \}
\end{equation}
i.e., with respect to the previous static problem, we add the terminal constraints coming from the dynamics in $y$: the unique solution of the Cauchy problem $\dot y(t)=x$ (with $x$ constant), $y(0)=0$, must satisfy $y(T)=y_1^T$. 

It is easy to check that the (unique) optimal solution of \eqref{stpbex} is $\bar x^T=\frac{y_1^T}{T}$, $\bar u^T=0$, with Lagrange multipliers $\bar p_x^T=0$ associated with the constraint $u=0$, and $\bar p_y^T=\frac{y_1^T}{T}$ associated with the constraint $x=\frac{y_1^T}{T}$ (which are normal; here, we take $p^0=-1/2$). 
The corresponding trajectory in $y$ is $\bar y^T(t)=\frac{y_1^T}{T}t$ (and we have $\bar y^T(T)=y_1^T$ as expected). For this example, we expect that, if $T>0$ is large, then $x^T(t)\simeq 0$, $u^T(t)\simeq 0$ and $y^T(t)\simeq\bar y^T(t)$ for $t\in[0,T]$, where the order of this approximation is to be determined, what we do hereafter by explicit calculations.

Note that, now, the solution of the ``turnpike-static" optimal control problem, denoted with upper bars, depends on $T$.

Let us apply the Pontryagin maximum principle to \eqref{simple_example}.
The Hamiltonian is $H = p_x u + p_y x - \frac{1}{2} (x^2+u^2)$ (there is no abnormal minimizer). Then $u^T(t)=p_x^T(t)$ and the extremal system is 
$$
\dot x^T(t) = p_x^T(t),\quad \dot y^T(t)=x^T(t),\quad \dot p_x^T(t)=x^T(t)-p_y^T
$$
with $p_y^T$ constant.
We have then
\begin{align*}
\frac{d}{dt} \begin{pmatrix} x^T(t)-p_y^T \\ p_x^T(t) \end{pmatrix}
&= 
\begin{pmatrix}
0 & 1 \\
1 & 0 
\end{pmatrix}
\begin{pmatrix} x^T(t)-p_y^T \\ p_x^T(t) \end{pmatrix}
\\
\frac{d}{dt} y^T(t) &= x^T(t) 
\end{align*}
In particular, the dynamics in $(x^T-p_y^T,p_x^T)$ is hyperbolic: setting $v^T(t)=x^T(t)-p_y^T+p_x^T(t)$ and $w^T(t)=x^T(t)-p_y^T-p_x^T(t)$, we have
$$
\dot v^T(t) = v^T(t),\quad \dot w^T(t)=-w^T(t)
$$
with terminal conditions $v^T(0)+w^T(0)=2-2p_y^T$, $v^T(T)+w^T(T)=4-2p_y^T$. 
The solutions of this differential system are given by the familiar phase portrait around a saddle point, and we infer that 
$$
\vert v^T(t)\vert+\vert w^T(t)\vert = \mathrm{O}(e^{-t}+e^{-(T-t)})+\mathrm{O}(\vert p_y^T\vert (e^{-t}+e^{-(T-t)}))
$$
for every $t\in[0,T]$, where the $\mathrm{O}(\cdot)$ is uniform with respect to $T$, 
and thus 
$$
\vert x^T(t)-p_y^T\vert+\vert p_x^T(t)\vert =\mathrm{O}(e^{-t}+e^{-(T-t)})+\mathrm{O}(\vert p_y^T\vert (e^{-t}+e^{-(T-t)})) .
$$
In particular, 
$$
x^T(t) = p_y^T + \mathrm{O}(e^{-t}+e^{-(T-t)})+\mathrm{O}(\vert p_y^T\vert (e^{-t}+e^{-(T-t)}))
$$
and, integrating $\dot y^T(t)=x^T(t)$ we obtain $y^T(t) = p_y^T t + \mathrm{O}(1)+\mathrm{O}(\vert p_y^T\vert)$ for every $t\in[0,T]$. Since $y^T(T)=y_1^T$, we obtain that 
$$
p_y^T = \frac{y_1^T}{T} + \mathrm{O}\left( \frac{1}{T}\right)+\mathrm{O}\left( \frac{\vert p_y^T\vert}{T} \right),
$$
and since $y_1^T$ is sublinear at infinity, we conclude that 
$$
p_y^T 
= \bar p_y^T + \mathrm{O}\left( \frac{1}{T}\right) .
$$
Therefore 
$$
x^T(t) = \frac{y_1^T}{T} + \mathrm{O}\left( \frac{1}{T}+e^{-t}+e^{-(T-t)}\right) = \bar x^T + \mathrm{O}\left( \frac{1}{T}+e^{-t}+e^{-(T-t)}\right)
$$
and
$$
y^T(t)=\frac{y_1^T}{T}t+\mathrm{O}(1)=\bar y^T(t)+\mathrm{O}(1)
$$
for every $t\in[0,T]$. In turn, we have also $p_x^T(t) = \mathrm{O}(e^{-t}+e^{-(T-t)})$.

We conclude that there exists $C>0$ (independent of $T$) such that
\begin{equation}\label{conclusion_simple_example}
\boxed{
\begin{split}
& \vert x^T(t)-\bar x^T\vert\leq C\left( \frac{1}{T}+e^{-t}+e^{-(T-t)}\right) \\
& \vert u^T(t)-\bar u^T\vert + \vert p_x^T(t)-\bar p_x^T\vert \leq C\left( e^{-t}+e^{-(T-t)}\right) \\
& \vert y^T(t) - \bar y^T(t)\vert \leq C \\
& \vert p_y^T-\bar p_y^T\vert \leq \frac{C}{T}
\end{split}
}
\end{equation}
for every $t\in[0,T]$ (with $\bar p_x^T=0$ and $\bar p_y^T=\frac{y_1^T}{T}$).
Comparing with Theorem \ref{thm_TZ}, we see on this example that we do not have an exponential turnpike phenomenon for the component $x^T(t)$: the above inequality is weaker; it says that, except around $t=0$ and $t=T$, $x^T(t)$ is bounded by $\frac{1}{T}$ (which is small as $T$ is large), while the component $y^T(t)$ remains in a bounded neighborhood of $\bar y^T(t)=\frac{y_1^T}{T}$.

It is interesting to note that, in this example, if we take $y_1^T=T\sin(T)$ (as mentioned at the beginning) then $\bar x^T=\sin(T)$ and thus $\bar x^T$ oscillates as $T$ varies and has no limit as $T\rightarrow+\infty$. Despite of that, we have the estimates \eqref{conclusion_simple_example}.

In what follows, we are going to generalize the above example.

\subsubsection{Main result}
\paragraph{Pontryagin maximum principle.}
As in Section \ref{sec_reminders}, we assume that there exists a unique optimal solution $(x^T(\cdot),y^T(\cdot),u^T(\cdot))$ of \eqref{syst1}-\eqref{syst2}-\eqref{terminalconditions_12}-\eqref{syst12_Omega}-\eqref{mincost1}.
Compared with \eqref{syst}-\eqref{terminalconditions}-\eqref{syst_Omega}-\eqref{mincost}, the Hamiltonian of \eqref{syst1}-\eqref{syst2}-\eqref{terminalconditions_12}-\eqref{syst12_Omega}-\eqref{mincost1} is now
$$
H(x,p_x,p_y,p^0,u) = \langle p_x,f(x,u)\rangle + \langle p_y , g(x,u)\rangle + p^0 f^0(x,u) .
$$
Note that $H$ does not depend on the variable $y$.
Assuming as well that, for every $T\geq T_0$, the optimal solution has a unique extremal lift, which is not abnormal (so that we take $p^0=-1$), the application of the Pontryagin maximum principle leads to the existence of an absolutely continuous mapping $p_x^T(\cdot):[0,T]\rightarrow\R^n$ and of a constant $p_y^T\in\R^p$ such that
\begin{equation}\label{extremal_syst1}
\begin{split}
\dot x^T(t) &= \frac{\partial H}{\partial p_x}(x^T(t),p_x^T(t),p_y^T,-1,u^T(t)) = f(x^T(t),u^T(t)) \\
\dot y^T(t) &= \frac{\partial H}{\partial p_y}(x^T(t),p_x^T(t),p_y^T,-1,u^T(t)) = g(x^T(t),u^T(t)) \\
\dot p_x^T(t) &= - \frac{\partial H}{\partial x}(x^T(t),p_x^T(t),p_y^T,-1,u^T(t)) \\
&= - \frac{\partial f}{\partial x}(x^T(t),u^T(t))^\top p_x^T(t) - \frac{\partial g}{\partial x}(x^T(t),u^T(t))^\top p_y^T + \frac{\partial f^0}{\partial x}(x^T(t),u^T(t))^\top 
\end{split}
\end{equation}
($p_y^T$ is constant because $\frac{\partial H}{\partial y}=0$) and
\begin{equation}\label{extremal_syst1_max1}
u^T(t) = \underset{v\in\Omega}{\mathrm{argmax}}\ H(x^T(t),p_x^T(t),p_y^T,-1,v)
\end{equation}
for almost every $t\in[0,T]$. 

Note that \eqref{extremal_syst1_max1} gives
\begin{multline}\label{extremal_syst1_max2}
\frac{\partial H}{\partial u}(x^T(t),p_x^T(t),p_y^T,-1,u^T(t)) \\ 
= \frac{\partial f}{\partial u}(x^T(t),u^T(t))^\top p_x^T(t) + \frac{\partial g}{\partial u}(x^T(t),u^T(t))^\top p_y^T - \frac{\partial f^0}{\partial x}(x^T(t),u^T(t))^\top
= 0
\end{multline}
at every Lebesgue time $t\in[0,T]$ at which $u^T(\cdot)$ is approximately continuous and $u^T(t)\in\mathring{\Omega}$.

\paragraph{``Turnpike-static" optimal control problem.}
With respect to \eqref{staticpb}, the ``turnpike-static" optimal control problem (depending on $T$) is now the following one:
\begin{equation}\label{staticpb1}
\begin{split}
\min \Big\{ f^0(x,u) \ \mid\ (x,u)\in\R^n\times\Omega, \quad & f(x,u)=0 , \\
& \dot y(t)=g(x,u),\quad y(0)=y_0,\quad y(T)=y_1^T \Big\}
\end{split}
\end{equation}
where we have added the constraint coming from the dynamics and terminal conditions in $y$.
In contrast to the static optimal control problem \eqref{staticpb}, which does not depend on $T$, here, we have a \emph{family} of optimization problems indexed by $T$. We call it ``turnpike-static" because $(x,u)$ is still searched as an equilibrium point, while the component $y(\cdot)$ is time-varying.

We assume that, for every $T\geq T_0$, there exists a unique optimal solution $(\bar x^T,\bar y^T(\cdot),\bar u^T)$ of \eqref{staticpb1} (depending on $T$), with $u^T\in\mathring{\Omega}$, and that it has a unique extremal (Lagrange multiplier) lift, which is not abnormal. 
Integrating the differential equation in \eqref{staticpb1}, noting that $(x,u)$ is constant, the turnpike-static optimal control problem \eqref{staticpb1} is equivalent to
\begin{equation}\label{staticpb1_wellposed}
\min \left\{ f^0(x,u) \ \mid\ (x,u)\in\R^n\times\Omega, \quad f(x,u)=0 , \quad  g(x,u) = \frac{y_1^T-y_0}{T} \right\}
\end{equation}
which is a classical minimization problem under $n+p$ equality constraints (note that its optimal solution $(\bar x^T,\bar u^T)$ is assumed to be such that $\bar u^T\in\mathring{\Omega}$ for every $T\geq T_0$).
The Lagrange multiplier rule implies (since the optimal solution is moreover assumed not to be abnormal) that $df^0(\bar x^T,\bar u^T)$ can be expressed linearly in function of $df(\bar x^T,\bar u^T)$ and $dg(\bar x^T,\bar u^T)$.
In the next lemma, we write these classical first-order optimality conditions in an equivalent Hamiltonian form, which seems more complicated but is exactly devised to be comparable to \eqref{extremal_syst1}.

\begin{lemma}\label{lem_static1}
There exist $\bar p_x^T\in\R^n$ and $\bar p_y^T\in\R^p$ such that, for every $t\in[0,T]$,
\begin{equation*}
\begin{split}
0 &= \frac{\partial H}{\partial p_x}(\bar x^T,\bar p_x^T,\bar p_y^T,-1,\bar u^T) = f(\bar x^T,\bar u^T)  \\
\dot{\bar y}(t) &= \frac{\partial H}{\partial p_y}(\bar x^T,\bar p_x^T,\bar p_y^T,-1,\bar u^T) = g(\bar x^T,\bar u^T),\qquad \bar y^T(0)=y_0,\quad \bar y^T(T)=y_1^T \\
0 &= -\frac{\partial H}{\partial x}\left( \bar x^T,\bar p_x^T,\bar p_y^T,-1,\bar u^T\right) = -\frac{\partial f}{\partial x}(\bar x^T,\bar u^T)^\top \bar p_x^T - \frac{\partial g}{\partial x}(\bar x^T,\bar u^T)^\top \bar p_y^T + \frac{\partial f^0}{\partial x}(\bar x^T,\bar u^T) \\
0 &= - \frac{\partial H}{\partial y}(\bar x^T,\bar p_x^T,\bar p_y^T,-1,\bar u^T) 
\end{split}
\end{equation*}
and
\begin{equation*}
\frac{\partial H}{\partial u}\left( \bar x^T,\bar p_x^T,\bar p_y^T,-1,\bar u^T\right) \\
= \frac{\partial f}{\partial u}(\bar x^T,\bar u^T)^\top \bar p_x^T + \frac{\partial g}{\partial u}(\bar x^T,\bar u^T)^\top \bar p_y^T - \frac{\partial f^0}{\partial u}(\bar x^T,\bar u^T)  =  0  .
\end{equation*}
\end{lemma}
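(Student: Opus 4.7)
The plan is to recognize Lemma \ref{lem_static1} as nothing more than the classical Lagrange multiplier rule applied to the finite-dimensional equality-constrained problem \eqref{staticpb1_wellposed}, rewritten in Hamiltonian form. The work is essentially bookkeeping: matching the standard KKT conditions to the partial derivatives of $H(x,p_x,p_y,-1,u) = \langle p_x,f(x,u)\rangle + \langle p_y,g(x,u)\rangle - f^0(x,u)$.

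First, I would justify the equivalence between \eqref{staticpb1} and \eqref{staticpb1_wellposed}. Since the candidate $(x,u)$ is searched as a pair of constants and $\dot y = g(x,u)$ must hold with $y(0)=y_0$ and $y(T)=y_1^T$, the unique solution of the Cauchy problem is the affine map $t\mapsto y_0 + t\, g(x,u)$, and the terminal condition $y(T)=y_1^T$ is equivalent to $g(x,u) = (y_1^T-y_0)/T$. Thus \eqref{staticpb1_wellposed} is the correct reduction, and for the optimizer $(\bar x^T,\bar u^T)$ the associated $\bar y^T$ is forced to be $\bar y^T(t) = y_0 + t(y_1^T-y_0)/T$.

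Second, I would invoke the Lagrange multiplier rule at the minimizer $(\bar x^T,\bar u^T)$ of \eqref{staticpb1_wellposed}, which is a $C^2$ minimization problem under $n+p$ equality constraints $f(x,u)=0$ and $g(x,u)=(y_1^T-y_0)/T$; the constraint $u\in\Omega$ is inactive thanks to the assumption $\bar u^T\in\mathring{\Omega}$. By the non-abnormality assumption, the multiplier associated with the cost can be normalized to $-1$, and there exist unique vectors $\bar p_x^T\in\R^n$ and $\bar p_y^T\in\R^p$ such that
\begin{equation*}
\frac{\partial f^0}{\partial x}(\bar x^T,\bar u^T) = \frac{\partial f}{\partial x}(\bar x^T,\bar u^T)^\top \bar p_x^T + \frac{\partial g}{\partial x}(\bar x^T,\bar u^T)^\top \bar p_y^T,
\end{equation*}
\begin{equation*}
\frac{\partial f^0}{\partial u}(\bar x^T,\bar u^T) = \frac{\partial f}{\partial u}(\bar x^T,\bar u^T)^\top \bar p_x^T + \frac{\partial g}{\partial u}(\bar x^T,\bar u^T)^\top \bar p_y^T.
\end{equation*}

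Finally, I would simply read off the Hamiltonian identities: $\partial H/\partial p_x = f$, $\partial H/\partial p_y = g$, and $\partial H/\partial y = 0$ are immediate from the definition of $H$ (which does not depend on $y$), so the first, second, and fourth lines of the lemma follow from the feasibility of $(\bar x^T,\bar u^T,\bar y^T)$ for \eqref{staticpb1}. The third line and the last displayed equation are precisely the two Lagrange stationarity conditions above, rewritten using $\partial H/\partial x = (\partial f/\partial x)^\top p_x + (\partial g/\partial x)^\top p_y - \partial f^0/\partial x$ and the analogous formula in $u$. There is no genuine obstacle: the only subtle point is that the stated existence and (implicit) uniqueness of $\bar p_x^T,\bar p_y^T$ rest on the normality hypothesis imposed on the turnpike-static problem, together with the fact that the Jacobian of the constraint map $(f,g)$ at $(\bar x^T,\bar u^T)$ has full rank $n+p$ (as encoded in the assumption that the extremal lift is unique and normal).
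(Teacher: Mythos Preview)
Your proposal is correct and follows exactly the approach indicated in the paper: the paper simply states that the lemma follows by applying the Lagrange multiplier rule to the optimization problem \eqref{staticpb1_wellposed}, with $\bar p_x^T$ and $\bar p_y^T$ being the Lagrange multipliers associated respectively with the constraints $f(x,u)=0$ and $g(x,u)=(y_1^T-y_0)/T$. Your write-up is in fact more detailed than the paper's one-sentence justification, but the underlying argument is identical.
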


This lemma follows by applying the Lagrange multiplier rule to the optimization problem \eqref{staticpb1_wellposed} (see also Lemma \ref{lem_static_asympt} in Section \ref{sec_proof_turnpike_thm}).
The vector $\bar p_x^T$ is the Lagrange multiplier associated with the constraint $f(x,u)=0$, while the vector $\bar p_y^T$ is the Lagrange multiplier associated with the constraint $g(x,u)=\frac{y_1^T-y_0}{T}$.

We observe that
\begin{equation}\label{ybar}
\bar y^T(t)= y_0 + t g(\bar x^T,\bar u^T) \qquad\forall t\in[0,T]
\end{equation}
and therefore 
\begin{equation}\label{y1T}
y_0 + T g(\bar x^T,\bar u^T)=y_1^T .
\end{equation}
When $g(\bar x^T,\bar u^T)\neq 0$, this is only possible if the norm of $y_1^T-y_0$ is large as $T$ is large. In the proof of the turnpike theorem hereafter, we are going to consider $T$ as a parameter, tending to $+\infty$. This is why we have underlined the dependence of $y_1^T$ with respect to $T$: when $T$ becomes larger, it may be required that $y_1^T-y_0$ becomes larger as well (otherwise, the controllability problem may have no solution).

\paragraph{Linear turnpike theorem.}
We use the same notations as before (note anyway that, now, the matrices $A_1^T$,$\bar A_1^T$, $\bar B_1^T$, $\bar U^T$ and $\bar W^T$ depend on $T$ because $(\bar x^T,\bar u^T)$ depends on $T$), and additionally, we set
$$
\bar A_2^T = \bar H_{p_y x} = \frac{\partial g}{\partial x}(\bar x^T,\bar u^T),\qquad
\bar B_2^T = \bar H_{p_y u} = \frac{\partial g}{\partial u}(\bar x^T,\bar u^T),\qquad A_2^T = \bar A_2^T + \bar B_2^T (\bar U^T)^{-1} \bar H_{ux},
$$
$$
A^T = \begin{pmatrix} A_1^T & 0  \\ A_2^T & 0 \end{pmatrix} , \qquad
\bar A^T = \begin{pmatrix} \bar A_1^T & 0  \\ \bar A_2^T & 0 \end{pmatrix} 
= \begin{pmatrix}
\frac{\partial f}{\partial x}(\bar x^T,\bar u^T) & 0  \\
\frac{\partial g}{\partial x}(\bar x^T,\bar u^T) & 0 
\end{pmatrix}, \qquad
\bar B^T = \begin{pmatrix} \bar B_1^T \\ \bar B_2^T \end{pmatrix} 
= \begin{pmatrix}
\frac{\partial f}{\partial u}(\bar x^T,\bar u^T) \\
\frac{\partial g}{\partial u}(\bar x^T,\bar u^T)
\end{pmatrix} .
$$
Here, $A^T$ and $\bar A^T$ are square matrices of size $n+p$ and $\bar B^T$ is a matrix of size $(n+p)\times m$.
In our main result hereafter, we make all assumptions done in Theorem \ref{thm_TZ} and we adapt them in an obvious way to the present context where the turnpike set is not anymore the singleton $\{\bar x\}$, but is now the path $t\mapsto (\bar x^T,\bar y^T(t))$ that we call the \emph{turnpike trajectory}. For instance, the controllability assumption \ref{H1_cont} now stipulates that there exists a trajectory steering in finite time some point of the initial set $M_0$ to some point of the turnpike trajectory, and that any point of the turnpike trajectory can be steered in finite time to the final set $M_1$.

We say that the family of optimal control problems \eqref{syst1}-\eqref{syst2}-\eqref{syst12_Omega}-\eqref{mincost1}, indexed by $T>0$, is \emph{partially $x$-strictly dissipative} at $(\bar x^T, \bar u^T)$ with respect to the supply rate function \eqref{supply}, if the control system \eqref{syst1}, in the $x$ variable, is strictly dissipative with respect to the supply rate function \eqref{supply}. In other words, here, we do not require any dissipativity with respect to $y$.

\begin{theorem}\label{turnpike_thm}
There exist $\delta>0$, $K>0$ and $T_0>0$ such that, for every $T\geq T_0$:\\
-- Assumptions of global nature:
\begin{enumerate}[label=(\roman*)]
\item\label{H2_unique1} There exists a unique optimal tuple $(x^T(\cdot),y^T(\cdot),p_x^T(\cdot),p_y^T,u^T(\cdot))$ solution of \eqref{syst1}-\eqref{syst2}-\eqref{terminalconditions_12}-\eqref{syst12_Omega}-\eqref{mincost1}-\eqref{extremal_syst1}-\eqref{extremal_syst1_max1}-\eqref{transversality} (assumed to be normal); 
moreover, $\Vert x^T(t)\Vert+\Vert u^T(t)\Vert\leq K$ for every $t\in[0,T]$.
\item\label{H2_unique2} There exists a unique optimal tuple $(\bar x^T,\bar y^T(\cdot),\bar p_x^T,\bar p_y^T,\bar u^T)$ solution of \eqref{staticpb1} (with $\bar y^T(\cdot)$ given by \eqref{ybar}) and of the first-order optimality system stated in Lemma \ref{lem_static1}, assumed to be normal; 
moreover, $\Vert \bar x^T\Vert+\Vert \bar u^T\Vert\leq K$ and $\bar u^T\in\Omega_1$ where $\Omega_1$ is a closed subset of $\mathring{\Omega}$ not depending on $T$.
\item\label{H2_dissip} The family of optimal control problems \eqref{syst1}-\eqref{syst2}-\eqref{syst12_Omega}-\eqref{mincost1}, indexed by $T>0$, is partially $x$-strictly dissipative at $(\bar x^T, \bar u^T)$ with respect to the supply rate function $w(x,u)=f^0(x,u)-f^0(\bar x^T,\bar u^T)$, with a storage function $S^T$ (satisfying \eqref{dissip}, with $\alpha$ not depending on $T$) that is bounded on $M_0$ and $M_1$ uniformly with respect to $T$. 
\item\label{H2_cont} There exists a trajectory $(\tilde x^T(\cdot),\tilde y^T(\cdot),\tilde u^T(\cdot))$ solution of \eqref{syst1}-\eqref{syst2} on $[0,T]$, such that $\Vert\tilde x^T(t)\Vert+\Vert\tilde u^T(t)\Vert\leq K$ for every $t\in[0,T]$ and $\tilde x^T(t)=\bar x^T$ and $\tilde u^T(t)=\bar u^T$ for every $t\in[t_1^T,T-t_2^T]$, for some $t_1^T,t_2^T\in[0,T_0]$.
\end{enumerate}
-- Assumptions of local nature:
\begin{enumerate}[label=(\roman*), start=5]
\item\label{H2_U} $\bar U^T=-\bar H_{uu}=-\frac{\partial^2 H}{\partial u^2}(\bar x^T,\bar p_x^T,\bar p_y^T,-1,\bar u^T)$ is a positive definite symmetric matrix, uniformly with respect to $T$, i.e., $\bar U^T\geq \delta I_m$ for every $T\geq T_0$.
\item\label{H2_W} $\bar W^T = -\bar H_{xx} - \bar H_{xu}(\bar U^T)^{-1}\bar H_{ux}$ is a positive definite symmetric matrix, uniformly with respect to $T$, i.e., $\bar W^T\geq \delta I_n$ for every $T\geq T_0$.
\item\label{H2_Kalman} The pair $(\bar A^T,\bar B^T)$ (equivalently, the pair $(A^T,\bar B^T)$) satisfies the Kalman condition in a uniform way with respect to $T$, i.e., denoting $\bar K^T=(\bar B^T,\bar A^T\bar B^T,\ldots,(\bar A^T)^{n+p-1}\bar B^T)$ the Kalman matrix, there exists $\delta>0$ such that $\det ( K^T (K^T)^\top ) \geq \delta$ for every $T\geq T_0$.
\end{enumerate}
In addition, we assume that there exists $C_1>0$ such that:
\begin{enumerate}[label=(\roman*), start=8]
\item\label{H2_y1T} $\Vert y_1^T\Vert\leq C_1T$ for every $T\geq T_0$.
\item\label{inftyqualified} There exists $C_2>C_1$ such that, for every $\theta\in\R^p$ satisfying $\Vert\theta\Vert\leq C_2$,  the set $\{ (x,u)\in\R^n\times\Omega\ \mid\ f(x,u)=0,\quad g(x,u)=\theta\}$ is a ($C^2$) submanifold of $\R^n\times\R^m$.
\end{enumerate} 
Then, there exist $C>0$ and $\nu>0$ such that if $T\geq T_0$ then
\begin{equation}\label{turnpikelin}
\boxed{
\begin{split}
& \Vert x^T(t)-\bar x^T\Vert + \Vert p_x^T(t)-\bar p_x^T\Vert + \Vert u^T(t)-\bar u^T\Vert \leq C \left( \frac{1}{T} + e^{-\nu t} + e^{-\nu (T-t)} \right) \\
& \Vert y^T(t)-\bar y^T(t)\Vert \leq C , \qquad
\Vert p_y^T-\bar p_y^T\Vert\leq \frac{C}{T} 
\end{split}
}
\end{equation}
for every $t\in[0,T]$.
\end{theorem}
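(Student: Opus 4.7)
The plan is to reduce Theorem \ref{turnpike_thm} to Theorem \ref{thm_TZ} by regarding $p_y^T$ as a parameter. For each $p\in\R^p$, introduce the auxiliary reduced optimal control problem
\begin{equation*}
(\mathcal{R}_p):\quad \min \int_0^T \bigl(f^0(x,u)-\langle p,g(x,u)\rangle\bigr)\,dt
\end{equation*}
under $\dot x=f(x,u)$, $x(0)\in M_0$, $x(T)\in M_1$, $u\in\Omega$, together with its static counterpart
\begin{equation*}
(\mathcal{S}_p):\quad \min\bigl\{f^0(x,u)-\langle p,g(x,u)\rangle\ \mid\ f(x,u)=0,\ u\in\Omega\bigr\}.
\end{equation*}
A direct inspection of the Hamiltonians shows that the Pontryagin extremal system \eqref{extremal_syst1}-\eqref{extremal_syst1_max1} of the full problem, in the variables $(x,p_x,u)$ with $p_y^T$ treated as a constant parameter, is exactly the Pontryagin extremal system of $(\mathcal{R}_{p_y^T})$, and that $(\bar x^T,\bar p_x^T,\bar u^T)$ of Lemma~\ref{lem_static1} is precisely the static extremal of $(\mathcal{S}_{\bar p_y^T})$. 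Let $(\hat x_p,\hat u_p,\hat p_{x,p})$ denote the minimizer of $(\mathcal{S}_p)$ and its Lagrange multiplier.

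First I would establish uniform a priori bounds on the candidates $\{\bar p_y^T:T\geq T_0\}$. Assumptions \ref{H2_y1T} and \ref{inftyqualified} ensure that the constraint set $\{(x,u):f(x,u)=0,\ g(x,u)=(y_1^T-y_0)/T\}$ of the equivalent problem \eqref{staticpb1_wellposed} is a $C^2$ submanifold, smoothly varying in the parameter $(y_1^T-y_0)/T$ lying in a fixed compact set; normality and the uniform qualification encoded in \ref{H2_U}-\ref{H2_W}-\ref{H2_Kalman} then give a uniform bound $\|\bar x^T\|+\|\bar u^T\|+\|\bar p_x^T\|+\|\bar p_y^T\|\leq K'$, so that the $\bar p_y^T$ lie in a fixed compact set $\mathcal{P}\subset\R^p$. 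The same uniform qualification, combined with the normal Pontryagin extremal condition \eqref{extremal_syst1_max2}, provides an a priori bound on $p_y^T$ itself, so that $p_y^T$ can be assumed to lie in a fixed compact neighborhood $\mathcal{P}'$ of $\mathcal{P}$.

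Next I would apply Theorem~\ref{thm_TZ} to the family $(\mathcal{R}_p)$ as $p$ ranges over $\mathcal{P}'$. By the implicit function theorem applied to the first-order conditions of $(\mathcal{S}_p)$, the map $p\mapsto(\hat x_p,\hat u_p,\hat p_{x,p})$ is of class $C^1$, and the uniform local conditions \ref{H2_U}-\ref{H2_W}-\ref{H2_Kalman} at $(\bar x^T,\bar u^T)$ propagate, by continuity in $(p,T)$, to the corresponding quantities at $(\hat x_p,\hat u_p)$ uniformly for $p\in\mathcal{P}'$ and $T\geq T_0$. The partial $x$-dissipativity of assumption~\ref{H2_dissip} is preserved by the linear-in-$g$ modification of the cost (the storage function being modified by a linear term), and \ref{H2_cont} supplies the controllability required by Theorem~\ref{thm_TZ}. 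One therefore obtains constants $C,\nu>0$, independent of $T$ and of $p\in\mathcal{P}'$, such that, taking $p=p_y^T$,
\begin{equation*}
\|x^T(t)-\hat x_{p_y^T}\|+\|p_x^T(t)-\hat p_{x,p_y^T}\|+\|u^T(t)-\hat u_{p_y^T}\|\leq C\bigl(e^{-\nu t}+e^{-\nu(T-t)}\bigr)\quad\forall t\in[0,T].
\end{equation*}

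The final step is a bootstrap identifying $p_y^T$ with $\bar p_y^T$ up to order $1/T$. Integrating $\dot y^T(t)=g(x^T(t),u^T(t))$ on $[0,T]$, using smoothness of $g$ and the turnpike bound above (whose $L^1$-norm is $\mathrm{O}(1)$ uniformly in $T$), one gets $y_1^T-y_0=T\,g(\hat x_{p_y^T},\hat u_{p_y^T})+\mathrm{O}(1)$; dividing by $T$ and comparing with \eqref{y1T} yields $g(\hat x_{p_y^T},\hat u_{p_y^T})=g(\bar x^T,\bar u^T)+\mathrm{O}(1/T)$. The map $p\mapsto g(\hat x_p,\hat u_p)$ is $C^1$ and its Jacobian at $p=\bar p_y^T$ is uniformly invertible in $T$: this is a standard computation showing that this Jacobian is a Schur complement built from $\bar U^T$, $\bar W^T$, $\bar A^T$ and $\bar B^T$, whose invertibility follows from \ref{H2_U}-\ref{H2_W}-\ref{H2_Kalman}. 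The implicit function theorem then yields $\|p_y^T-\bar p_y^T\|\leq C/T$, and smooth dependence transfers this bound to $\|\hat x_{p_y^T}-\bar x^T\|+\|\hat u_{p_y^T}-\bar u^T\|+\|\hat p_{x,p_y^T}-\bar p_x^T\|\leq C/T$. Combined with the exponential turnpike estimate above, this proves the first line of \eqref{turnpikelin} and the bound on $p_y^T-\bar p_y^T$. The estimate $\|y^T(t)-\bar y^T(t)\|\leq C$ follows by writing $y^T(t)-\bar y^T(t)=\int_0^t(g(x^T,u^T)-g(\bar x^T,\bar u^T))\,d\tau$ and splitting the integrand into its $\mathrm{O}(1/T)$ bulk contribution and exponentially small boundary layers.

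The main obstacle is the apparent circularity: the turnpike estimate is established via Theorem~\ref{thm_TZ} applied to $(\mathcal{R}_{p_y^T})$, but $p_y^T$ itself is only controlled a posteriori. The resolution is to prove \emph{uniformity in $p$} of all constants appearing in Theorem~\ref{thm_TZ} over the fixed compact neighborhood $\mathcal{P}'$, using the smooth parametric dependence of $(\mathcal{S}_p)$ and the uniform-in-$T$ character of \ref{H2_U}-\ref{H2_W}-\ref{H2_Kalman}-\ref{inftyqualified}. A delicate secondary point is the propagation of the dissipativity assumption~\ref{H2_dissip} to $(\mathcal{R}_p)$ while preserving uniform-in-$p$ control on the storage function on $M_0$ and $M_1$, which is where the boundedness requirement in \ref{H2_dissip} and the uniform boundedness of $p\in\mathcal{P}'$ come into play.
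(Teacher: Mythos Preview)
Your parametrize-by-$p_y$ strategy is natural and genuinely different from the paper's direct analysis of the full extremal system, and one piece of it is right: the Jacobian of $p\mapsto g(\hat x_p,\hat u_p)$ that you need to invert is essentially the matrix $R$ of Lemma~\ref{lem_R}, whose positive definiteness the paper proves by a Schur-complement argument of exactly the kind you describe. However, the two ``delicate'' points you flag are real gaps, not merely technicalities.

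First, the claimed transfer of dissipativity to $(\mathcal{R}_p)$ by ``modifying the storage function by a linear term'' does not work. The extra cost term $\int_{t_0}^{t_1}\langle p,g(x,u)\rangle\,dt=\langle p,y(t_1)-y(t_0)\rangle$ is a boundary term in $y$, but $y$ is \emph{not} a state of $(\mathcal{R}_p)$; there is in general no function $\Phi(x)$ with $\nabla\Phi(x)\cdot f(x,u)=\langle p,g(x,u)\rangle$ for all $u$, so this term cannot be absorbed into a storage function of $x$ alone. Worse, Assumption~\ref{H2_dissip} gives strict dissipativity at $(\bar x^T,\bar u^T)$, with $\alpha$ measuring the distance to \emph{that} point, whereas Theorem~\ref{thm_TZ} for $(\mathcal{R}_{p_y^T})$ requires dissipativity at $(\hat x_{p_y^T},\hat u_{p_y^T})$. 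These coincide only when $p_y^T=\bar p_y^T$, which is precisely what you are trying to prove: the circularity is not broken. The paper avoids this by never modifying the cost; it uses the partial $x$-dissipativity at $(\bar x^T,\bar u^T)$ only to force $x^{T_k}(\cdot)$ (and then, via Proposition~\ref{prop_sensitivity_dyn}, also $p_x^{T_k}(\cdot)$) into a fixed neighborhood of the limit turnpike along a subsequence (Lemmas~\ref{lem_strict1}--\ref{lem_tau0tau1}), and then bootstraps the local linear-turnpike estimates of Lemma~\ref{lem_local_turnpike}, which come from hyperbolicity of the $(x,p_x)$-block together with the invertibility of $R$.

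Second, the a priori bound on $p_y^T$ is not justified. Equation~\eqref{extremal_syst1_max2} couples $p_y^T$ to $p_x^T(t)$, and Assumption~\ref{H2_unique1} gives no bound on $p_x^T(\cdot)$; the paper explicitly notes that boundedness of the dynamic adjoint is \emph{not} assumed and is obtained only a posteriori, through Proposition~\ref{prop_sensitivity_dyn} and the bootstrap of Section~\ref{sec_exploit_dissip}. Without a compact $\mathcal{P}'$ containing all $p_y^T$, your uniformity-in-$p$ application of Theorem~\ref{thm_TZ} cannot be set up. Note also that $(x^T,p_x^T,u^T)$ is only an \emph{extremal} of $(\mathcal{R}_{p_y^T})$, not a priori its unique optimal solution, so even with $p_y^T$ bounded you would still need either optimality (which you have not shown) or a direct local-extremal version of Theorem~\ref{thm_TZ} combined with an entering-time argument---which is essentially what the paper does for the full system.
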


The first estimate in \eqref{turnpikelin} states that, except around $t=0$ and $t=T$, the discrepancies $x^T(t)-\bar x^T$, $p_x^T(t)-\bar p_x^T$ and $u^T(t)-\bar u^T$ are bounded above by $\frac{1}{T}$, which is small as $T$ is large, but not exponentially small: it is weaker than \eqref{turnpikeexp}, and we speak of a \emph{linear turnpike} estimate.
The second estimate in \eqref{turnpikelin} gives a bound (uniform with respect to $T$) on the discrepancy $y^T(t)-\bar y^T(t)$ along the whole interval $[0,T]$: it says that $y^T(t)$ remains at a uniform (wrt $T$) distance of $\bar y^T(t) = y_0+t g(\bar x^T,\bar u^T)$ as $t\in[0,T]$.
Finally, the third estimate in \eqref{turnpikelin} says that $p_y^T-\bar p_y^T$ is (linearly) small as $T$ is large.

On specific examples, it may happen that some of the components of the extremal triple $(x^T(\cdot),p_x^T(\cdot),u^T(\cdot))$ enjoy the exponential turnpike property, i.e., an estimate that is stronger than \eqref{turnpikelin}, without the term $\frac{1}{T}$ at the right-hand side. This is the case in the example \eqref{simple_example}: one can see in \eqref{conclusion_simple_example} that the component $x^T$ satisfies the linear turnpike estimate \eqref{turnpikelin}, while the components $p_x^T$ and $u^T$ satisfy the stronger exponential turnpike estimate \eqref{turnpikeexp}. 

\begin{remark}
Under \ref{H2_U}-\ref{H2_W}-\ref{H2_Kalman}-\ref{H2_y1T}, 
Assumptions \ref{H2_unique1}-\ref{H2_unique2}-\ref{H2_dissip}-\ref{H2_cont} and \ref{inftyqualified} are automatically satisfied in the linear-quadratic (LQ) case
\begin{equation}\label{optcont1_LQ}
\begin{split}
& \dot x(t) = \bar A_1^Tx(t)+\bar B_1^Tu(t), \qquad\qquad  x(0)\in M_0, \quad x(T)\in M_1 \\
& \dot y(t) = \bar A_2^Tx(t)+\bar B_2^Tu(t), \qquad\qquad  y(0)=y_0, \quad\ y(T)=y_1^T \\
& \min \int_0^T \left( (x(t)-x_d)^\top \bar W^T (x(t)-x_d)+(u(t)-u_d)^\top \bar U^T (u(t)-u_d)\right) dt 
\end{split}
\end{equation}
for some $x_d\in\R^n$ and $u_d\in\R^m$, with $\Omega=\R^m$.
Actually, \ref{inftyqualified} follows from \ref{H2_Kalman}, because, using the Hautus test, \ref{H2_Kalman} implies that the $(n+p)\times(n+m)$ matrix $(\bar A^T \  \bar B^T )$ is surjective.
\end{remark}

\begin{remark}
As said earlier, to simplify the exposition we have assumed that, for every $T\geq T_0$, the optimal control problem \eqref{syst1}-\eqref{syst2}-\eqref{terminalconditions_12}-\eqref{syst12_Omega}-\eqref{mincost1} has a unique solution, whose extremal lift is unique and normal, and that the static optimal control problem \eqref{staticpb1_wellposed} has a unique solution, whose Lagrange multiplier is unique and normal. When the optimal solution is not unique, or when the extremal lift (still assumed to be normal) is not unique, the statement of Theorem \ref{turnpike_thm} must be adapted as in \cite{TZ}: we obtain in this case a result that is satisfied \emph{locally} around the extremal triple.
\end{remark}

\begin{remark}
As in Remark \ref{rem_state_constraints}, we can add state constraints in the optimal control problem \eqref{syst1}-\eqref{syst2}-\eqref{terminalconditions_12}-\eqref{syst12_Omega}-\eqref{mincost1}. This does not change the statement of Theorem \ref{turnpike_thm}, provided that, in Assumption \ref{H2_unique2}, it is also assumed that $(\bar x^T,\bar y^T(\cdot))$ lies in a closed subset of $\mathring{M}$ not depending on $T$.
\end{remark}

\begin{remark}
Theorem \ref{thm_TZ} (and its proof) is a particular case of Theorem \ref{turnpike_thm}, by ignoring the variables $y$ and $p_y$ (i.e., formally, $p=0$), and the term $\frac{1}{T}$ in the turnpike estimate \eqref{turnpikelin}.
\end{remark}

Theorem \ref{turnpike_thm} is proved in Section \ref{sec_proof_turnpike_thm}.
In Section \ref{sec_examples} we give several optimal control problems that illustrate our main result and we provide numerical simulations showing evidence of the linear turnpike phenomenon.

\section{Proof of Theorem \ref{turnpike_thm}}\label{sec_proof_turnpike_thm}
The proof of Theorem \ref{turnpike_thm} is quite long and involved in some parts. Let us describe shortly the strategy.

First, we prove in Section \ref{sec_prelim_static}, Lemma \ref{lem_static_asympt}, that the family $(\bar x^T,\frac{y_1^T}{T},\bar u^T,\bar p_x^T,\bar p_y^T)_{T\geq T_0}$, solution of the turnpike-static optimal control problem \eqref{staticpb1}, is of class $C^1$ and remains bounded as $T\rightarrow+\infty$. Then, throughout the section, we consider an arbitrary closure point $(\bar x^\infty,\bar d^\infty,\bar u^\infty,\bar p_x^\infty,\bar p_y^\infty)$ of that bounded family and an arbitrary sequence $T_k\rightarrow+\infty$ of positive real numbers such that $(\bar x^{T_k},\frac{y_1^{T_k}}{T_k},\bar u^{T_k},\bar p_x^{T_k},\bar p_y^{T_k})$ converges to $(\bar x^\infty,\bar d^\infty,\bar u^\infty,\bar p_x^\infty,\bar p_y^\infty)$ as $k\rightarrow+\infty$.

In Section \ref{sec_auxiliary}, we study the auxiliary optimal control problem $\mathcal{P}(t_0,t_1,x_0,x_1,\eta_0,\eta_1)$, consisting of the same control system and cost functional as in \eqref{syst1}-\eqref{syst2}-\eqref{terminalconditions_12}-\eqref{syst12_Omega}-\eqref{mincost1}, but on a fixed time interval $[t_0,t_1]$ and with fixed terminal conditions $x(t_0)=x_0$, $y(t_0)=t_0\eta_0$, $x(t_1)=x_1$, $y(t_1)=t_1\eta_1$. Noting in Lemma \ref{leminftydyn} that the trivial trajectory $t\mapsto (x(t),y(t),u(t))=(\bar x^\infty,t\bar d^\infty,\bar u^\infty)$ is an optimal solution of $\mathcal{P}(t_0,t_1,\bar x^\infty,\bar x^\infty,\bar d^\infty,\bar d^\infty)$, with normal extremal lift $(\bar x^\infty,t\bar d^\infty,\bar p_x^\infty,\bar p_y^\infty,-1,\bar u^\infty)$, we then establish in Proposition \ref{prop_sensitivity_dyn} (the main result of the section) that, for any $x_0,x_1$ near $\bar x^\infty$ and for any $\eta_0,\eta_1$ near $\bar d^\infty$, the optimal control problem $\mathcal{P}(t_0,t_1,x_0,x_1,\eta_0,\eta_1)$ has a unique locally optimal solution, with a normal extremal lift $(x(\cdot),y(\cdot),p_x(\cdot),p_y,-1,u(\cdot))$ that is close to $(\bar x^\infty,t\bar d^\infty,\bar p_x^\infty,\bar p_y^\infty,-1,\bar u^\infty)$, and moreover their discrepancy satisfies linear turnpike estimates that prefigure the estimates \eqref{turnpikelin} to be established in Theorem \ref{turnpike_thm}. In some sense, in that section, we pass from static to dynamic.

The proof of Proposition \ref{prop_sensitivity_dyn} itself is quite long and is decomposed in several steps. Since its strategy is described in Section \ref{sec_auxiliary}, we do not reproduce it here. We just mention that the proof relies on sensitivity analysis and conjugate point theory (Section \ref{sec-sentiv}), on the analysis of a dynamical system near an hyperbolic singular point (Section \ref{sec_localturn}), and on the careful analysis of the shooting problem. 
The most delicate point is to obtain a neighborhood that does not depend on $t_0,t_1$ for $t_1-t_0$ large enough: this is done in Section \ref{sec-unifneighb} by solving the shooting problem while exploiting and bootstrapping the estimates obtained until that step.

In Section \ref{sec_exploit_dissip}, we show how to pass from local to global by exploiting the dissipativity property. We first show in Lemma \ref{lem_strict1} that the optimal trajectory $t\mapsto (x^{T_k}(t),\frac{y^{T_k}(t)}{\max(t,1)})$, starting at $t=0$ (resp., starting back in time at $t=T_k$), enters a neighborhood of $(\bar x^\infty,\bar d^\infty)$ within a time that converges to $+\infty$ as $k\rightarrow+\infty$ but can be chosen as a $\mathrm{o}(T_k)$. Thanks to Proposition \ref{prop_sensitivity_dyn} and by local uniqueness, we first derive some estimates that can be bootstrapped to finally show that there exists a finite entering time. The proof of Theorem \ref{turnpike_thm} can then be concluded.

\subsection{Preliminaries on the turnpike-static optimal control problem}\label{sec_prelim_static}
We consider the turnpike-static optimal control problem \eqref{staticpb1}, written in the form of the optimization problem \eqref{staticpb1_wellposed}. 
Recall that, by Assumption \ref{H2_unique2}, there exists a unique minimizer $(\bar x^T,\bar u^T)$ of \eqref{staticpb1_wellposed} and that $\bar p_x^T\in\R^n$ and $\bar p_y^T\in\R^p$ are the (normal) Lagrange multipliers respectively associated to the two constraints (see Lemma \ref{lem_static1}). 
For every $T\geq T_0$, we have (with $df = \begin{pmatrix} \frac{\partial f}{\partial x} & \frac{\partial f}{\partial u} \end{pmatrix}$)
\begin{equation}\label{lagbarT}
\begin{split}
& df(\bar x^T,\bar u^T)^\top\bar p_x^T + dg(\bar x^T,\bar u^T)^\top\bar p_y^T  - df^0(\bar x^T,\bar u^T)^\top = 0 , \\
& f(\bar x^T,\bar u^T) =0, \qquad g(\bar x^T,\bar u^T) = \frac{y_1^T - y_0}{T} .
\end{split}
\end{equation}

\begin{lemma}\label{lem_static_asympt}
For every $T\geq T_0$,
the mapping $T\mapsto(\bar x^T,\bar u^T,\bar p_x^T,\bar p_y^T)$ is of class $C^1$ and
\begin{equation}\label{lem_static_asympt_grandO}
(\bar x^T,\bar u^T,\bar p_x^T,\bar p_y^T) = \mathrm{O}(1)
\end{equation}
as $T\rightarrow +\infty$.
\end{lemma}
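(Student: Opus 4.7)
The plan is to regard all quantities as depending on the single reduced parameter $\theta^T := (y_1^T-y_0)/T\in\R^p$, in terms of which the turnpike-static problem \eqref{staticpb1_wellposed} becomes the $T$-independent family
\[
\min\{f^0(x,u) \ \mid\ (x,u)\in\R^n\times\Omega,\ f(x,u)=0,\ g(x,u)=\theta\}.
\]
By Assumption \ref{H2_y1T}, $\|\theta^T\| \leq C_1 + \|y_0\|/T_0$ for every $T\geq T_0$, so $\theta^T$ lies in a compact subset of the open ball $\{\|\theta\|<C_2\}$ on which, by Assumption \ref{inftyqualified}, the constraint set is a $C^2$ submanifold, i.e.\ the map $(f,g)$ is submersive at every feasible point.

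For the $\mathrm{O}(1)$ bound, I would first note that Assumption \ref{H2_unique2} already yields $\|\bar x^T\|+\|\bar u^T\|\leq K$, handling the primal variables. I would then rewrite the Lagrange identity in \eqref{lagbarT} as
\[
M^T \begin{pmatrix} \bar p_x^T \\ \bar p_y^T \end{pmatrix} = df^0(\bar x^T,\bar u^T)^\top,\qquad M^T := d(f,g)(\bar x^T,\bar u^T)^\top\in\R^{(n+m)\times(n+p)},
\]
whose right-hand side is uniformly bounded by $C^2$-smoothness of $f^0$ and boundedness of $(\bar x^T,\bar u^T)$. The crucial step, which I expect to be the main technical point, is a uniform-in-$T$ lower bound on $\sigma_{\min}(M^T)$: on the compact set
\[
\mathcal{K} := \{(x,u,\theta)\in\R^{n+m+p}\ \mid\ \|x\|+\|u\|\leq K,\ u\in\Omega_1,\ \|\theta\|\leq C_2,\ f(x,u)=0,\ g(x,u)=\theta\},
\]
which contains $(\bar x^T,\bar u^T,\theta^T)$ for every $T\geq T_0$, the function $(x,u)\mapsto\sigma_{\min}(d(f,g)(x,u))$ is continuous and strictly positive by Assumption \ref{inftyqualified}, hence admits a uniform positive lower bound. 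Normality of the multipliers (injectivity of $M^T$) then upgrades this into $\|(\bar p_x^T,\bar p_y^T)\|=\mathrm{O}(1)$.

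For the $C^1$ regularity, I would apply the implicit function theorem to the KKT system
\[
\Phi(x,u,p_x,p_y;\theta) := \begin{pmatrix} df(x,u)^\top p_x + dg(x,u)^\top p_y - df^0(x,u)^\top \\ f(x,u) \\ g(x,u)-\theta \end{pmatrix} = 0
\]
around the reference point $(\bar x^T,\bar u^T,\bar p_x^T,\bar p_y^T;\theta^T)$. The Jacobian of $\Phi$ in $(x,u,p_x,p_y)$ has the saddle-point block form
\[
J = \begin{pmatrix} -\bar H_{(x,u)(x,u)} & -d(f,g)^\top \\ d(f,g) & 0 \end{pmatrix}.
\]
Assumptions \ref{H2_U}-\ref{H2_W} together force $-\bar H_{(x,u)(x,u)}$ to be positive definite on $\R^{n+m}$, since $\bar W^T$ is exactly the Schur complement with respect to the positive definite block $\bar U^T$; combined with the uniform full row rank of $d(f,g)$ on $\mathcal{K}$, this is the standard sufficient condition for invertibility of $J$. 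The implicit function theorem then furnishes a local $C^1$ branch $\theta\mapsto (\bar x(\theta),\bar u(\theta),\bar p_x(\theta),\bar p_y(\theta))$ through each $\theta^T$, which by the uniqueness assumed in \ref{H2_unique2} must coincide with the given family, giving $C^1$-ness in $\theta$; composing with the smooth dependence $T\mapsto \theta^T$ yields the $C^1$ dependence on $T$.

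The main obstacle is clearly the uniform positivity of $\sigma_{\min}(M^T)$: it crucially relies on $\bar u^T$ staying inside the closed subset $\Omega_1\subset\mathring{\Omega}$ furnished by \ref{H2_unique2}, so that the minimizer cannot drift to the boundary of $\Omega$ where the submersion hypothesis \ref{inftyqualified} would not apply. Without this uniform interiority, the compactness argument underpinning the whole scheme could not be closed.
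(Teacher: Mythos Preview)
Your proof is correct and structurally parallel to the paper's: both apply the implicit function theorem to the KKT system, showing invertibility of the saddle-point Jacobian via positive definiteness of $-\bar H_{(x,u)(x,u)}$ (Schur complement from \ref{H2_U}--\ref{H2_W}) together with surjectivity of $d(f,g)$. Two minor but genuine differences are worth noting. First, for the surjectivity of $d(f,g)$ the paper invokes Assumption \ref{H2_Kalman} (Kalman condition on $(\bar A^T,\bar B^T)$, via the Hautus test at $\lambda=0$), whereas you invoke Assumption \ref{inftyqualified} (the submanifold hypothesis); both yield full row rank of $d(f,g)$ at $(\bar x^T,\bar u^T)$, so this is a matter of which hypothesis one prefers to lean on. Second, for the boundedness of the multipliers, the paper argues by contradiction---assuming $\|\bar p_x^{T_k}\|+\|\bar p_y^{T_k}\|\to\infty$, normalizing, and passing to a limit to produce a nontrivial abnormal multiplier at $(\bar x^\infty,\bar u^\infty)$, contradicting the qualification in \ref{inftyqualified}---while you argue directly via a uniform lower bound on $\sigma_{\min}(d(f,g)^\top)$ over the compact set $\mathcal{K}$. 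Your direct route is slightly more quantitative (it would in principle give an explicit constant), while the paper's normalization argument is the more classical optimization-theoretic manoeuvre; they are two sides of the same compactness coin.
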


\begin{proof}
Let us first prove the $C^1$ regularity property.
The optimality system \eqref{lagbarT} is written as
$$
F(\bar x^T,\bar u^T,\bar p_x^T,\bar p_y^T) = \begin{pmatrix}
df(\bar x^T,\bar u^T)^\top\bar p_x^T + dg(\bar x^T,\bar u^T)^\top\bar p_y^T  - df^0(\bar x^T,\bar u^T)^\top \\
f(\bar x^T,\bar u^T) \\
g(\bar x^T,\bar u^T) 
\end{pmatrix}
= \begin{pmatrix}
0 \\ 0 \\ \frac{y_1^T - y_0}{T}
\end{pmatrix}
$$
which is a system of $2n+m+p$ equations with the $2n+m+p$ unknowns $(x,u,p_x,p_y)$, where $F:\R^{2n+m+p}\rightarrow\R^{2n+m+p}$ is a $C^1$ mapping. The Jacobian of $F$ at the point $(\bar x^T,\bar u^T,\bar p_x^T,\bar p_y^T)$, which is the usual sensitivity matrix in optimization, is
$$
dF(\bar x^T,\bar u^T,\bar p_x^T,\bar p_{y_1}^T,\bar p_{y_2}^T)
= \left( \begin{array}{cc|cc}
\bar H_{xx} & \bar H_{xu} & \bar H_{xp_x} & \bar H_{xp_y} \\
\bar H_{ux} & \bar H_{uu} & \bar H_{up_x} & \bar H_{up_y} \\ \hline
\bar H_{p_xx} & \bar H_{p_xu} & 0 & 0 \\
\bar H_{p_yx} & \bar H_{p_yu} & 0 & 0 \\
\end{array} \right)
= \left( \begin{array}{c|c}
E_1 & E_2^\top \\ \hline
E_2 & 0 \\
\end{array} \right) .
$$
Noting that 
$$
E_2 = \begin{pmatrix}
\frac{\partial f}{\partial x}(\bar x^T,\bar u^T) & \frac{\partial f}{\partial u}(\bar x^T,\bar u^T) \\[1mm]
\frac{\partial g}{\partial x}(\bar x^T,\bar u^T) & \frac{\partial g}{\partial u}(\bar x^T,\bar u^T)
\end{pmatrix} = \begin{pmatrix}
\bar A^T & \bar B^T
\end{pmatrix} ,
$$
Using the Hautus test, Assumption \ref{H2_Kalman}, which says in particular that the pair $(\bar A^T,\bar B^T)$ satisfies the Kalman assumption, implies that the $(n+p)\times(n+p+m)$ matrix $E_2$ is surjective. 

Besides, by the Schur complement lemma (see, e.g., \cite[Appendix A.5.5]{BoydVandenberghe}), the symmetric matrix $E_1$ is negative definite, because by Assumption \ref{H2_U} the matrix $\bar H_{uu}=-\bar U^T$ is negative definite and by Assumption \ref{H2_W} the matrix $\bar H_{xx}-\bar H_{xu} \bar H_{uu}^{-1} \bar H_{ux}=-\bar W^T$ (Schur complement) is negative definite.

Now, since $E_1$ is symmetric negative definite and $E_2$ is surjective, we conclude that the sensitivity matrix $dF(\bar x^T,\bar u^T,\bar p_x^T,\bar p_y^T)$ is invertible.\footnote{Indeed, let $(Y_1,Y_2)\in\R^{n+m}\times\R^{n+p}$ be such that $dF.\begin{pmatrix}Y_1\\Y_2\end{pmatrix}=\begin{pmatrix}0\\0\end{pmatrix}$, i.e., $E_1Y_1+E_2^\top Y_2=0$ and $E_2Y_1=0$. Then $0 = Y_1^\top E_1 Y_1+Y_1^\top E_2^\top Y_2 = Y_1^\top E_1 Y_1$ because $Y_1^\top E_2^\top=0$, and thus $Y_1=0$ because $E_1$ is negative definite. Then, $E_2^\top Y_2=0$ implies $Y_2=0$ because $E_2^\top$ is injective.}
By Assumption \ref{H2_y1T}, we have $\frac{y_1^T - y_0}{T}   = \mathrm{O}(1)$ as $T\rightarrow+\infty$. Hence, we have to solve the nonlinear system of equations $F(\bar x^T,\bar u^T,\bar p_x^T,\bar p_y^T) = \mathrm{O}(1)$. The $C^1$ regularity property follows by applying the implicit function theorem.

\medskip

Let us now establish \eqref{lem_static_asympt_grandO}.
We already know that $(\bar x^T,\bar u^T) = \mathrm{O}(1)$ as $T\rightarrow +\infty$ by the $K$-boundedness property in Assumption \ref{H2_unique2}. Hence, it remains to prove that $\bar p_x^T$ and $\bar p_y^T$ are uniformly bounded as $T\rightarrow+\infty$.
By contradiction, let us assume that there exists a sequence $T_k\rightarrow+\infty$ such that $\Vert\bar p_x^{T_k}\Vert+\Vert\bar p_y^{T_k}\Vert\rightarrow+\infty$ as $k\rightarrow+\infty$.

First, let us show how to pass to the limit in \eqref{lagbarT}.
Since the sequences $\bar x^{T_k}$, $\bar u^{T_k}$, $\bar p_x^{T_k}/(\Vert\bar p_x^{T_k}\Vert+\Vert\bar p_y^{T_k}\Vert)$, $\bar p_y^{T_k}/(\Vert\bar p_x^{T_k}\Vert+\Vert\bar p_y^{T_k}\Vert)$ and $y_1^T/T$ are bounded (the latter, by  Assumption \ref{H2_y1T}), taking a subsequence if necessary, we have
$$
\bar x^{T_k}\rightarrow\bar x^\infty, \quad
\bar u^{T_k}\rightarrow\bar u^\infty, \quad
\frac{\bar p_x^{T_k}}{\Vert\bar p_x^{T_k}\Vert+\Vert\bar p_y^{T_k}\Vert} \rightarrow \psi_x^\infty, \quad
\frac{\bar p_y^{T_k}}{\Vert\bar p_x^{T_k}\Vert+\Vert\bar p_y^{T_k}\Vert} \rightarrow \psi_y^\infty, \quad
\frac{y_1^{T_k}}{T_k}\rightarrow \bar d^\infty
$$ 
as $k\rightarrow+\infty$, for some $\bar x^\infty\in\R^n$, $\bar u^\infty\in\R^m$, $(\psi_x^\infty,\psi_y^\infty)\in\R^n\times\R^n\setminus\{(0,0)\}$ (the nontriviality of the latter being crucial in the reasoning) and $\bar d^\infty\in\R^p$ satisfying $\Vert \bar d^\infty\Vert\leq C_1$. Moreover, by Assumption \ref{H2_unique2}, $\bar u^{T_k}\in\Omega_1$ where $\Omega_1\subset\mathring{\Omega}$ is closed, hence $\bar u^\infty\in\Omega_1$ and thus $\bar u^\infty\in\mathring{\Omega}$. 
Considering \eqref{lagbarT} with $T=T_k$ and dividing the first line by $\Vert\bar p_x^{T_k}\Vert+\Vert\bar p_y^{T_k}\Vert$, which converges to $+\infty$, and then taking the limit $k\rightarrow+\infty$ yields
\begin{equation}\label{lagpsiinfty}
\begin{split}
& df(\bar x^\infty,\bar u^\infty)^\top\psi_x^\infty + dg(\bar x^\infty,\bar u^\infty)^\top\psi_y^\infty   = 0 , \\
& f(\bar x^\infty,\bar u^\infty) =0, \qquad g(\bar x^\infty,\bar u^\infty) = \bar d^\infty .
\end{split}
\end{equation}

Second, let us prove that $(\bar x^\infty,\bar u^\infty)$ is an optimal solution of the optimization problem 
\begin{equation}\label{staticinfty}
\min \left\{ f^0(x,u) \ \mid\ (x,u)\in\R^n\times\Omega, \quad f(x,u)=0 , \quad  g(x,u) = \bar d^\infty \right\} .
\end{equation}
By definition, we have
$$
f^0(\bar x^{T_k},\bar u^{T_k}) \leq f^0(x,u)\qquad \forall (x,u)\ \mid\ f(x,u)=0\quad\textrm{and}\quad g(x,u)=\frac{y_1^{T_k}-y_0}{T_k}\qquad \forall k\in\N .
$$
The fact follows by passing to the limit and by using the fact that, by Assumption \ref{inftyqualified}, the set $\{ (x,u)\in\R^n\times\Omega\ \mid\ f(x,u)=0,\quad g(x,u)=\bar d^\infty\}$ is a submanifold of $\R^n\times\R^m$: indeed, the latter assumption entails the fact that, at every point of that set, the mapping $(df(x,u),dg(x,u))$ is surjective, which allows us to use the implicit function theorem (more precisely, the tubular neighborhood theorem). 

Let us now conclude by raising a contradiction. By Assumption \ref{inftyqualified}, the optimization problem \eqref{staticinfty} is qualified and has no abnormal minimizer. But we have proved that $(\bar x^\infty,\bar u^\infty)$ is a minimizer of \eqref{staticinfty}, which has a nontrivial abnormal lift by \eqref{lagpsiinfty} (recall that $(\psi_x^\infty,\psi_y^\infty)\neq (0,0)$). This is a contradiction.
\end{proof}

\begin{remark}
In the LQ case \eqref{optcont1_LQ}, the above proof is simpler and does not require to use the implicit function theorem. 
\end{remark}

\begin{remark}
As noticed in the above proof, by Assumption \ref{H2_y1T}, the family $\big(\frac{y_1^T}{T}\big)_{T\geq T_0}$ is bounded. Therefore, by Lemma \ref{lem_static_asympt}, the family $(\bar x^T,\frac{y_1^T}{T},\bar u^T,\bar p_x^T,\bar p_y^T)_{T\geq T_0}$ is bounded. It may however fail to have a limit as $T\rightarrow+\infty$. For instance, in the motivating example \eqref{simple_example}, if we choose $y_1^T=T\sin(T)$ then we have $\bar x^T=\bar p_y^T=\sin(T)$, which do not have any limit as $T\rightarrow+\infty$. 

Hereafter, throughout all the proof (which is quite long) of Theorem \ref{turnpike_thm}, we fix a limit point (closure point) of that family and a converging subsequence $(T_k)_{k\in\N^*}$. This gives a ``limit turnpike-static" optimal control problem.
\end{remark}

\paragraph{Limit turnpike-static optimal control problem.}
By Lemma \ref{lem_static_asympt} and Assumption \ref{H2_y1T}, the family $(\bar x^T,\frac{y_1^T}{T},\bar u^T,\bar p_x^T,\bar p_y^T)_{T\geq T_0}$ is bounded as $T\rightarrow+\infty$.
Let $(\bar x^\infty,\bar d^\infty,\bar u^\infty,\bar p_x^\infty,\bar p_y^\infty)$ and let $(T_k)_{k\in\N^*}$ be a sequence of positive real numbers converging to $+\infty$ such that
\begin{equation}\label{limit_tuple}
\bar x^{T_k} \rightarrow\bar x^\infty, \qquad \frac{y_1^{T_k}}{T_k}\rightarrow\bar d^\infty, \qquad \bar u^{T_k}\rightarrow\bar u^\infty, \qquad \bar p_x^{T_k} \rightarrow\bar p_x^\infty, \qquad \bar p_y^{T_k} \rightarrow\bar p_y^\infty
\end{equation}
as $k\rightarrow+\infty$. 
Note that $\Vert\bar d^\infty\Vert\leq C_1$ and $\bar u^\infty\in\Omega_1$ (recall that $\Omega_1$ is a closed subset of $\Omega$).
We infer from \eqref{lagbarT} that
\begin{equation}\label{lagbarinfty}
\begin{split}
& df(\bar x^\infty,\bar u^\infty)^\top\bar p_x^\infty + dg(\bar x^\infty,\bar u^\infty)^\top\bar p_y^\infty  - df^0(\bar x^\infty,\bar u^\infty)^\top = 0 , \\
& f(\bar x^\infty,\bar u^\infty) =0, \qquad g(\bar x^\infty,\bar u^\infty) = \bar d^\infty .
\end{split}
\end{equation}

\begin{lemma}\label{lemoptinfty}
The pair $(\bar x^\infty,\bar u^\infty)$ is an optimal solution of the so-called ``limit turnpike-static" problem
\begin{equation}\label{limitstaticpb}
\min \left\{ f^0(x,u) \ \mid\ (x,u)\in\R^n\times\Omega, \quad f(x,u)=0 , \quad  g(x,u) = \bar d^\infty \right\}
\end{equation}
and its unique Lagrange multiplier is normal and is $(\bar p_x^\infty,\bar p_y^\infty)$.
\end{lemma}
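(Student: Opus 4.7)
The plan is to essentially re-use the argument already carried out in the proof of Lemma \ref{lem_static_asympt}. Indeed, that proof established (in its second half, as part of the contradiction argument for boundedness of $(\bar p_x^T,\bar p_y^T)$) that the limit point $(\bar x^\infty,\bar u^\infty)$ is a minimizer of the constrained optimization problem \eqref{limitstaticpb}; here we simply repackage that argument and add the statements concerning the Lagrange multiplier.

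First I would pass to the limit in the first-order optimality system \eqref{lagbarT} written for $T=T_k$. Using the convergence \eqref{limit_tuple} together with the $C^2$ regularity of $f$, $g$ and $f^0$, we get directly \eqref{lagbarinfty}, which says that $(\bar p_x^\infty,\bar p_y^\infty)$ is a normal Lagrange multiplier (with $p^0=-1$) associated with the feasible point $(\bar x^\infty,\bar u^\infty)\in\R^n\times\mathring{\Omega}$ for the constraint set $\{f=0,\ g=\bar d^\infty\}$.

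Next I would prove optimality by passing to the limit in the inequality
\[
f^0(\bar x^{T_k},\bar u^{T_k})\leq f^0(x,u)\qquad \forall(x,u)\ \text{with}\ f(x,u)=0,\ g(x,u)=\tfrac{y_1^{T_k}-y_0}{T_k}.
\]
The delicate point, exactly as in the proof of Lemma \ref{lem_static_asympt}, is that a feasible point $(x,u)$ of the limit problem \eqref{limitstaticpb} (satisfying $g(x,u)=\bar d^\infty$) is not \emph{a priori} feasible for the $T_k$-problem (which requires $g=\frac{y_1^{T_k}-y_0}{T_k}$). I would overcome this using Assumption \ref{inftyqualified}: since $\{f=0,\ g=\theta\}$ is a $C^2$ submanifold for every $\theta$ close to $\bar d^\infty$, the differential $(df,dg)$ is surjective at each such point, so by the implicit function theorem (tubular neighborhood form) one can produce a $C^1$ family $(x_k,u_k)$ with $f(x_k,u_k)=0$, $g(x_k,u_k)=\frac{y_1^{T_k}-y_0}{T_k}$, $u_k\in\Omega$, and $(x_k,u_k)\to(x,u)$. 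Inserting these into the above inequality and letting $k\to\infty$ gives $f^0(\bar x^\infty,\bar u^\infty)\leq f^0(x,u)$, establishing optimality.

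Finally, for uniqueness of the Lagrange multiplier: Assumption \ref{inftyqualified} guarantees that the linearized constraint map $(df,dg)(\bar x^\infty,\bar u^\infty)$ is surjective (in particular the problem has no nontrivial abnormal multiplier), so that the classical Lagrange multiplier rule applies in its normal, uniquely determined form. Thus $(\bar p_x^\infty,\bar p_y^\infty)$ obtained by limiting procedure is the unique Lagrange multiplier of \eqref{limitstaticpb} at $(\bar x^\infty,\bar u^\infty)$. The main obstacle, as indicated above, is constructing the approximating feasible points $(x_k,u_k)$ in the right regime; once Assumption \ref{inftyqualified} is invoked, this reduces to a standard application of the implicit function theorem, and the rest of the proof is a straightforward passage to the limit.
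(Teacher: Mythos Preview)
Your proposal is correct and follows essentially the same approach as the paper's proof: both construct approximating feasible points $(x_k,u_k)\in N^{T_k}$ via Assumption \ref{inftyqualified} and the tubular neighborhood/implicit function theorem, pass to the limit in the optimality inequality, and then use the surjectivity of $(df,dg)$ at $(\bar x^\infty,\bar u^\infty)$ to conclude normality and uniqueness of the multiplier, identifying it with $(\bar p_x^\infty,\bar p_y^\infty)$ through \eqref{lagbarinfty}. The only cosmetic difference is the order of presentation (you establish the multiplier relation first, the paper establishes optimality first).
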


\begin{proof}
We set $N^\infty = \{(x,u)\in\R^n\times\Omega\ \mid\ f(x,u)=0 , \quad  g(x,u) = \bar d^\infty\}$ and $N^T = \{(x,u)\in\R^n\times\Omega\ \mid\ f(x,u)=0 , \quad  g(x,u) = \frac{y_1^T-y_0}{T}\}$. 
Let $(x,u)\in N^\infty$. For every $k$, by Assumption \ref{inftyqualified} and by the tubular neighborhood theorem, let $(x^k,u^k)\in N^{T_k}$ such that $(x^k,u^k)\rightarrow (x,u)$ as $k\rightarrow +\infty$. We have $f^0(\bar x^{T_k},\bar u^{T_k}) \leq f^0(x^k,u^k)$ because $(\bar x^{T_k},\bar u^{T_k})$ minimizes $f^0$ on $N^{T_k}$, and then, taking the limit, we obtain that $f^0(\bar x^\infty,\bar u^\infty)\leq f^0(x,u)$. Hence, $(\bar x^\infty,\bar u^\infty)$ is an optimal solution of \eqref{limitstaticpb}. 

By Assumption \ref{inftyqualified}, $(\bar x^\infty,\bar u^\infty)$ has a unique extremal lift, which is normal. 
By \eqref{lagbarinfty}, the Lagrange multiplier must be $(\bar p_x^\infty,\bar p_y^\infty)$.
\end{proof}

We denote by $A^\infty$, $\bar A^\infty$, $\bar B^\infty$, $\bar U^\infty$ and $\bar W^\infty$ the limits, respectively, of $A^{T_k}$, $\bar A^{T_k}$, $\bar B^{T_k}$, $\bar U^{T_k}$ and $\bar W^{T_k}$ as $k\rightarrow+\infty$.
By taking the limit $T_k\rightarrow+\infty$ in Assumptions \ref{H2_U}, \ref{H2_W} and \ref{H2_Kalman}, we have $\bar U^\infty\geq\delta I_m$, $\bar W^\infty\geq\delta I_n$, and the pair $(\bar A^\infty,\bar B^\infty)$ (equivalently, $(A^\infty,B^\infty)$) satisfies the Kalman condition.

\begin{lemma}\label{CV_rate_static}
The convergence estimates in \eqref{limit_tuple} are in $\frac{1}{T_k}$, i.e., there exists $C>0$ such that $\Vert\bar x^{T_k} -\bar x^\infty\Vert\leq\frac{C}{T_k}$, and the same for the others, for every $k\in\N^*$.
\end{lemma}

\begin{proof}
The proof is similar to the first part of the proof of Lemma \ref{lem_static_asympt}: it suffices to replace the optimality system \eqref{lagbarT} with the limit one \eqref{lagbarinfty}. Then all arguments work exactly in the same way (also, replace $\frac{y_1^T-y_0}{T}$ with the limit $\bar d^\infty$), since the limit pair $(\bar A^\infty,\bar B^\infty)$ satisfies the Kalman assumption, $\bar W^\infty$ is positive definite
\end{proof}

\subsection{Auxiliary optimal control problem}\label{sec_auxiliary}
Given any $t_0,t_1\in[0,+\infty)$ such that $t_0<t_1$, any $x_0,x_1\in\R^n$ and any $\eta\in\R^p$, we consider the auxiliary optimal control problem
\begin{equation*}
\mathcal{P}(t_0,t_1,x_0,x_1,\eta_0,\eta_1)\qquad 
\left\{\begin{split}
& \dot x(t) = f(x(t),u(t)), \qquad  x(t_0) = x_0, \quad x(t_1) = x_1  \\
& \dot y(t) = g(x(t),u(t)), \qquad\, y(t_0)=t_0\eta_0, \quad\ \, y(t_1)= t_1\eta_1 \\
& u(t)\in\Omega \\
& \min \int_{t_0}^{t_1} f^0(x(t),u(t))\, dt 
\end{split} \right. 
\end{equation*}
According to the Pontryagin maximum principle, if $(x(\cdot),y(\cdot),u(\cdot))$ is a locally optimal solution of $\mathcal{P}(t_0,t_1,x_0,x_1,\eta_0,\eta_1)$ having at least one normal extremal lift, then there exist $p_x(\cdot):[t_0,t_1]\rightarrow\R^n$ absolutely continuous and $p_y\in\R^p$ such that
\begin{equation}\label{extrem}
\begin{split}
\dot x(t) &= f(x(t),u(t)) \\
\dot y(t) &= g(x(t),u(t)) \\
\dot p_x(t) &= - \frac{\partial f}{\partial x}(x(t),u(t))^\top p_x(t) - \frac{\partial g}{\partial x}(x(t),u(t))^\top p_y + \frac{\partial f^0}{\partial x}(x(t),u(t))^\top 
\end{split}
\end{equation}
where $u(t)$ is solution of
\begin{equation}\label{extremu}
\frac{\partial f}{\partial u}(x(t),u(t))^\top p_x(t) + \frac{\partial g}{\partial u}(x(t),u(t))^\top p_y - \frac{\partial f^0}{\partial x}(x(t),u(t))^\top = 0
\end{equation}
almost everywhere on $[t_0,t_1]$.

\begin{lemma}\label{leminftydyn}
Taking $x_0=x_1=\bar x^\infty$ and $\eta_0=\eta_1=\bar d^\infty$, for any $t_0,t_1\in[0,+\infty)$ such that $t_0<t_1$, the trajectory $t\mapsto (x(t),y(t),u(t))=(\bar x^\infty,t\bar d^\infty,\bar u^\infty)$ is an optimal solution of the optimal control problem $\mathcal{P}(t_0,t_1,\bar x^\infty,\bar x^\infty,\bar d^\infty,\bar d^\infty)$. Moreover, this optimal solution has a unique extremal lift which is normal, the adjoint vectors corresponding to $x$ and $y$ being constant, equal to $\bar p_x^\infty$, $\bar p_y^\infty$ respectively.
\end{lemma}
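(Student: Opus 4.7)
The plan is to verify four things in turn: admissibility of the trivial trajectory $(\bar x^\infty, t\bar d^\infty, \bar u^\infty)$, its optimality, existence of a normal extremal lift given by the constant adjoints $(\bar p_x^\infty, \bar p_y^\infty)$, and uniqueness of that lift. Admissibility is immediate from \eqref{lagbarinfty}, which gives $f(\bar x^\infty, \bar u^\infty) = 0$ (hence $x(\cdot) \equiv \bar x^\infty$) and $g(\bar x^\infty, \bar u^\infty) = \bar d^\infty$ (hence $y(t) = t\bar d^\infty$ satisfies both the dynamics and the prescribed endpoints). For optimality, the decisive point is that for any admissible $(x, y, u)$ on $[t_0, t_1]$, the condition $x(t_0) = x(t_1) = \bar x^\infty$ makes the storage term $S^{T_k}(x(t_1)) - S^{T_k}(x(t_0))$ cancel in the partial $x$-strict dissipativity inequality of Assumption \ref{H2_dissip} at $T = T_k$. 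What remains is $\int_{t_0}^{t_1}[f^0(x(t), u(t)) - f^0(\bar x^{T_k}, \bar u^{T_k})]\, dt \geq 0$, i.e.\ $J(x,y,u) \geq (t_1-t_0) f^0(\bar x^{T_k}, \bar u^{T_k})$; passing $k \to \infty$ using \eqref{limit_tuple} and continuity of $f^0$ yields $J(x,y,u) \geq (t_1-t_0) f^0(\bar x^\infty, \bar u^\infty)$, which is exactly the cost of the trivial trajectory.

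For the normal extremal lift, I set $p_x(\cdot) \equiv \bar p_x^\infty$, $p_y \equiv \bar p_y^\infty$, $p^0 = -1$: the first $n$ rows of \eqref{lagbarinfty} give $\dot p_x = 0$ matching the right-hand side of \eqref{extrem} exactly, and the last $m$ rows give the stationarity \eqref{extremu}. This produces a normal extremal lift in the sense of the maximum principle as formulated in \eqref{extrem}--\eqref{extremu}.

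The genuinely delicate step is uniqueness together with normality. Let $(p_x(\cdot), p_y, p^0)$ be any extremal lift of the trivial trajectory. Since $\bar u^\infty \in \mathring{\Omega}$ is constant, the stationarity \eqref{extremu} reads $(\bar B_1^\infty)^\top p_x(t) + (\bar B_2^\infty)^\top p_y = -p^0\, \partial_u f^0(\bar x^\infty, \bar u^\infty)^\top$ for all $t$. Differentiating in $t$ and substituting the adjoint ODE $\dot p_x = -(\bar A_1^\infty)^\top p_x - (\bar A_2^\infty)^\top p_y + p^0\, \partial_x f^0(\bar x^\infty, \bar u^\infty)^\top$ shows by induction that $((\bar A_1^\infty)^k \bar B_1^\infty)^\top p_x(t)$ is constant in $t$ for every $k \geq 0$. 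Passing \ref{H2_Kalman} to the limit, combined with the block-triangular form of $\bar A^T$ and the Cayley--Hamilton theorem, yields the Kalman condition for $(\bar A_1^\infty, \bar B_1^\infty)$, which forces $p_x(\cdot)$ to be constant. With $(p_x, p_y)$ constant, the full system reduces to the linear equation
\[
\begin{pmatrix} (\bar A_1^\infty)^\top & (\bar A_2^\infty)^\top \\ (\bar B_1^\infty)^\top & (\bar B_2^\infty)^\top \end{pmatrix} \begin{pmatrix} p_x \\ p_y \end{pmatrix} \;=\; -p^0 \begin{pmatrix} \partial_x f^0(\bar x^\infty, \bar u^\infty)^\top \\ \partial_u f^0(\bar x^\infty, \bar u^\infty)^\top \end{pmatrix},
\]
whose coefficient matrix is injective because its transpose is surjective, by the Hautus test applied to $(\bar A^\infty, \bar B^\infty)$ at the eigenvalue $\lambda = 0$ (combined with the fact that the middle $p$ columns of $(-\bar A^\infty\ |\ \bar B^\infty)$ are zero, thanks to the block structure of $\bar A^T$). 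The case $p^0 = 0$ then forces $(p_x, p_y) = (0,0)$, ruling out abnormality; normalizing $p^0 = -1$, injectivity combined with \eqref{lagbarinfty} identifies $(p_x, p_y) = (\bar p_x^\infty, \bar p_y^\infty)$. The main obstacle is precisely this Kalman/Hautus step: neither the constancy of $p_x(\cdot)$ nor the injectivity of the reduced algebraic system is a priori obvious, and both must be extracted from the uniform full-system assumption \ref{H2_Kalman} via the particular block structure of $\bar A^T$.
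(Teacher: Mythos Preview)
Your proof is correct, and the optimality argument (cancellation of the storage terms in the dissipativity inequality at $T_k$, then letting $k\to\infty$) is exactly the paper's. For the extremal lift, the paper compresses existence, normality and uniqueness into the single clause ``the rest of the statement follows from \eqref{lagbarinfty}''; you instead spell the uniqueness and normality out in full, first forcing $p_x(\cdot)$ to be constant via the Kalman condition on $(\bar A_1^\infty,\bar B_1^\infty)$ (deduced from \ref{H2_Kalman} through the block structure of $\bar A^T$), and then using the Hautus test at $\lambda=0$ for $(\bar A^\infty,\bar B^\infty)$ to obtain injectivity of the reduced algebraic system in $(p_x,p_y)$. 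This is a genuine and useful unpacking of a step the paper leaves implicit: it makes transparent exactly which structural hypotheses (\ref{H2_Kalman} and the block form of $\bar A^T$) are responsible for ruling out abnormal lifts and time-varying adjoints. One harmless slip: in your adjoint ODE the last term should read $-p^0\,\partial_x f^0$ rather than $+p^0\,\partial_x f^0$ (from $H=\langle p_x,f\rangle+\langle p_y,g\rangle+p^0 f^0$), but your final linear system already carries the correct signs, so the argument is unaffected.
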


\begin{proof}
By Assumption \ref{H2_dissip}, the partial $x$-dissipativity inequality \eqref{dissip} implies that, for any $x(\cdot)$ solution of $\dot x(t)=f(x(t),u(t))$ on $[t_0,t_1]$ such that $x(t_0)=x(t_1)$, since $S^{T_k}(x(t_0)) = S^{T_k}(x(t_1))$ we get that $\int_{t_0}^{t_1} f^0(\bar x^{T_k},\bar u^{T_k})\, dt \leq \int_{t_0}^{t_1} f^0(x(t),u(t))\, dt$, and letting $k$ tend to $+\infty$ we obtain that $\int_{t_0}^{t_1} f^0(\bar x^\infty,\bar u^\infty)\, dt \leq \int_{t_0}^{t_1} f^0(x(t),u(t))\, dt$. Therefore $(x(t),y(t),u(t))=(\bar x^\infty,t\bar d^\infty,\bar u^\infty)$ is an optimal solution of $\mathcal{P}(t_0,t_1,\bar x^\infty,\bar x^\infty,\bar d^\infty,\bar d^\infty)$. The rest of the statement follows from \eqref{lagbarinfty}. 
\end{proof}

\begin{remark}\label{rem_statictodynamic}
Lemma \ref{leminftydyn} shows that $(x(t),y(t),u(t))=(\bar x^\infty,t\bar d^\infty,\bar u^\infty)$ is not only solution of the limit turnpike-static problem, but also of the \emph{dynamical} optimal control problem $\mathcal{P}(t_0,t_1,\bar x^\infty,\bar x^\infty,\bar d^\infty,\bar d^\infty)$. This stronger fact is due to the dissipativity assumption. This is an important step because here, in some sense, we pass from static to dynamic.

Note that $t\mapsto(\bar x^\infty,t\bar d^\infty,\bar u^\infty)$ is a globally optimal solution of $\mathcal{P}(t_0,t_1,\bar x^\infty,\bar x^\infty,\bar d^\infty,\bar d^\infty)$. In the next lemma, we show that the property that the trajectory $t\mapsto(\bar x^\infty,t\bar d^\infty,\bar u^\infty)$ is a locally optimal solution having a normal extremal lift is robust near $\bar x^\infty$ and $\bar d^\infty$, for any $0\leq t_0<t_1$.
\end{remark}

The main result of that section is the following.

\begin{proposition}\label{prop_sensitivity_dyn}
There exist $\varepsilon_0>0$, $T_0>0$, $C>0$ and $\nu>0$ such that, for any $t_0,t_1\in[0,+\infty)$ such that $t_1-t_0\geq T_0$, for any $\varepsilon\in(0,\varepsilon_0)$, for any $x_0,x_1\in B(\bar x^\infty,\varepsilon)$ and any $\eta_0,\eta_1\in B(\bar d^\infty,\varepsilon)$: 
\begin{enumerate}[label=(\roman*)]
\item\label{casei} the optimal control problem $\mathcal{P}(t_0,t_1,x_0,x_1,\eta_0,\eta_1)$ has a locally optimal solution $(x(\cdot),y(\cdot),u(\cdot))$ (for the $L^\infty$ topology on $u$) that has a normal extremal lift $(x(\cdot),y(\cdot),p_x(\cdot),p_y,-1,u(\cdot))$, with $u$ being $C^1$; \\
moreover, this optimal solution is locally unique in the sense that there does not exist any other control $\tilde u\in L^\infty([t_0,t_1],\mathcal{U})$ whose corresponding trajectory would also be a locally optimal solution of $\mathcal{P}(t_0,t_1,x_0,x_1,\eta_0,\eta_1)$;
\item\label{caseii} $t\mapsto(x(t),\frac{y(t)}{\max(1,t)},p_x(t),p_y,u(t))$ converges to $(\bar x^\infty, \bar d^\infty, \bar p_x^\infty, \bar p_y^\infty,\bar u^\infty)$ in $C^0$ topology on $[t_0,t_1]$ as $\varepsilon\rightarrow 0$; \\
(we divide $y(t)$ by $\max(1,t)$ to avoid division by $0$)
\item\label{caseiii} 
we have the estimates
\begin{equation}\label{linear_turnpike_estimates_prop}
\begin{split}
& \Vert x(t)-\bar x^\infty\Vert + \Vert p_x(t)-\bar p_x^\infty\Vert + \Vert u(t)-\bar u^\infty\Vert \\
& \qquad \leq C\varepsilon \left( e^{-\nu (t-t_0)} + e^{-\nu(t_1-t)} \right) + \frac{C}{t_1-t_0} \left( \varepsilon + \Vert y(t_1)-y(t_0)-(t_1-t_0)\bar d^\infty\Vert \right)  , \\
& \Vert y(t)-t\bar d^\infty\Vert \leq C \left( \varepsilon + \Vert y(t_0)-t_0\bar d^\infty\Vert + \Vert y(t_1)-y(t_0)-(t_1-t_0)\bar d^\infty\Vert \right), \\
& \Vert p_y-\bar p_y^\infty\Vert \leq \frac{C}{t_1-t_0} \left( \varepsilon + \Vert y(t_1)-y(t_0)-(t_1-t_0)\bar d^\infty\Vert \right)  ,
\end{split}
\end{equation}
for every $t\in[t_0,t_1]$.
\end{enumerate}
\end{proposition}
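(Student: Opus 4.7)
The plan is to convert $\mathcal{P}(t_0,t_1,x_0,x_1,\eta_0,\eta_1)$ into a shooting problem, analyze it via the hyperbolic structure of the extremal flow at the limit equilibrium, and then bootstrap to obtain uniformity in $(t_0,t_1)$. Since $\bar H_{uu}=-\bar U^\infty$ is negative definite (the limit form of \ref{H2_U}), the maximization condition \eqref{extremu} can be solved locally for $u$ via the implicit function theorem, yielding a $C^1$ map $u=\mathcal{U}(x,p_x,p_y)$ with $\mathcal{U}(\bar x^\infty,\bar p_x^\infty,\bar p_y^\infty)=\bar u^\infty$. Substituting into \eqref{extrem} reduces the extremal equations to a Hamiltonian ODE in $(x,p_x)$ in which $p_y$ enters as a constant parameter and $y$ is decoupled (recovered from $x$ by quadrature).

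The natural shooting unknowns are then $(p_x(t_0),p_y)\in\R^n\times\R^p$, and the matching conditions are $x(t_1)=x_1$ and $y(t_1)-y(t_0)=t_1\eta_1-t_0\eta_0$, giving exactly $n+p$ scalar equations. By Lemma \ref{leminftydyn}, the shooting equation at the reference data $(\bar x^\infty,\bar x^\infty,\bar d^\infty,\bar d^\infty)$ is solved by $(\bar p_x^\infty,\bar p_y^\infty)$. To establish local invertibility, I would analyze the Jacobi/variational equation along this trivial extremal: thanks to \ref{H2_U}, \ref{H2_W} and the Kalman condition \ref{H2_Kalman} (Hautus-type surjectivity, which also controls the $y$-block), standard conjugate-point theory produces a locally unique normal extremal on any fixed time window and $C^0$-continuity with respect to data; this gives \ref{casei} and \ref{caseii} on any bounded time range.

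The exponential decay in \eqref{linear_turnpike_estimates_prop} is the signature of hyperbolicity of the reduced Hamiltonian flow at $(\bar x^\infty,\bar p_x^\infty)$ for $p_y=\bar p_y^\infty$: its linearization is the Hamiltonian matrix
\[
\mathcal{H}^\infty=\begin{pmatrix} A_1^\infty & \bar B_1^\infty(\bar U^\infty)^{-1}(\bar B_1^\infty)^\top \\ \bar W^\infty & -(A_1^\infty)^\top \end{pmatrix},
\]
which under \ref{H2_W}--\ref{H2_Kalman} admits $n$-dimensional stable and unstable invariant subspaces with a spectral gap $\nu>0$; this is exactly the Riccati mechanism behind Theorem \ref{thm_TZ}. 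I would then recast the extremal system as a fixed-point problem using the stable projector at $t_0$ and the unstable projector at $t_1$ in a weighted $L^\infty$-norm on which they are contractive at rate $\nu$, producing $(x,p_x,u)$-estimates of the form $\varepsilon(e^{-\nu(t-t_0)}+e^{-\nu(t_1-t)})$. The $\frac{1}{t_1-t_0}$ correction then arises from the $y$-matching: averaging $\dot y=g(x,u)$ over $[t_0,t_1]$ forces the mean of $g(x,u)-\bar d^\infty$ to equal $\frac{1}{t_1-t_0}\bigl(y(t_1)-y(t_0)-(t_1-t_0)\bar d^\infty\bigr)$, and since the linearized static constraint map $(x,u)\mapsto(f,g)$ is surjective by \ref{H2_Kalman}, this compatibility pins down $p_y-\bar p_y^\infty$ at precisely the order stated in the third estimate; propagating this perturbation of $p_y$ through the reduced Hamiltonian flow then injects the missing $\frac{1}{t_1-t_0}$ contribution into the $(x,p_x,u)$-estimate. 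The second estimate is obtained by integrating $\dot y-\bar d^\infty=g(x,u)-g(\bar x^\infty,\bar u^\infty)$, whose integrand is exponentially small away from $t_0,t_1$.

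The main obstacle, as the section overview stresses, is that the neighborhood radius $\varepsilon_0$ must not shrink as $t_1-t_0\to+\infty$: a direct implicit function argument on the shooting map would give an $\varepsilon_0$ degenerating with $t_1-t_0$, because the sensitivity of $x(t_1)$ with respect to $p_x(t_0)$ blows up along the unstable subspace. The remedy I envisage is a bootstrap: start from crude bounds coming from conjugate-point analysis on a bounded window, feed them into the stable/unstable fixed-point reformulation to obtain sharper exponential-decay bounds on $(x-\bar x^\infty, p_x-\bar p_x^\infty, u-\bar u^\infty)$, use those to refine the estimate on $p_y-\bar p_y^\infty$ via the $y$-matching, and iterate. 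Carefully decoupling the fast hyperbolic $(x,p_x)$ block from the slow $(y,p_y)$ block throughout this iteration, and checking that the Lipschitz constants inherited from the $C^2$ nonlinearity remain controlled by $\varepsilon$ alone (independently of $t_1-t_0\geq T_0$), is the delicate technical crux where the assumption \ref{inftyqualified} and the uniform hypotheses \ref{H2_U}--\ref{H2_Kalman} are used jointly.
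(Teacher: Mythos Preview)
Your plan matches the paper's proof closely: shooting formulation in $(p_x(t_0),p_y)$, hyperbolicity of the reduced Hamiltonian matrix $\mathcal H^\infty$ (the paper's $M$) via Riccati diagonalization into stable/unstable parts, conjugate-point theory for existence on each fixed window, and a bootstrap on the shooting map to obtain $\varepsilon_0$ independent of $t_1-t_0$. The paper carries out exactly these steps in Sections~\ref{sec-sentiv}--\ref{sec-unifneighb}, with the explicit Riccati change of variables $P$ playing the role of your stable/unstable projectors.

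There is one genuine soft spot. You write that surjectivity of the linearized static constraint map $(x,u)\mapsto(f,g)$ ``pins down $p_y-\bar p_y^\infty$''. In the paper's argument this step is the content of Lemma~\ref{lem_R}: after inserting the hyperbolic estimates into the $y$-equation and integrating, one obtains $\delta y(t_1)-\delta y(t_0)\simeq (t_1-t_0)\,R\,\delta p_y$ where $R=LM^{-1}V+\bar B_2^\infty(\bar U^\infty)^{-1}(\bar B_2^\infty)^\top$ is a specific $p\times p$ matrix, and one must prove $R$ is invertible. Surjectivity of $(df,dg)$ is not the right hypothesis here and does not obviously imply this; the paper instead proves $R\succeq 0$ by rewriting it as a sum of two Schur-complement-type expressions and applying a Moore--Penrose/Tikhonov inequality (Lemma~\ref{lem_general}), using only the Kalman condition on the pair $(A_1^\infty,\bar B_1^\infty)$. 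This is a self-contained but non-trivial matrix computation that your averaging heuristic does not cover, and without it the $\frac{1}{t_1-t_0}$ bound on $\delta p_y$ --- and hence the entire estimate \eqref{linear_turnpike_estimates_prop} --- does not close. (Also, Assumption~\ref{inftyqualified} is not used in this proposition; it is consumed earlier, in Lemma~\ref{lem_static_asympt}.)
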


The rest of the section is devoted to proving Proposition \ref{prop_sensitivity_dyn}. The proof is quite lengthy. We use sensitivity analysis, conjugate point theory, analysis of a dynamical system near a hyperbolic singular point, and a number of bootstrap arguments aiming at improving, step by step, the estimates. The successive bootstrap arguments ultimately lead to \eqref{linear_turnpike_estimates_prop}.
A major difficulty in Proposition \ref{prop_sensitivity_dyn} is to prove the existence of $\varepsilon_0$ not depending on $t_0,t_1$ for $t_1-t_0$ large enough.

More precisely, in Section \ref{sec-sentiv}, we establish Lemma \ref{lem_sensitivity}, which gives \ref{casei} (except the local uniqueness property) and \ref{caseii} of Proposition \ref{prop_sensitivity_dyn} but with $\varepsilon_0$ depending on $t_0$ and $t_1$. The proof of the existence of $\varepsilon_0$ not depending on $t_0,t_1$ will be done further, in Section \ref{sec-unifneighb}: it requires other results and several bootstrap arguments.

In-between, in Section \ref{sec_localturn}, we study the extremal system \eqref{extrem}-\eqref{extremu} locally around the trivial trajectory $t\mapsto(\bar x^\infty,t\bar d^\infty,\bar p_x^\infty,\bar p_y^\infty)$. We will call ``linearized system", the linearization of the extremal system \eqref{extrem}-\eqref{extremu} along this trivial trajectory. The main result in Section \ref{sec_localturn} is Lemma \ref{lem_local_turnpike}, which gives \ref{caseiii}, i.e., the linear turnpike estimates \eqref{linear_turnpike_estimates_prop}. 

Sections \ref{sec_proof_lem_expest} and \ref{sec_proof_lem_R} are devoted to proving two technical lemmas, used in the proof of Lemma \ref{lem_local_turnpike}.

Finally, in Section \ref{sec-unifneighb}, we conclude the proof of Proposition \ref{prop_sensitivity_dyn}, by bootstrapping Lemmas \ref{lem_sensitivity} and \ref{lem_local_turnpike} and finally establish the existence of $\varepsilon_0$ that is uniform with respect to $t_0$ and $t_1$ whenever $t_1-t_0$ is large enough, and obtaining the local uniqueness property.

\subsubsection{Sensitivity analysis}\label{sec-sentiv}
In this section, we establish the following lemma, which implies \ref{casei} (except the local uniqueness property) and \ref{caseii} of Proposition \ref{prop_sensitivity_dyn} but, for the moment, with $\varepsilon_0$ depending on $t_0$ and $t_1$. 

\begin{lemma}\label{lem_sensitivity}
There exist $T_0>0$ such that, for any $t_0,t_1\in[0,+\infty)$ satisfying $t_1-t_0\geq T_0$, there exist $\varepsilon_0>0$ (depending on $t_0$ and $t_1$) such that, for any $\varepsilon\in(0,\varepsilon_0)$, $x_0,x_1\in B(\bar x^\infty,\varepsilon)$ and $\eta_0,\eta_1\in B(\bar d^\infty,\varepsilon)$, the optimal control problem $\mathcal{P}(t_0,t_1,x_0,x_1,\eta_0,\eta_1)$ has a locally optimal solution $(x(\cdot),y(\cdot),u(\cdot))$ that has a normal extremal lift $(x(\cdot),y(\cdot),p_x(\cdot),p_y,-1,u(\cdot))$, with $u$ being $C^1$. \\
Moreover, $t\mapsto(x(t),\frac{y(t)}{\max(1,t)},p_x(t),p_y,u(t))$ converges to $(\bar x^\infty, \bar d^\infty, \bar p_x^\infty, \bar p_y^\infty,\bar u^\infty)$ in $C^0$ topology on $[t_0,t_1]$ as $\varepsilon\rightarrow 0$ converges to $(\bar x^\infty,\bar x^\infty,\bar d^\infty,\bar d^\infty)$.
\end{lemma}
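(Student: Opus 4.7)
The strategy is a shooting-type implicit function theorem argument, using the constant reference extremal $(\bar x^\infty, t\bar d^\infty, \bar p_x^\infty, \bar p_y^\infty, \bar u^\infty)$ given by Lemma \ref{leminftydyn} as pivot. First, since $\bar H_{uu}$ at this reference point equals $-\bar U^\infty$, which is negative definite by Assumption \ref{H2_U} passed to the limit, and since $\bar u^\infty\in\mathring{\Omega}$ by Assumption \ref{H2_unique2}, I apply the implicit function theorem to the first-order condition \eqref{extremu} to obtain a $C^1$ map $(x,p_x,p_y)\mapsto U(x,p_x,p_y)$ defined on a neighborhood of $(\bar x^\infty,\bar p_x^\infty,\bar p_y^\infty)$, such that $u=U(x,p_x,p_y)$ realizes the maximum of $H$. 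Substituting into \eqref{extrem} yields a smooth autonomous ODE in $(x,y,p_x)$ parameterized by the constant $p_y$; the reference trajectory is one of its solutions.

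\textbf{Shooting map and Jacobian.} Given $(x_0,\eta_0,\pi,q)$ close to $(\bar x^\infty,\bar d^\infty,\bar p_x^\infty,\bar p_y^\infty)$, I integrate this reduced extremal ODE on $[t_0,t_1]$ from $(x_0,t_0\eta_0,\pi,q)$ and define the shooting map $\Phi_{t_0,t_1}(x_0,\eta_0,\pi,q)=\bigl(x(t_1),y(t_1)/t_1\bigr)$. Lemma \ref{leminftydyn} gives $\Phi_{t_0,t_1}(\bar x^\infty,\bar d^\infty,\bar p_x^\infty,\bar p_y^\infty)=(\bar x^\infty,\bar d^\infty)$. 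To invoke the implicit function theorem and solve $(\pi,q)$ as a function of the boundary data $(x_0,\eta_0,x_1,\eta_1)$, I need the partial Jacobian $\partial_{(\pi,q)}\Phi_{t_0,t_1}$ to be invertible at the reference point. The linearization of the reduced extremal flow along the reference is an autonomous linear system that splits into three pieces: a Hamiltonian block in $(\delta x,\delta p_x)$ with constant matrix; a constant parameter $\delta p_y$ acting as forcing; and an equation $\delta\dot y=\bar A_2^\infty\delta x+\bar B_2^\infty\delta u$ that reconstructs $\delta y$ by integration. Assumptions \ref{H2_U}, \ref{H2_W}, \ref{H2_Kalman} imply, exactly as in the analysis of \cite{TZ}, that the $(\delta x,\delta p_x)$-block is hyperbolic (no purely imaginary eigenvalues), and the Kalman condition on the extended pair $(\bar A^\infty,\bar B^\infty)$ ensures that the pair $(\delta\pi,\delta q)$ spans the target $\R^n\times\R^p$ for the pair $(\delta x(t_1),\delta y(t_1)/t_1)$ once $t_1-t_0\geq T_0$, for some $T_0>0$ depending only on the linearized data at the reference.

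\textbf{Conclusion via the implicit function theorem and continuity.} At fixed $t_0,t_1$ with $t_1-t_0\geq T_0$, the implicit function theorem yields some $\varepsilon_0>0$ (at this stage possibly depending on $t_0,t_1$) and a $C^1$ map $(x_0,\eta_0,x_1,\eta_1)\mapsto(\pi,q)$ on the $\varepsilon_0$-neighborhood of $(\bar x^\infty,\bar d^\infty,\bar x^\infty,\bar d^\infty)$ such that the shooting condition $(x(t_1),y(t_1))=(x_1,t_1\eta_1)$ is satisfied; the corresponding $(x,y,p_x,p_y,u)$ with $u=U(x,p_x,p_y)\in C^1$ provides the sought extremal tuple. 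Normality is built in since $p^0=-1$ throughout the construction. Local optimality follows from coercivity of the second variation, which holds under the strengthened Legendre condition $\bar U^\infty>0$ (Assumption \ref{H2_U}) together with the absence of conjugate points on $[t_0,t_1]$ established by the hyperbolicity argument above; this is the standard Jacobi-type sufficient condition. For the continuity statement, $C^1$ regularity of the implicit function gives $(\pi,q)\to(\bar p_x^\infty,\bar p_y^\infty)$ as $\varepsilon\to 0$, hence by continuous dependence of ODE solutions on parameters and initial data, $(x,p_x,u)$ converges to $(\bar x^\infty,\bar p_x^\infty,\bar u^\infty)$ in $C^0([t_0,t_1])$; finally, from $y(t)=t_0\eta_0+\int_{t_0}^{t}g(x(s),u(s))\,ds$ and $g(\bar x^\infty,\bar u^\infty)=\bar d^\infty$, I conclude that $y(t)/\max(1,t)\to\bar d^\infty$ in $C^0([t_0,t_1])$.

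\textbf{Main obstacle.} The only genuinely delicate point is the invertibility of $\partial_{(\pi,q)}\Phi_{t_0,t_1}$ for all $t_1-t_0\geq T_0$ with $T_0$ independent of $t_0,t_1$: this rests on the hyperbolic splitting of the linearized Hamiltonian and on spanning the target space through $(\delta\pi,\delta q)$, and the corresponding quantitative bounds must be tracked carefully so that they can be bootstrapped in Section \ref{sec-unifneighb} to make $\varepsilon_0$ itself uniform in $t_0,t_1$.
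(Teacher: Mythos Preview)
Your overall strategy is sound and leads to the same conclusion, but it differs from the paper's route. The paper works in the infinite-dimensional control space: it sets up the Lagrangian $L(t_0,t_1,x_0,\eta_0,x_1,\eta_1,u,\psi)$ and applies an implicit function theorem to the map $G=(\partial_u L,\,E-(x_1,t_1\eta_1))$ with values in $L^\infty\times\R^{n+p}$, showing that the sensitivity operator $\partial_{(u,\psi)}G$ is boundedly invertible because $\partial_u E$ is surjective (linearized controllability via the Kalman condition on $(\bar A^\infty,\bar B^\infty)$) and the restricted second variation $Q_{t_0,t_1}$ is positive definite. You instead eliminate $u$ first via the implicit function theorem on $\partial_u H=0$ and then run a finite-dimensional shooting argument on $(p_x(t_0),p_y)$. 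Your route is more elementary (finite-dimensional IFT only) and is in fact closer to what the paper does later in Section~\ref{sec-unifneighb} to obtain the uniform $\varepsilon_0$; the paper's Lagrangian route has the advantage that the conjugate point machinery applies directly.

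The one point where your argument is genuinely underjustified is the invertibility of $\partial_{(\pi,q)}\Phi_{t_0,t_1}$. You write that hyperbolicity of the $(\delta x,\delta p_x)$-block together with the Kalman condition on $(\bar A^\infty,\bar B^\infty)$ ``ensures that $(\delta\pi,\delta q)$ spans the target''. Hyperbolicity of $M$ alone does not preclude a nontrivial Jacobi field with $\delta x(t_0)=\delta x(t_1)=0$, and the Kalman condition concerns controllability of the state system, not surjectivity of the shooting map. What actually forces invertibility for \emph{all} $t_1>t_0$ is the paper's observation: the variational system is the extremal system of the LQ problem with weights $\bar W^\infty>0$ and $\bar U^\infty>0$, so the cost is nonnegative for every admissible pair, hence the first conjugate time is $+\infty$; equivalently, the exponential map (your shooting map) is a local diffeomorphism for every $t_1>t_0$. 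Since invertibility of your shooting Jacobian is exactly the absence of conjugate points, you should replace the vague hyperbolicity/Kalman sentence by this LQ positivity argument. Once that is done, your proof is complete and your ``main obstacle'' paragraph correctly anticipates the bootstrap in Section~\ref{sec-unifneighb}.
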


\begin{proof}
The proof goes by classical sensitivity analysis arguments.
We denote by $E:[0,+\infty)^2\times\R^n\times\R^p\times L^\infty([t_0,t_1],\Omega)\rightarrow\R^{n+p}$ the end-point mapping, defined by $E(t_0,t_1,x_0,\eta_0,u)=(x(t_1),y(t_1))$ where $(x(\cdot),y(\cdot))$ is the solution on $[t_0,t_1]$ of $\dot x(t)=f(x(t),u(t))$, $\dot y(t)=g(x(t),u(t))$ corresponding to the control $u$ and such that $x(t_0)=x_0$ and $y(t_0)=t_0\bar d^\infty$, and we denote by $C(t_0,t_1,x_0,\eta_0,u)=\int_{t_0}^{t_1} f^0(x(t),u(t))\, dt$ the associated cost. The optimal control problem $\mathcal{P}(t_0,t_1,x_0,x_1,\eta_0,\eta_1)$ is then equivalent to minimize $C(t_0,t_1,x_0,\eta_0,u)$ over all possible controls $u\in L^\infty([t_0,t_1],\Omega)$ such that $E(t_0,t_1,x_0,\eta_0,u)=(x_1,t_1\eta_1)$.

For $0<t_0<t_1$ fixed, if $(x(\cdot),y(\cdot),u(\cdot))$ is an optimal solution of $\mathcal{P}(t_0,t_1,x_0,x_1,\eta_0,\eta_1)$ on $[t_0,t_1]$, such that $u$ takes its values in $\Omega_1\subsetneq\Omega$ and is not abnormal, then according to the Lagrange multiplier rule there must exist $\psi\in\R^{n+p}$ such that 
\begin{multline}\label{lagt1}
\frac{\partial C}{\partial u}(t_0,t_1,x_0,\eta_0,u) + \big\langle\psi,\frac{\partial E}{\partial u}(t_0,t_1,x_0,\eta_0,u)\big\rangle = 0 , \\
\textrm{i.e.},\quad \frac{\partial L}{\partial u}(t_0,t_1,x_0,\eta_0,x_1,\eta_1,u,\psi)=0 ,
\end{multline}
(this equality is in the dual of $L^1([t_0,t_1],\R^m)$ that is identified with $L^\infty([t_0,t_1],\R^m)$)
where
$$
L(t_0,t_1,x_0,\eta_0,x_1,\eta_1,u,\psi)=C(t_0,t_1,x_0,\eta_0,u)+\langle\psi,E(t_0,t_1,x_0,\eta_0,u)-(x_1,t_1\eta_1)\rangle
$$
is the Lagrangian of the optimization problem. Therefore
$$
G(t_0,t_1,x_0,\eta_0,x_1,\eta_1,u,\psi) = \begin{pmatrix}
\frac{\partial L}{\partial u}(t_0,t_1,x_0,\eta_0,x_1,\eta_1,u,\psi) \\[1mm] E(t_0,t_1,x_0,\eta_0,u)-(x_1,t_1\eta_1)
\end{pmatrix} = 0 
$$
where $G: [0,+\infty)^2\times(\R^{n+p})^2\times L^\infty([t_0,t_1],\R^m)\times\R^{n+p} \rightarrow L^\infty([t_0,t_1],\R^m)\times\R^{n+p}$ is a $C^1$ mapping. 
The first-order optimality condition \eqref{lagt1} is the preliminary equation ultimately leading to the ``weak" Pontryagin maximum principle (see \cite{trelat_JDCS2000,trelat_book,Trelat_JOTA2012}); this means that, equivalently, there exist $p_x(\cdot):[0,t_1]\rightarrow\R^n$ and $p_y\in\R^p$ such that the extremal equations \eqref{extrem} and \eqref{extremu} are satisfied almost everywhere on $[t_0,t_1]$.
Moreover, we have $-\psi=(p_x(t_1),p_y)$, i.e., the Lagrange multiplier $-\psi$ coincides with the adjoint vector at the final time. 
Note, since $u$ takes its values in $\Omega_1$, we indeed have $\frac{\partial H}{\partial u}=0$ along the extremal (which is \eqref{extremu}), and when $(x(t),p_x(t),p_y,u(t))$ is close enough to $(\bar x^\infty,\bar p^\infty,\bar p^\infty,\bar u^\infty)$ (what we are going to ensure next), thanks to Assumption \ref{H2_U} one can solve this equation by the implicit function theorem and get $u$ as a $C^1$ function of $(x(t),p_x(t),p_y)$. 

Note that we have not proved yet that $\mathcal{P}(t_0,t_1,x_0,x_1,\eta_0,\eta_1)$ has an optimal solution. The equations \eqref{extrem}-\eqref{extremu} constitute the first-order optimality system for a locally optimal solution, if it exists and if it has a normal extremal lift. Hereafter, we are going to prove that, locally near the trivial solution $t\mapsto(\bar x^\infty,t\bar d^\infty,\bar p_x^\infty,\bar p_y^\infty,\bar u^\infty)$ of \eqref{extrem}-\eqref{extremu}, for all $0\leq t_0<t_1$ there exists a solution of the extremal system \eqref{extrem}-\eqref{extremu} such that $x(t_0)=x_0$, $x(t_1)=x_1$, $y(t_0)=t_0\eta_0$ and $y(t_1)=t_1\eta_1$, with $u$ of class $C^1$ as above, and that such a solution is indeed a locally optimal solution of $\mathcal{P}(t_0,t_1,x_0,x_1,\eta_0,\eta_1)$. This is done, hereafter, by sensitivity analysis and by conjugate point theory. 

By Lemma \ref{leminftydyn}, given any $0\leq t_0<t_1$ we have 
$$
G(t_0,t_1,\bar x^\infty,\bar d^\infty,\bar x^\infty,\bar d^\infty,\bar u^\infty,(\bar p_x^\infty,\bar p_y^\infty))=0 .
$$
Now, for any $0\leq t_0<t_1$ fixed and any $(x_0,x_1,\eta_0,\eta_1)$ sufficiently close to $(\bar x^\infty,\bar x^\infty,\bar d^\infty,\bar d^\infty)$, we can solve with respect to $(u,\psi)$ the equation $G(t_0,t_1,x_0,\eta_0,x_1,\eta_1,u,\psi) = 0$: this is possible thanks to the implicit function theorem, because the sensitivity matrix (of operators)
$$
\frac{\partial G}{\partial(u,\psi)} = \begin{pmatrix}
\frac{\partial^2 L}{\partial u^2} & \frac{\partial E}{\partial u}^* \\[1mm]
\frac{\partial E}{\partial u} & 0
\end{pmatrix}
$$
is boundedly invertible at the point $(t_0,t_1,\bar x^\infty,\bar d^\infty,\bar x^\infty,\bar d^\infty,\bar u^\infty,(\bar p_x^\infty,\bar p_y^\infty))$ (the argument here is quite similar to the one used in the proof of Lemma \ref{lem_static_asympt}, which is not a surprise in view of Remark \ref{rem_statictodynamic}: we pass from static to dynamic). This is so, because of the two following facts:
\begin{itemize}
\item The Fr\'echet differential 
$$
\frac{\partial E}{\partial u}(t_0,t_1,\bar x^\infty,\bar d^\infty,\bar u^\infty) : L^\infty([t_0,t_1],\R^m)\rightarrow\R^{n+p}
$$
is surjective: indeed, equivalently (see, e.g., \cite{BonnardChyba,trelat_book}), the linearized control system at $(\bar x^\infty,\bar u^\infty)$ is controllable, because the pair $(A^\infty,\bar B^\infty)$ (defined at the end of Section \ref{sec_prelim_static}) satisfies the Kalman condition.
\item The quadratic form
$$
Q_{t_0,t_1} = \frac{\partial^2 L}{\partial u^2}(t_0,t_1,\bar x^\infty,\bar d^\infty,\bar x^\infty,\bar d^\infty,\bar u^\infty,(\bar p_x^\infty,\bar p_y^\infty))_{\vert \ker \frac{\partial E}{\partial u}(t_0,t_1,\bar x^\infty,\bar d^\infty,\bar u^\infty)}
$$
is positive definite for any $0\leq t_0<t_1$ as a consequence of conjugate point theory (see \cite{AgrachevSachkov,BCT_COCV2007,BonnardChyba,Trelat_JOTA2012}): indeed, the strong Legendre condition $\bar U^\infty\geq \delta I_m$  (see the end of Section \ref{sec_prelim_static}) implies that $Q_{t_0,t_1}>0$ for any $0\leq t_0<t_1$. Note that, usually, the strong Legendre assumption would imply $Q_{t_0,t_1}>0$ only for $t_1-t_0>0$ small enough (see \cite[Proposition 20.2]{AgrachevSachkov}), but here positive-definiteness is valid for any $t_1-t_0>0$, or equivalently, the first conjugate time associated with the trivial trajectory $(x(t),y(t),u(t))=(\bar x^\infty, t\bar d^\infty, \bar u^\infty)$ is equal to $+\infty$. To be more precise, using \cite{AgrachevSachkov,BCT_COCV2007,BonnardChyba}, the matrix of the variational system along this reference trajectory is the constant matrix\footnote{Actually, this matrix is exactly the matrix of the linearized system \eqref{systdeltaT} studied further.}
$$
\begin{pmatrix}
A^\infty & \bar B^\infty (\bar U^\infty)^{-1} (\bar B^\infty)^\top \\
\widetilde W & - (A^\infty)^\top
\end{pmatrix}
\qquad \textrm{with}\qquad \widetilde W = \begin{pmatrix} \bar W^\infty & 0 \\ 0 & 0 \end{pmatrix} .
$$
The variational system coincides with the optimality system of the LQ problem consisting of minimizing the cost functional 
$$
\int_{t_0}^{t_1} \left( \delta x(t))^\top \bar W^\infty \delta x(t) + (\delta u(t))^\top \bar U^\infty \delta u(t) \right) dt
$$
for the control system $\delta\dot x(t) = A^\infty\delta x(t)+\bar B^\infty\delta u(t)$, $\delta x(t_0)=0$, $\delta x(t_1)=0$. According to the classical conjugate point theory, for $t_0$ fixed, the first conjugate time $t_c$ is the infimum of times $t_1>t_0$ such that the minimal cost is $0$ if $t_0<t_1<t_c$ and is $-\infty$ if $t_0<t_c<t_1$. Here, we have $\bar U^\infty>0$ and $\bar W^\infty>0$  (see the end of Section \ref{sec_prelim_static}) and thus the minimal cost is $0$ for every $t_1>t_0$, which implies that $t_c=+\infty$.
\end{itemize}
Hence, by the implicit function theorem, for any $0\leq t_0<t_1$, for any $(x_0,x_1,\eta_0,\eta_1)$ close enough to $(\bar x^\infty,\bar x^\infty,\bar d^\infty,\bar d^\infty)$, there exist $(u,\psi)$ close to $(\bar u^\infty,(\bar p_x^\infty,\bar p_y^\infty))$ such that $G(t_0,t_1,x_0,\eta_0,x_1,\eta_1,u,\psi)=0$, with, moreover, $u\in L^1([t_0,t_1],\Omega_1)$. This means that the trajectory $(x(\cdot),y(\cdot),u(\cdot))$ has a normal extremal lift $(x(\cdot),y(\cdot),p_x(\cdot),p_y,-1,u(\cdot))$ satisfying the extremal equations \eqref{extrem}-\eqref{extremu}, and is such that $x(0)=x_0$, $y(0)=t_0\eta_0$, $x(t_1)=x_1$ and $y(t_1)=t_1\eta_1$. In turn, since $Q_{t_0,t_1}$ is positive definite, it follows from the conjugate point theory (see, e.g., \cite[Theorem 21.3]{AgrachevSachkov}) that $(x(\cdot),y(\cdot),u(\cdot))$ is locally optimal on $[0,t_1]$ (for the $L^\infty$ topology on $u$). Moreover, $t\mapsto(x(t),\frac{y(t)}{\max(1,t)},p_x(t),p_y,u(t))$ converges to $(\bar x^\infty, \bar d^\infty, \bar p_x^\infty, \bar p_y^\infty,\bar u^\infty)$ in $C^0$ topology when $(x_0,x_1,\eta_0,\eta_1)\rightarrow(\bar x^\infty,\bar x^\infty,\bar d^\infty,\bar d^\infty)$.
\end{proof}

\subsubsection{Local linear turnpike estimates}\label{sec_localturn}
In this section, we study the extremal system \eqref{extrem}-\eqref{extremu} locally near the trivial solution $t\mapsto (\bar x^\infty, t\bar d^\infty, \bar p_x^\infty, \bar p_y^\infty)$ of that system.
Throughout the section, we set
\begin{multline}\label{varperturb}
x(t) = \bar x^\infty+\delta x(t),\quad y(t)=t\bar d^\infty+\delta y(t),\\
p_x(t)=\bar p_x^\infty+\delta p_x(t),\quad p_y=\bar p_y^\infty+\delta p_y,
\quad u(t)=\bar u^\infty+\delta u(t),
\end{multline}
and we anticipate that $(\delta x, \delta y, \delta p_x, \delta p_y, \delta u)$ are small, uniformly on $[t_0,t_1]$; in particular, $\delta u$ is  small enough so that, thanks to Assumption \ref{H2_U} (passed to the limit), one can solve \eqref{extremu} by the implicit function theorem and get $u$ as a $C^1$ function of $(x(t),p_x(t),p_y)$. 

In this section, we establish linear turnpike estimates similar to those stated in the theorem for the system \eqref{extrem}-\eqref{extremu}, close enough to $(\bar x^\infty, t\bar d^\infty, \bar p_x^\infty, \bar p_y^\infty)$, by analyzing the linearized system.

Plugging the variables \eqref{varperturb} in \eqref{extremu} and then in \eqref{extrem}, and performing Taylor expansions at the first order, we obtain first
$$
\delta u(t) = (\bar U^\infty)^{-1} \left( \bar H_{ux}^\infty \delta x(t) + (\bar B_1^\infty)^\top \delta p_x(t) + (\bar B_2^\infty)^\top \delta p_y \right) + \mathrm{O}( \Vert\delta x(t)\Vert^2, \Vert\delta p_x(t)\Vert^2, \Vert\delta p_y\Vert^2 ) 
$$
(note that, for instance, $2\Vert\delta x(t)\Vert \Vert\delta p_x(t)\Vert \leq \Vert\delta x(t)\Vert^2 + \Vert\delta p_x(t)\Vert^2$ by the Young inequality), 
and then, recalling that $\delta p_y^T\in\R^p$ is constant,
\begin{equation}\label{systdeltaT}
\begin{split}
\delta\dot x(t) &= A_1^\infty \delta x(t) + \bar B_1^\infty (\bar U^\infty)^{-1} (\bar B_1^\infty)^\top \delta p_x(t) + \bar B_1^\infty (\bar U^\infty)^{-1} (\bar B_2^\infty)^\top \delta p_y + \mathrm{O}(\star) \\
\delta\dot y(t) &= A_2^\infty \delta x(t) + \bar B_2^\infty (\bar U^\infty)^{-1} (\bar B_1^\infty)^\top \delta p_x(t) + \bar B_2^\infty (\bar U^\infty)^{-1} (\bar B_2^\infty)^\top \delta p_y + \mathrm{O}(\star) \\
\delta\dot p_x(t) &= \bar W^\infty \delta x(t) - (A_1^\infty)^\top \delta p_x(t) - (A_2^\infty)^\top \delta p_y + \mathrm{O}(\star) \\
\delta\dot p_y(t) &= 0
\end{split}
\end{equation}
where $\mathrm{O}(\star) = \mathrm{O}( \Vert\delta x(t)\Vert^2, \Vert\delta p_x(t)\Vert^2, \Vert\delta p_y\Vert^2 )$,
which can also be written, in a matrix form, as
\begin{equation}\label{systdeltaT_matrix}
\dot Z(t) =
\begin{pmatrix}
A^\infty & \bar B^\infty (\bar U^\infty)^{-1} (\bar B^\infty)^\top \\
\widetilde W & - (A^\infty)^\top
\end{pmatrix}
Z(t) + \mathrm{O}( \Vert\delta x(t)\Vert^2, \Vert\delta p_x(t)\Vert^2, \Vert\delta p_y\Vert^2 )
\end{equation}
with
$$
Z(t) = \begin{pmatrix} \delta x(t) \\ \delta y(t) \\ \delta p_x(t) \\ \delta p_y \end{pmatrix}, \qquad
A^\infty = \begin{pmatrix} A_1^\infty & 0 \\ A_2^\infty & 0 \end{pmatrix}, \qquad
\widetilde W = \begin{pmatrix} \bar W^\infty & 0 \\ 0 & 0 \end{pmatrix} ,
$$
where $A^\infty$, $\bar B^\infty$ and $\bar W^\infty$  have been defined at the end of Section \ref{sec_prelim_static}.
Here and in what follows, the ``big-O" notation $\mathrm{O}( \Vert\delta x(t)\Vert^2, \Vert\delta p_x(t)\Vert^2, \Vert\delta p_y\Vert^2 )$ stands for a term satisfying: there exist $C>0$ and $\alpha>0$ such that, given any $t\in\R$ satisfying $\max( \Vert\delta x(t)\Vert, \Vert\delta p_x(t)\Vert, \Vert\delta p_y\Vert) \leq \alpha$, we have
\begin{equation}\label{def_O2}
\Vert \mathrm{O}( \Vert\delta x(t)\Vert^2, \Vert\delta p_x(t)\Vert^2, \Vert\delta p_y\Vert^2 ) \Vert 
\leq C \max \left( \Vert\delta x(t)\Vert^2 , \Vert\delta p_x(t)\Vert^2 , \Vert\delta p_y\Vert^2 \right)  .
\end{equation}
The ``little-o" notation $\mathrm{o}( \delta x(t), \delta p_x(t), \delta p_y )$ stands for a term satisfying: for every $\varepsilon>0$ there exists $\eta>0$ such that, given any $t\in\R$ satisfying $\max( \Vert\delta x(t)\Vert, \Vert\delta p_x(t)\Vert, \Vert\delta p_y\Vert) \leq \eta$, we have
\begin{equation*}
\Vert \mathrm{o}( \delta x(t), \delta p_x(t), \delta p_y ) \Vert 
\leq \varepsilon \max \left( \Vert\delta x(t)\Vert , \Vert\delta p_x(t)\Vert , \Vert\delta p_y\Vert \right)  .
\end{equation*}
Of course, a term $\mathrm{O}( \Vert\delta x(t)\Vert^2, \Vert\delta p_x(t)\Vert^2, \Vert\delta p_y\Vert^2 )$ is \emph{a fortiori} a $\mathrm{o}( \delta x(t), \delta p_x(t), \delta p_y )$.

It is important to note that, in the system \eqref{systdeltaT} (equivalently, \eqref{systdeltaT_matrix}), the remainder terms do not involve $\delta y(t)$.

The ``linearized system", i.e., the linearization along $t\mapsto (\bar x^\infty, t\bar d^\infty, \bar p_x^\infty, \bar p_y^\infty)$ of the system \eqref{extrem}-\eqref{extremu} is the system \eqref{systdeltaT_matrix} without the remainder terms. But, in what follows, we do not ignore those remainder terms: they have to be handled carefully.

To this aim, we write the system \eqref{systdeltaT} in the following form, more amenable to its analysis.
Setting
\begin{equation*}
z(t) = \begin{pmatrix} \delta x(t)\\ \delta p_x(t) \end{pmatrix}
\end{equation*}
and defining the matrices
\begin{equation}\label{defM}
M = \begin{pmatrix}
A_1^\infty  & \bar B_1^\infty (\bar U^\infty)^{-1} (\bar B_1^\infty)^\top \\
\bar W^\infty & - (A_1^\infty)^\top 
\end{pmatrix} ,
\qquad
V = \begin{pmatrix}
-\bar B_1^\infty (\bar U^\infty)^{-1} (\bar B_2^\infty)^\top  \\
(A_2^\infty)^\top
\end{pmatrix}  ,
\end{equation}
we write the system \eqref{systdeltaT} as
\begin{eqnarray}
\dot z(t) &=& M z(t) - V \delta p_y + \mathrm{O}(\Vert z(t)\Vert^2,\Vert\delta p_y\Vert^2)  \label{systdelta1} \\
\delta\dot y(t) &=& L z(t) + \bar B_2^\infty (\bar U^\infty)^{-1} (\bar B_2^\infty)^\top \delta p_y + \mathrm{O}(\Vert z(t)\Vert^2,\Vert\delta p_y\Vert^2) \label{systdelta2} 
\end{eqnarray}
where
$$
L = \begin{pmatrix}
A_2^\infty & \bar B_2^\infty (\bar U^\infty)^{-1} (\bar B_1^\infty)^\top
\end{pmatrix} .
$$
Note that, when $\bar H_{ux}=0$, the system \eqref{systdelta1}-\eqref{systdelta2}, without the remainder terms, corresponds exactly to the extremal system associated with the linear-quadratic optimal control problem \eqref{optcont1_LQ}.

We have the following result.

\begin{lemma}\label{lem_local_turnpike}
There exist $\varepsilon_1>0$, 
$C>0$ and $\nu>0$ such that, for all $0\leq t_0< t_1$, 
for every solution $(z(\cdot),\delta y(\cdot),\delta p_y)$ of \eqref{systdelta1}-\eqref{systdelta2} satisfying
\begin{equation}\label{estimdeltar}
\gamma_{t_0,t_1} = \max( \Vert z(t_0)\Vert, \Vert z(t_1)\Vert, \Vert\delta p_y\Vert ) \leq \varepsilon_1, 
\end{equation}
we have
\begin{equation}\label{linear_turnpike_estimates_lemma}
\begin{split}
& 
\Vert z(t)\Vert \leq C \gamma_{t_0,t_1} \left( e^{-\nu (t-t_0)} + e^{-\nu(t_1-t)} \right) + \frac{C}{t_1-t_0} \left( \gamma_{t_0,t_1} + \Vert\delta y(t_1)-\delta y(t_0)\Vert \right) ,  \\
& \Vert \delta y(t)\Vert \leq C\left( \gamma_{t_0,t_1} + \Vert\delta y(t_0)\Vert + \Vert\delta y(t_1)-\delta y(t_0)\Vert \right), \\
& \Vert\delta p_y\Vert \leq \frac{C}{t_1-t_0} \left( \gamma_{t_0,t_1} + \Vert\delta y(t_1)-\delta y(t_0)\Vert \right) ,
\end{split}
\end{equation}
for every $t\in[t_0,t_1]$. 
\end{lemma}

Lemma \ref{lem_local_turnpike} can be seen as a \emph{bootstrap} result: as soon as a solution $(\delta x(\cdot),\delta y(\cdot),\delta p_x(\cdot),\delta p_y)$ of \eqref{systdeltaT} is bounded at its extremities in $\delta x$ and $\delta p_x$ with a sufficiently small bound, then, automatically, this solution enjoys the estimates \eqref{linear_turnpike_estimates_lemma} on $[t_0,t_1]$, which are similar to the linear turnpike estimates stated in Theorem \ref{turnpike_thm}. Note that the constants $\varepsilon_1$, 
$C$ and $\nu$ do not depend on $t_0,t_1$.

\begin{proof}
We have the following spectral property for the matrix $M$:

\begin{itemize}
\item[] \textit{
The matrix $M$ is hyperbolic, i.e., all (complex) eigenvalues of $M$ have a nonzero real part. 
}
\end{itemize}

Let us establish this property.
The fact that a Hamiltonian matrix is hyperbolic under a Kalman assumption is classical (see, e.g., \cite{AK,Kucera}). The proof that we give hereafter is the same as in \cite{TZ} (which, itself, was borrowed from \cite{AndersonKokotovic,WildeKokotovic}). We recall it not only for completeness but also because the precise changes of variables that are performed below are going to be used, further, in an instrumental way.

Let $E_-$ (resp., $E_+$) be the minimal symmetric negative definite (resp., maximal symmetric positive definite) matrix solution of the algebraic Riccati equation 
$$
X A_1^\infty + (A_1^\infty)^\top X + X \bar B_1^\infty (\bar U^\infty)^{-1} (\bar B_1^\infty)^\top X - \bar W^\infty = 0 .
$$
Setting
\begin{equation}\label{def_P}
P = \begin{pmatrix}
I_n & I_n \\ E_- & E_+
\end{pmatrix} ,
\end{equation}
the matrix $P$ is invertible and we have (note that $E_+-E_-$ is invertible)
\begin{equation}\label{invP}
P^{-1} = \begin{pmatrix}
E_-^{-1} E_+ (E_+-E_-)^{-1} E_- & -(E_+-E_-)^{-1} \\
-(E_+-E_-)^{-1} E_- & (E_+-E_-)^{-1}
\end{pmatrix} 
\end{equation}
and
$$
P^{-1} M P = \begin{pmatrix}
M_- & 0 \\
0 & M_+
\end{pmatrix} 
$$
where
$$
M_- = A_1^\infty + \bar B^\infty (\bar U^\infty)^{-1} (\bar B^\infty)^\top E_-, \qquad
M_+ = A_1^\infty + \bar B^\infty (\bar U^\infty)^{-1} (\bar B^\infty)^\top E_+ .
$$
Moreover, subtracting the Riccati equations satisfied by $E_+$ and $E_-$, we have
$$
(E_+-E_-) M_+ + M_-^\top (E_+-E_-) = 0 ,
$$
hence the eigenvalues of $M_+$ are the negative of those of $M_-$, which have negative real parts by the well known Riccati theory (see \cite{AK}). Here, we have used the facts that the pair $(A_1^\infty,\bar B_1^\infty)$ satisfies the Kalman assumption (this is inferred from Assumption \ref{H2_Kalman} and by taking the limit) and that the matrices $\bar W^\infty$ and $\bar U^\infty$ are symmetric positive definite (by taking the limit in Assumptions \ref{H2_U} and \ref{H2_W}).
This proves the claimed spectral property of $M$.

\medskip

Since $M$ is hyperbolic, first of all, it is invertible and then \eqref{systdelta1} can be written as
$$
\frac{d}{dt} \left( z(t) - M^{-1}V\delta p_y \right)  =  M \left( z(t) - M^{-1}V\delta p_y \right)  + \mathrm{O}(\Vert z(t)\Vert^2,\Vert\delta p_y\Vert^2) .
$$
Second, by the proof of Lemma \ref{lem_local_turnpike}, setting
\begin{equation}\label{cdv-+}
z(t) - M^{-1}V\delta p_y = \begin{pmatrix} \delta x(t)\\ \delta p_x(t)\end{pmatrix}- M^{-1}V\delta p_y 
= P \begin{pmatrix} v_-(t)\\ v_+(t)\end{pmatrix} 
= \begin{pmatrix} v_-(t)+v_+(t) \\ E_-v_-(t)+E_+v_+(t) \end{pmatrix} ,
\end{equation}
where $P$ is defined by \eqref{def_P}, and equivalently, thanks to \eqref{invP},
\begin{equation}\label{cdv-+_inv}
\begin{pmatrix} v_-(t)\\ v_+(t)\end{pmatrix} 
= \begin{pmatrix}  E_-^{-1} E_+ (E_+-E_-)^{-1} E_- \, \delta x(t) - (E_+-E_-)^{-1} \, \delta p_x(t) \\ -(E_+-E_-)^{-1} E_- \, \delta x(t) + (E_+-E_-)^{-1} \, \delta p_x(t) \end{pmatrix} - P^{-1} M^{-1}V\delta p_y ,
\end{equation}
(we write these formulas explicitly because we will use them in an instrumental way further)
we have
\begin{equation}\label{EDOv-v+}
\begin{split}
\dot v_-(t) &= M_- v_-(t) + \mathrm{O}(\Vert v_-(t)\Vert^2,\Vert v_+(t)\Vert^2,\Vert\delta p_y\Vert^2) , \\
\dot v_+(t) &= M_+ v_+(t) + \mathrm{O}(\Vert v_-(t)\Vert^2,\Vert v_+(t)\Vert^2,\Vert\delta p_y\Vert^2) .
\end{split}
\end{equation}
We have the following result.

\begin{lemma}\label{lem_expest}
There exist $\varepsilon_1$, 
$C>0$ and $\nu>0$ such that, for all $0\leq t_0\leq t_1$, 
for every solution $(v_-(\cdot),v_+(\cdot),\delta p_y)$ of \eqref{EDOv-v+} on $[t_0,t_1]$ satisfying
\begin{equation}\label{lem_expest_assumption}
\Vert v_-(t_0)\Vert\leq\varepsilon_1, \quad
\Vert v_-(t_1)\Vert\leq\varepsilon_1, \quad
\Vert v_+(t_0)\Vert\leq\varepsilon_1, \quad
\Vert v_+(t_1)\Vert\leq\varepsilon_1 , \quad
\Vert \delta p_y\Vert\leq\varepsilon_1 , 
\end{equation}
we have
\begin{eqnarray}
\Vert v_-(t)\Vert &\leq& C\Vert v_-(t_0)\Vert\, e^{-\nu (t-t_0)} + \mathrm{o}(\Vert v_+(t_1)\Vert\, e^{-\nu (t_1-t)}) + \mathrm{o}(\delta p_y) \label{estimcroisees_1} \\
\Vert v_+(t)\Vert &\leq& C\Vert v_+(t_1)\Vert\, e^{-\nu (t_1-t)} + \mathrm{o}(\Vert v_-(t_0)\Vert\, e^{-\nu (t-t_0)}) + \mathrm{o}(\delta p_y) \label{estimcroisees_2} 
\end{eqnarray}
for every $t\in[t_0,t_1]$. 
\end{lemma}

Lemma \ref{lem_expest} is proved in Section \ref{sec_proof_lem_expest} hereafter. Let us admit it temporarily and finish the proof of Lemma \ref{lem_local_turnpike}.

Using \eqref{cdv-+}, multiplying if necessary $\varepsilon_1$ and $C$ by a positive constant only depending on $P$, $M$, $E_-$ and $E_+$, we infer from the assumption \eqref{estimdeltar} done in Lemma \ref{lem_local_turnpike} that \eqref{lem_expest_assumption} is satisfied
and thus also \eqref{estimcroisees_1} and \eqref{estimcroisees_2} by Lemma \ref{lem_expest}, and therefore
\begin{equation}\label{16:51}
\Vert z(t) - M^{-1}V\delta p_y\Vert \leq C\gamma_{t_0,t_1} \left( e^{-\nu (t-t_0)} + e^{-\nu(t_1-t)} \right) + \mathrm{o}(\delta p_y) \qquad \forall t\in[t_0,t_1]  .
\end{equation}
It remains to estimate the norm of $\delta p_y$.
Writing \eqref{systdelta2} in the form
\begin{equation}\label{eqdoty}
\delta \dot y(t) = L ( z(t) - M^{-1} V \delta p_y ) + R \delta p_y  + \mathrm{o}(z(t),\delta p_y)
\end{equation}
where $R$ is the square matrix of size $p$ defined by
\begin{equation}\label{defRT}
R = L M^{-1}V + \bar B_2^\infty(U^\infty)^{-1}(\bar B_2^\infty)^\top  ,
\end{equation}
integrating in time, using that $\int_{t_0}^{t_1} e^{-\nu t}\, dt\leq\frac{1}{\nu}$, we infer (taking a larger constant $C$ if necessary), using \eqref{16:51}, that 
\begin{equation}\label{deltay2}
\Vert \delta y(t) - \delta y(t_0) - (t-t_0) R\,\delta p_y\Vert\leq C\gamma_{t_0,t_1} + \mathrm{o}((t-t_0)\delta p_y) \qquad \forall t\in[t_0,t_1] 
\end{equation}
and thus
\begin{equation}\label{eq_in_py2}
\Vert R\, \delta p_y \Vert \leq \frac{1}{t_1-t_0} \left( C\gamma_{t_0,t_1} + \Vert\delta y(t_1)-\delta y(t_0)\Vert \right) + \mathrm{o}(\delta p_y) .
\end{equation}
By Lemma \ref{lem_R}, which is established in Section \ref{sec_proof_lem_R}, the matrix $R$ is symmetric positive definite, and thus is invertible. 
Taking a larger constant $C$ if necessary, we infer from \eqref{eq_in_py2} and from Lemma \ref{lem_R} the estimate for $\delta p_y$ given in \eqref{linear_turnpike_estimates_lemma}.
We finally infer from \eqref{16:51} and \eqref{deltay2} the estimates for $z(t)$ and $\delta y(t)$ in \eqref{linear_turnpike_estimates_lemma}.
\end{proof}

\begin{remark}
In \eqref{syst1}-\eqref{syst2}-\eqref{syst12_Omega}-\eqref{mincost1}, the dynamics in $y$ is written as $\dot y(t)=h(x(t),y(t),u(t))$ with $h(x,y,u)=g(x,u)$: the mapping $h$ does not depend on $y$. Our strategy of proof would not apply to a general mapping $h$, depending on $y$. Indeed, otherwise, when linearizing $\dot y(t)=h(x(t),y(t),u(t))$ along the path $(\bar x^\infty,t\bar d^\infty,\bar u^\infty)$, we would get $\delta\dot y(t)=A_2^\infty \delta x(t)+\bar B_2^\infty\delta u(t)+Q(t)\delta y(t)$ where $Q(t)=\frac{\partial h}{\partial y}(\bar x^\infty,t\bar d^\infty,\bar u^\infty)$ depends on $t$ and thus the matrix $M$ defined by \eqref{defM} would as well depend on $t$ (anyway, only through the part in $Q$). The argument of hyperbolicity, then, cannot be applied in general, in particular we think that it may fail whenever the eigenvectors of $Q(t)$ are oscillating too fast. Nevertheless, we think that the technique of proof may be extended to the case where $t\mapsto Q(t)$ is slowly-varying, as in the context of control or stabilization by quasi-static deformation (see \cite{CoronTrelat_SICON2004} and references therein). We leave this issue open.
\end{remark}


\subsubsection{Proof of Lemma \ref{lem_expest}}\label{sec_proof_lem_expest}
Similar estimates have been given in \cite[Section 3.2, (34)]{TZ}, but without the remainder term in $\delta p_y$. The proof uses the method of successive approximations, following the classical Banach fixed point argument, like in the proof of the stable manifold theorem done in \cite[Chapter 2.7, proof of the theorem page 109]{Perko}.

We infer from \eqref{EDOv-v+} that 
\begin{equation*}
\begin{split}
v_-(t) &= e^{(t-t_0)M_-} v_-(t_0) + \int_{t_0}^t e^{(t-s)M_-} r_-(v_-(s),v_+(s),\delta p_y) \, ds \\
v_+(t) &= e^{(t-t_1)M_+} v_+(t_1) - \int_t^{t_1} e^{(t-s)M_+} r_+(v_-(s),v_+(s),\delta p_y) \, ds 
\end{split}
\end{equation*}
where $r_\pm(v_-,v_+,\delta p_y) = \mathrm{O}(\Vert v_-\Vert^2,\Vert v_+\Vert^2,\Vert\delta p_y\Vert^2)$. 
Since $M_-$ and $-M_+$ are Hurwitz, there exist $C,\nu>0$ such that $\Vert e^{\tau M_-}\Vert \leq C e^{-2\nu \tau}$ and $\Vert e^{-\tau M_+}\Vert \leq C e^{-2\nu \tau}$ for every $\tau\geq 0$.
Taking a larger constant $C$ if necessary, using the definition \eqref{def_O2}, there exists $\alpha_0>0$ such that, for every $\alpha\in(0,\alpha_0)$, under the \emph{a priori} assumption that, for every $t\in[t_0,t_1]$, $\max(\Vert v_-(t)\Vert, \Vert v_+(t)\Vert, \Vert\delta p_y\Vert) \leq \alpha$, we have
\begin{equation}\label{doubleyz1430}
\begin{split}
\Vert v_-(t)\Vert &\leq C e^{-2\nu(t-t_0)} \Vert v_-(t_0)\Vert + \alpha\frac{C}{2\nu}\Vert\delta p_y\Vert + \alpha C \int_{t_0}^t e^{-2\nu(t-s)} ( \Vert v_-(s)\Vert + \Vert v_+(s)\Vert ) \, ds \\
\Vert v_+(t)\Vert &\leq C e^{-2\nu(t_1-t)} \Vert v_+(t_1)\Vert + \alpha\frac{C}{2\nu}\Vert\delta p_y\Vert + \alpha C \int_t^{t_1} e^{-2\nu(s-t)} ( \Vert v_-(s)\Vert + \Vert v_+(s)\Vert ) \, ds 
\end{split}
\end{equation}
(we have used that $\int_0^{+\infty} e^{-2\nu\tau}\, d\tau=\frac{1}{2\nu}$).
The reader who is familiar with the proof of the stable manifold theorem can then conclude straightforwardly, by plugging the rough estimates $\Vert v_-(t)\Vert \lesssim C e^{-\nu(t-t_0)} \Vert v_-(t_0)\Vert$ and $\Vert v_+(t)\Vert \lesssim C e^{-\nu(t_1-t)}$ into the integrals and, since the argument goes by fixed point, we know in advance that it is valid to derive in this way the estimates \eqref{estimcroisees_1} and \eqref{estimcroisees_2}. 

\medskip

Hereafter, as required by one of the referees, we give the complete argument. 
Using that $e^{-2\nu s}\leq e^{-\nu s}$ for every $s\geq 0$ (like in the proof of the stable manifold theorem done in \cite[Chapter 2.7, page 109]{Perko}, it is important to let a ``margin" in order to avoid a resonance phenomenon), 
we infer from \eqref{doubleyz1430} that
\begin{eqnarray}
y &\leq& a + \alpha Ky + \alpha Kz \label{inegy} \\
z &\leq& b + \alpha Ly + \alpha Lz \label{inegz}
\end{eqnarray}
where 
$$
y(t) = \Vert v_-(t)\Vert, \qquad z(t) = \Vert v_+(t)\Vert,
$$
$$
a(t)=Ce^{-\nu(t-t_0)}y(t_0)+\alpha\frac{C}{2\nu}\Vert\delta p_y\Vert,
\qquad b(t)=Ce^{-\nu(t_1-t)}z(t_1)+\alpha\frac{C}{2\nu}\Vert\delta p_y\Vert,
$$
and where the linear operators $K,L:C^0([t_0,t_1])\rightarrow C^0([t_0,t_1])$ are defined by
$$
(Kf)(t) = C\int_{t_0}^t e^{-2\nu(t-s)} f(s)\, ds, \qquad(Lf)(t) = C\int_t^{t_1} e^{-2\nu(s-t)} f(s)\, ds \qquad \forall f\in C^0([t_0,t_1]).
$$
The operators $K$ and $L$ are bounded on $C^0([t_0,t_1])$ (we have $\Vert K\Vert\leq \frac{C}{2\nu}$ and the same for $L$) and moreover are positively monotone in the following sense: given any $f_1,f_2\in C^0([t_0,t_1])$,
$$
0\leq f_1\leq f_2 \quad\Rightarrow\quad 0 \leq Kf_1\leq Kf_2\quad\textrm{and}\quad 0 \leq Lf_1\leq Lf_2 .
$$
Taking $\alpha_0$ smaller if necessary so that $0<\alpha_0<\frac{2\nu}{C}$, the operators $\mathrm{id}-\alpha K$ and $\mathrm{id}-\alpha L$ are boundedly invertible, and the operators $(\mathrm{id}-\alpha K)^{-1}$ and $(\mathrm{id}-\alpha L)^{-1}$ are positively monotone. Indeed,
$$
(\mathrm{id}-\alpha K)^{-1} = \sum_{j=0}^{+\infty} \alpha^j K^j 
$$
is a converging sum of positively monotone operators, hence is positively monotone (and the same is true for $(\mathrm{id}-\alpha L)^{-1}$). 

By \eqref{inegz}, we have $(\mathrm{id}-\alpha L)z \leq b+\alpha Ly$ and thus, by positive monotonicity, $z \leq (\mathrm{id}-\alpha L)^{-1}(b+\alpha Ly)$. Plugging this inequality into \eqref{inegy}, using that $\mathrm{id} + (\mathrm{id}-\alpha L)^{-1} \alpha L = (\mathrm{id}-\alpha L)^{-1}$, we get
$$
y \leq a + \alpha K (\mathrm{id}-\alpha L)^{-1}b + \alpha K (\mathrm{id}-\alpha L)^{-1} y
$$
and thus $(\mathrm{id} - U_\alpha) y  \leq a + U_\alpha b$ where $U_\alpha=\alpha K (\mathrm{id}-\alpha L)^{-1}$. The linear operator $U_\alpha$ is bounded on $C^0([t_0,t_1])$, and $\Vert U_\alpha\Vert \leq \frac{\alpha C}{2\nu-\alpha C}$. Taking $\alpha_0$ smaller if necessary so that $0<\alpha_0<\frac{\nu}{C}$, $\mathrm{id} - U_\alpha$ is boundedly invertible and
$$
(\mathrm{id} - U_\alpha)^{-1} 
= \sum_{j=0}^{+\infty} \alpha^j K^j \left( \sum_{k=0}^{+\infty} \alpha^k L^k \right)^j 
$$
is a converging sum, with positive coefficients, of positively monotone operators, hence is positively monotone. Therefore 
$$
y \leq (\mathrm{id} - U_\alpha)^{-1}a + (\mathrm{id} - U_\alpha)^{-1}U_\alpha b 
= a +  (\mathrm{id} - U_\alpha)^{-1}U_\alpha (a+b)
$$
where we have used that $(\mathrm{id} - U_\alpha)^{-1} = \mathrm{id} + (\mathrm{id} - U_\alpha)^{-1}U_\alpha$. Noting that $(\mathrm{id}-\alpha K(\mathrm{id}-\alpha L)^{-1})^{-1}$ and $\alpha K$ commute (this is straightforwardly checked by a series expansion), we even have
$$
(\mathrm{id} - U_\alpha)^{-1}U_\alpha = \alpha K (\mathrm{id}-\alpha(K+L))^{-1} .
$$
The same can be done for $z$. We finally have
\begin{equation*}
\begin{split}
y &\leq a +  \alpha K (\mathrm{id}-\alpha(K+L))^{-1} (a+b) \\
z &\leq b +  \alpha L (\mathrm{id}-\alpha(K+L))^{-1} (a+b) 
\end{split}
\end{equation*}
Since $(\mathrm{id}-\alpha(K+L))^{-1} = \sum_{j=0}^{+\infty} \alpha^j(K+L)^j$, let us estimate the power iterates of $\alpha(K+L)$ applied to $e^{-\nu(t-t_0)}$ and $e^{-\nu(t_1-t)}$. We compute
$$
K e^{-\nu(t-t_0)} \leq \frac{C}{\nu} e^{-\nu(t-t_0)},
\qquad
L e^{-\nu(t-t_0)} \leq \frac{C}{3\nu} e^{-\nu(t-t_0)},
$$
$$
K e^{-\nu(t_1-t)} \leq \frac{C}{3\nu} e^{-\nu(t_1-t)},
\qquad
L e^{-\nu(t_1-t)} \leq \frac{C}{\nu} e^{-\nu(t_1-t)},
$$
hence $\alpha^j(K+L)^je^{-\nu(t-t_0)}\leq (\frac{2C\alpha}{\nu})^j e^{-\nu(t-t_0)}$, and the same for $e^{-\nu(t_1-t)}$, and thus, taking the (convergent) sum and changing slightly constants if necessary, we finally infer (taking $\alpha_0$ small enough) that, given any $\alpha\in(0,\alpha_0)$,
\begin{equation}\label{inegyz1518}
\begin{split}
y(t) &\leq C e^{-\nu(t-t_0)} y(t_0) + 2\alpha \frac{C}{\nu} ( \Vert\delta p_y\Vert + e^{-\nu(t-t_0)}y(t_0) + e^{-\nu(t_1-t)} y(t_1) ) \\
z(t) &\leq C e^{-\nu(t_1-t)} z(t_1) + 2\alpha \frac{C}{\nu} ( \Vert\delta p_y\Vert + e^{-\nu(t-t_0)}y(t_0) + e^{-\nu(t_1-t)} y(t_1) )
\end{split}
\end{equation}
for every $t\in[t_0,t_1]$, under the \emph{a priori} assumption that $\max(y(t), z(t), \Vert\delta p_y\Vert) \leq \alpha$ for every $t\in[t_0,t_1]$.
Finally, we infer from \eqref{inegyz1518} that, if $\max(y(t_0), y(t_1),z(t_0),z(t_1), \Vert\delta p_y\Vert) \leq \alpha / (C+6\frac{C}{\nu}\alpha_0) $, then actually $\max(y(t), z(t), \Vert\delta p_y\Vert) \leq \alpha$ for every $t\in[t_0,t_1]$.
Since this has been done for every $\alpha\in(0,\alpha_0)$, the lemma follows. 

\subsubsection{Proof of Lemma \ref{lem_R}}\label{sec_proof_lem_R}
\begin{lemma}\label{lem_R}
The matrix $R$ defined by \eqref{defRT} is symmetric positive definite, and thus is invertible. 
\end{lemma}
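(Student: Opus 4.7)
The plan is to interpret $R$ as the inverse of the Hessian (with respect to $\theta$) of the value function of a strictly convex quadratic program derived from the limit turnpike-static problem. Concretely, for each $\theta\in\R^p$ I would introduce the auxiliary QP
\[
(\mathcal{Q}_\theta)\qquad \min_{(\xi,v)\in\R^n\times\R^m} -\tfrac{1}{2}\begin{pmatrix}\xi\\ v\end{pmatrix}^{\!\top}\!\begin{pmatrix}\bar H_{xx}^\infty & \bar H_{xu}^\infty\\ \bar H_{ux}^\infty & \bar H_{uu}^\infty\end{pmatrix}\!\begin{pmatrix}\xi\\ v\end{pmatrix}
\quad\text{s.t.}\quad \bar A_1^\infty\xi + \bar B_1^\infty v = 0,\ \ \bar A_2^\infty\xi + \bar B_2^\infty v = \theta,
\]
and check strict convexity and feasibility. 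The Hessian of the objective is positive definite thanks to the limit versions of Assumptions \ref{H2_U} and \ref{H2_W}, via the Schur-complement identity $-\bar H_{xx}^\infty - \bar H_{xu}^\infty(\bar U^\infty)^{-1}\bar H_{ux}^\infty = \bar W^\infty$. Feasibility for every $\theta$, together with uniqueness of the Lagrange multipliers, follows from the surjectivity of the $(n+p)\times(n+m)$ matrix $(\bar A^\infty\ \bar B^\infty)$, obtained by the Hautus test applied to the limit Kalman condition \ref{H2_Kalman}, as remarked in the paper.

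Next I would identify the KKT system of $(\mathcal{Q}_\theta)$ with the algebraic system underlying $R\mu = \theta$. Writing the KKT conditions with multipliers $\lambda\in\R^n$ and $\mu\in\R^p$ for the two linear constraints, the $v$-stationarity yields $v = (\bar U^\infty)^{-1}\bigl(\bar H_{ux}^\infty\xi + (\bar B_1^\infty)^\top\lambda + (\bar B_2^\infty)^\top\mu\bigr)$. Substituting this into the $\xi$-stationarity equation and into the first primal constraint, and using the definitions $A_1^\infty = \bar A_1^\infty + \bar B_1^\infty(\bar U^\infty)^{-1}\bar H_{ux}^\infty$ and $\bar W^\infty = -\bar H_{xx}^\infty - \bar H_{xu}^\infty(\bar U^\infty)^{-1}\bar H_{ux}^\infty$, a direct computation produces exactly the system $Mz_\star = V\mu$ with $z_\star = (\xi,\lambda)^\top$. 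The same substitution turns the second primal constraint into $Lz_\star + \bar B_2^\infty(\bar U^\infty)^{-1}(\bar B_2^\infty)^\top\mu = \theta$, which, upon using $z_\star = M^{-1}V\mu$, is exactly $R\mu = \theta$. Thus the map $\mu\mapsto R\mu$ is precisely the map sending a Lagrange multiplier $\mu$ of $(\mathcal{Q}_\theta)$ to the right-hand side $\theta$ of its second equality constraint.

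Finally I would close with the standard quadratic value-function argument. Strict convexity, feasibility, and uniqueness of multipliers yield a well-defined value function $\mathcal{V}(\theta) = \tfrac12\theta^\top S\theta$ with $S$ symmetric, and the envelope identity $\nabla\mathcal{V}(\theta) = \mu$ combined with $R\mu = \theta$ gives $RS = I_p$. For $\theta\neq 0$ the feasible set of $(\mathcal{Q}_\theta)$ excludes $(0,0)$, so the positive-definite objective is strictly positive on the feasible set, giving $\mathcal{V}(\theta) > 0$; hence $S$ is symmetric positive definite, and therefore so is $R = S^{-1}$. The only delicate step is the algebraic identification of the KKT system of $(\mathcal{Q}_\theta)$ with the pair $Mz_\star = V\mu$ and $R\mu = \theta$; this reduces to the same block manipulations already performed in Section \ref{sec_localturn} to define $M$, $V$, $L$ and $R$.
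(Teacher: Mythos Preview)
Your argument is correct and takes a genuinely different route from the paper's.

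The paper computes $R$ explicitly: it solves $M\begin{pmatrix}C_1\\C_2\end{pmatrix}=V$ in closed form using the invertibility of $A_1\bar W^{-1}A_1^\top+\bar B_1\bar U^{-1}\bar B_1^\top$ (which follows from Kalman), obtains the formula
\[
R = -\bigl(A_1\bar W^{-1}A_2^\top+\bar B_1\bar U^{-1}\bar B_2^\top\bigr)^\top\bigl(A_1\bar W^{-1}A_1^\top+\bar B_1\bar U^{-1}\bar B_1^\top\bigr)^{-1}\bigl(A_1\bar W^{-1}A_2^\top+\bar B_1\bar U^{-1}\bar B_2^\top\bigr)+A_2\bar W^{-1}A_2^\top+\bar B_2\bar U^{-1}\bar B_2^\top,
\]
and then splits $R$ as a sum of two congruences $\tilde A_2(\,\cdot\,)\tilde A_2^\top+\tilde B_2(\,\cdot\,)\tilde B_2^\top$, showing via a Moore--Penrose/Tikhonov argument (the paper's Lemma~\ref{lem_general}) that each bracket is $\succeq 0$. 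This is a purely linear-algebraic, computational proof; as written in the paper it actually only yields $R\succeq 0$, not strict definiteness.

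Your approach is more structural: you recognize $R$ as the inverse of the Hessian of the value function of the parametric QP $(\mathcal{Q}_\theta)$, which is nothing but the second-order expansion of the limit turnpike-static problem \eqref{limitstaticpb} in the parameter $\theta=g(x,u)-\bar d^\infty$. The identification of the reduced KKT system with $Mz_\star=V\mu$ and $Lz_\star+\bar B_2(\bar U)^{-1}\bar B_2^\top\mu=\theta$ is exactly right (I checked the block algebra with the sign conventions of the paper). The payoff is twofold: you get symmetry and \emph{strict} positive definiteness of $R$ in one stroke from $R=S^{-1}$ with $S\succ 0$, and you give a conceptual reason for the result---$R$ measures the sensitivity of the $g$-constraint multiplier, and its positivity reflects the strict convexity of the reduced static problem in the direction of that constraint. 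Your proof is therefore slightly stronger and more illuminating than the paper's, at the cost of invoking the envelope theorem rather than staying at the level of elementary matrix manipulations.
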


This section is devoted to proving Lemma \ref{lem_R}. 
Throughout this section, we do not write the superscript $\infty$ in all matrices, to facilitate the reading. 
Let us first express the matrix $R$, defined by \eqref{defRT}, in a more explicit way.
Recall that
$$
R = \begin{pmatrix} A_2 \bar B_2\bar U^{-1}\bar B_1^\top\end{pmatrix} M^{-1}\begin{pmatrix}-\bar B_1\bar U^{-1}\bar B_2^\top\\ A_2^\top\end{pmatrix}+\bar B_2\bar U^{-1}\bar B_2^\top\qquad\textrm{with}\qquad
M =  \begin{pmatrix} A_1 & \bar B_1\bar U^{-1}\bar B_1^\top \\ \bar W & -A_1^\top \end{pmatrix} .
$$
We set
$$
 \begin{pmatrix} C_1\\ C_2\end{pmatrix} = M^{-1}\begin{pmatrix}-\bar B_1\bar U^{-1}\bar B_2^\top\\ A_2^\top\end{pmatrix}
$$
so that
\begin{equation}\label{10:50}
\begin{split}
A_1C_1+\bar B_1\bar U^{-1}\bar B_1^\top C_2&=-\bar B_1\bar U^{-1}\bar B_2^\top \\
\bar WC_1-A_1^\top C_2&=A_2^\top
\end{split}
\end{equation}
Since $\bar W$ is invertible, we infer from the second equation of \eqref{10:50} that $C_1=\bar W^{-1}A_1^\top C_2+\bar W^{-1}A_2^\top$, and plugging this expression into the first equation of \eqref{10:50} gives
\begin{equation}\label{10:53}
\left( A_1\bar W^{-1}A_1^\top+\bar B_1\bar U^{-1}\bar B_1^\top\right) C_2 = -A_1\bar W^{-1}A_2^\top - \bar B_1\bar U^{-1}\bar B_2^\top
\end{equation}

\begin{lemma}\label{lem_matinv}
The matrix $A_1\bar W^{-1}A_1^\top+\bar B_1\bar U^{-1}\bar B_1^\top$ is invertible.
\end{lemma}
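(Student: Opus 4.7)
The plan is to exhibit $A_1\bar W^{-1}A_1^\top+\bar B_1\bar U^{-1}\bar B_1^\top$ as a sum of two symmetric positive semidefinite matrices and show that the intersection of their kernels is trivial, using the Kalman condition \ref{H2_Kalman} (passed to the limit) via the Hautus test.

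First I would observe that, taking the limit in Assumptions \ref{H2_U} and \ref{H2_W}, both $\bar U$ and $\bar W$ are symmetric positive definite, so $\bar U^{-1}$ and $\bar W^{-1}$ admit symmetric positive definite square roots. Writing
\[
A_1\bar W^{-1}A_1^\top = (A_1\bar W^{-1/2})(A_1\bar W^{-1/2})^\top, \qquad
\bar B_1\bar U^{-1}\bar B_1^\top = (\bar B_1\bar U^{-1/2})(\bar B_1\bar U^{-1/2})^\top,
\]
makes clear that each summand is symmetric positive semidefinite, so their sum is symmetric positive semidefinite. Invertibility therefore reduces to showing that the common kernel is $\{0\}$: if $v\in\R^n$ satisfies $v^\top(A_1\bar W^{-1}A_1^\top+\bar B_1\bar U^{-1}\bar B_1^\top)v=0$, then both quadratic forms vanish, and positive-definiteness of $\bar W^{-1}$ and $\bar U^{-1}$ yield $A_1^\top v=0$ and $\bar B_1^\top v=0$.

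Next I would transfer this to the full system $(\bar A,\bar B)$. Recalling that $A_1=\bar A_1+\bar B_1\bar U^{-1}\bar H_{ux}$, the two equalities $A_1^\top v=0$ and $\bar B_1^\top v=0$ give $\bar A_1^\top v=A_1^\top v-\bar H_{ux}^\top\bar U^{-1}\bar B_1^\top v=0$. Consider the vector $w=(v,0)\in\R^{n+p}$. By the block structure
\[
\bar A^\top=\begin{pmatrix} \bar A_1^\top & \bar A_2^\top \\ 0 & 0 \end{pmatrix},\qquad \bar B^\top=\begin{pmatrix} \bar B_1^\top & \bar B_2^\top \end{pmatrix},
\]
one computes $\bar A^\top w=(\bar A_1^\top v,0)^\top=0$ and $\bar B^\top w=\bar B_1^\top v=0$. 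Thus $w$ is a left eigenvector of $\bar A$ for the eigenvalue $0$ that is orthogonal to the columns of $\bar B$.

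Finally, the Hautus test applied to the Kalman pair $(\bar A,\bar B)$ — whose rank condition at the limit follows from Assumption \ref{H2_Kalman} together with the lower bound $\det(K^T(K^T)^\top)\geq\delta$ which is preserved under $T_k\to+\infty$ — forces $w=0$, hence $v=0$. This establishes positive-definiteness, and invertibility follows immediately. The only subtlety is the passage from the Kalman condition on the extended pair $(\bar A,\bar B)$ to a statement involving only $(A_1,\bar B_1)$: the zero block in the second block-column of $\bar A$ is what makes the embedding $v\mapsto (v,0)$ preserve the eigenvector property, so no extra assumption is needed beyond \ref{H2_Kalman}.
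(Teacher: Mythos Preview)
Your proof is correct and follows essentially the same approach as the paper: both write the matrix as a sum of two positive semidefinite forms, deduce from the vanishing of $v^\top(A_1\bar W^{-1}A_1^\top+\bar B_1\bar U^{-1}\bar B_1^\top)v$ that $A_1^\top v=0$ and $\bar B_1^\top v=0$, and then conclude $v=0$ via the Hautus test at the eigenvalue $0$. The only minor difference is that the paper invokes directly that the pair $(A_1,\bar B_1)$ satisfies the Kalman condition (which it has already noted, since state feedback $A_1=\bar A_1+\bar B_1\bar U^{-1}\bar H_{ux}$ preserves controllability and $(\bar A_1,\bar B_1)$ inherits it from $(\bar A,\bar B)$), whereas you unpack this step explicitly by embedding $v$ into $\R^{n+p}$ and applying Hautus to the full pair $(\bar A,\bar B)$; the content is the same.
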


\begin{proof}
Let $\xi\in\R^n$ be such that $\left( A_1\bar W^{-1}A_1^\top+\bar B_1\bar U^{-1}\bar B_1^\top\right)\xi=0$. Multiplying to the left by $\xi^\top$, we obtain $\Vert \bar W^{-1/2}A_1^\top\xi\Vert^2+\Vert \bar U^{-1/2}\bar B_1^\top\xi\Vert^2=0$ and hence $\xi\in\ker(A_1^\top)\cap\ker(\bar B_1^\top)$. As in the proof of Lemma \ref{lem_local_turnpike}, since the pair $(A_1,\bar B_1)$ satisfies the Kalman condition, the latter intersection is $\{0\}$. The claim follows.
\end{proof}

We infer from \eqref{10:53} and from Lemma \ref{lem_matinv} that
$$
C_2 = - \left( A_1\bar W^{-1}A_1^\top+\bar B_1\bar U^{-1}\bar B_1^\top\right)^{-1} \left( A_1\bar W^{-1}A_2^\top + \bar B_1\bar U^{-1}\bar B_2^\top \right)
$$
and thus
$$
C_1 = - \bar W^{-1}A_1^\top \left( A_1\bar W^{-1}A_1^\top+\bar B_1\bar U^{-1}\bar B_1^\top\right)^{-1} \left( A_1\bar W^{-1}A_2^\top + \bar B_1\bar U^{-1}\bar B_2^\top \right) + \bar W^{-1} A_2^\top .
$$
We conclude that
\begin{multline*}
R = - \left( A_1\bar W^{-1}A_2^\top + \bar B_1\bar U^{-1}\bar B_2^\top \right)^\top \left( A_1\bar W^{-1}A_1^\top+\bar B_1\bar U^{-1}\bar B_1^\top\right)^{-1} \left( A_1\bar W^{-1}A_2^\top + \bar B_1\bar U^{-1}\bar B_2^\top \right) \\
+ A_2\bar W^{-1}A_2^\top + \bar B_2\bar U^{-1}\bar B_2^\top 
\end{multline*}
(note that $R$ is symmetric) that we can write as the sum of two symmetric matrices
\begin{equation*}
\begin{split}
R &= A_2 \bar W^{-1/2} \left( - \bar W^{-1/2} A_1^\top \left( A_1\bar W^{-1}A_1^\top+\bar B_1\bar U^{-1}\bar B_1^\top\right)^{-1} A_1 \bar W^{-1/2} + I_n \right) \bar W^{-1/2} A_2^\top \\
& \quad + \bar B_2 \bar U^{-1/2} \left( - \bar U^{-1/2} \bar B_1^\top \left( A_1\bar W^{-1}A_1^\top+\bar B_1\bar U^{-1}\bar B_1^\top\right)^{-1} \bar B_1 \bar U^{-1/2} + I_m \right) \bar U^{-1/2} \bar B_2^\top \\
&= \tilde A_2 \left( - \tilde A_1^\top \left( \tilde A_1\tilde A_1^\top+ \tilde B_1\tilde B_1^\top\right)^{-1} \tilde A_1 + I_n \right) \tilde A_2^\top 
+ \tilde B_2 \left( - \tilde B_1^\top \left( \tilde A_1\tilde A_1^\top+\tilde B_1\tilde B_1^\top\right)^{-1} \tilde B_1 + I_m \right) \tilde B_2^\top 
\end{split}
\end{equation*}
where we have set $\tilde A_i = A_i\bar W^{-1/2}$ and $\tilde B_i=\bar B_i\bar U^{-1/2}$ for $i=1,2$, 
and we are going to prove hereafter that the above two matrices are symmetric positive definite, so that $R$ is itself symmetric positive definite.

We have the following general result.

\begin{lemma}\label{lem_general}
Let $A$ be an arbitrary real-valued square matrix of size $n$, and let $B$ be an arbitrary real-valued matrix of size $n\times m$, where $n$ and $m$ are arbitrary nonzero integers. If $\ker(A^\top)\cap\ker(B^\top)=\{0\}$ then
\begin{equation}\label{ineqA1}
B^\top \left( AA^\top + BB^\top\right)^{-1} B \preceq I_m
\end{equation}
meaning that the symmetric matrix $I_m - B^\top \left( AA^\top + BB^\top\right)^{-1} B$ is positive semi-definite.
\end{lemma}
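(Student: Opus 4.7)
The plan is to reduce the inequality \eqref{ineqA1} to a standard Schur complement argument on a suitable symmetric block matrix. First I would observe that the kernel hypothesis $\ker(A^\top)\cap\ker(B^\top)=\{0\}$ makes the symmetric matrix $AA^\top+BB^\top$ positive definite, hence invertible: if $v^\top(AA^\top+BB^\top)v=\|A^\top v\|^2+\|B^\top v\|^2=0$ then $v\in\ker(A^\top)\cap\ker(B^\top)=\{0\}$. So the right-hand side of \eqref{ineqA1} is well defined, and the statement reduces to showing that the symmetric $m\times m$ matrix $I_m - B^\top(AA^\top+BB^\top)^{-1}B$ is positive semi-definite.

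Next I would introduce the symmetric block matrix
\[
\Sigma = \begin{pmatrix} AA^\top + BB^\top & B \\[1mm] B^\top & I_m \end{pmatrix}
\]
and apply the Schur complement lemma in two different ways. Taking the Schur complement of the bottom-right block $I_m\succ 0$, we obtain $(AA^\top+BB^\top)-B\,I_m^{-1}B^\top = AA^\top\succeq 0$, so $\Sigma\succeq 0$. Then, taking the Schur complement of the top-left block $AA^\top+BB^\top\succ 0$ (which we just showed is invertible), positivity of $\Sigma$ is equivalent to $I_m - B^\top(AA^\top+BB^\top)^{-1}B\succeq 0$, which is precisely \eqref{ineqA1}.

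There is really no main obstacle here: the only mildly delicate point is the justification that $AA^\top+BB^\top$ is invertible (which uses the kernel assumption exactly once), and then the result is a textbook double application of the Schur complement lemma (see e.g.\ \cite[Appendix A.5.5]{BoydVandenberghe}, as already cited in the paper). An equivalent, even more elementary, route would be to write $C=(AA^\top+BB^\top)^{-1/2}B$, observe from $BB^\top\preceq AA^\top+BB^\top$ that $CC^\top\preceq I_n$, and conclude via the fact that $C^\top C$ and $CC^\top$ share the same nonzero eigenvalues that $C^\top C = B^\top(AA^\top+BB^\top)^{-1}B\preceq I_m$. Either argument fits in a few lines.
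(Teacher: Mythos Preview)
Your proof is correct and is in fact cleaner than the paper's own argument. Both proofs begin identically, using the kernel hypothesis to show that $AA^\top+BB^\top$ is positive definite. From there, however, the approaches diverge. The paper splits into two cases: when $B$ is invertible it argues directly from $AA^\top+BB^\top\succeq BB^\top$ by inverting and conjugating; when $B$ is not invertible it introduces a Tikhonov regularization $\varepsilon I_m$, bounds $B^\top(AA^\top+BB^\top+\varepsilon I_m)^{-1}B$ by $B^\top(BB^\top+\varepsilon I_m)^{-1}B$, and passes to the limit using that the latter converges to $B^\#B\preceq I_m$ via Moore--Penrose pseudo-inverse theory. Your double Schur complement on the block matrix $\Sigma$ sidesteps this case distinction entirely and needs no limiting argument or pseudo-inverse machinery; the alternative spectral route via $C=(AA^\top+BB^\top)^{-1/2}B$ is equally direct. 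Both of your arguments are shorter, avoid the invertible/non-invertible dichotomy, and rely only on elementary facts (the Schur complement lemma, or the shared nonzero spectrum of $CC^\top$ and $C^\top C$). The paper's approach has the minor virtue of being closer in spirit to the na\"ive ``invert and conjugate'' intuition, but your argument is the one most readers would prefer.
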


\begin{proof}
Since $\ker(A^\top)\cap\ker(B^\top)=\{0\}$, the matrix $AA^\top + BB^\top$ is invertible (same argument as in Lemma \ref{lem_matinv}).

When $B$ is invertible, the result is obvious: starting from
$$
AA^\top + BB^\top \succeq BB^\top ,
$$
taking the inverse and multiplying to the left by $B^\top$ and to the right by $B$, we obtain \eqref{ineqA1}.

When $B$ is not invertible, we follow anyway the above reasoning, adding $\varepsilon I_m$ for $\varepsilon>0$ to recover an appropriate invertibility property: starting from
$$
AA^\top + BB^\top + \varepsilon I_m \succeq BB^\top + \varepsilon I_m ,
$$
taking the inverse and multiplying to the left by $B^\top$ and to the right by $B$, we obtain
$$
B^\top \left( AA^\top + BB^\top + \varepsilon I_m\right)^{-1} B \preceq B^\top \left( BB^\top + \varepsilon I_m\right)^{-1} B .
$$
Now, we use the general fact that 
$$
B^\top \left( BB^\top + \varepsilon I_m\right)^{-1} B \underset{\varepsilon\rightarrow 0}{\longrightarrow} B^\#B
$$
which is a consequence of the Tikhonov regularization in the Moore-Penrose pseudo-inverse theory (see \cite{Tikhonov}). Here, $B^\#$ is the Moore-Penrose pseudo-inverse of $B$.
We also note the general fact that $B^\#B\preceq I_m$, because $I_m-B^\#B$ is an orthogonal projection.

Then, taking the limit $\varepsilon\rightarrow 0$ gives \eqref{ineqA1}.
\end{proof}

Noting that $\ker(\tilde A_1^\top)\cap\ker(\tilde B_1^\top)=\{0\}$ (this comes again from the Kalman condition on the pair $(A_1,\bar B_1)$), we infer from Lemma \ref{lem_general} that
$$
\tilde A_1^\top \left( \tilde A_1\tilde A_1^\top+ \tilde B_1\tilde B_1^\top\right)^{-1} \tilde A_1 \preceq I_n
\qquad\textrm{and}\qquad
\tilde B_1^\top \left( \tilde A_1\tilde A_1^\top+\tilde B_1\tilde B_1^\top\right)^{-1} \tilde B_1 \preceq I_m .
$$
Therefore $R\succeq 0$, i.e., $R$ is positive semi-definite.
This concludes the proof of Lemma \ref{lem_R}.

\subsubsection{Uniform neighborhoods: end of the proof of Proposition \ref{prop_sensitivity_dyn}}\label{sec-unifneighb}
To finish the proof of Proposition \ref{prop_sensitivity_dyn}, it remains to prove that one can choose $\varepsilon_0>0$ that does not depend on $t_0,t_1$ whenever $t_1-t_0$ is large enough. This fact will follow by inspecting more finely (and bootstrapping again) the study performed in Section \ref{sec_localturn}. 

Before starting the proof, we note that it suffices to establish Proposition \ref{prop_sensitivity_dyn} for $t_0=0$ and $\eta_0=0$. Indeed, if Proposition \ref{prop_sensitivity_dyn} is true for $t_0=0$ and $\eta_0=0$, then we infer the general case by shifting time and by considering $y(t)-y(t_0\eta_0)$ instead of $y(t)$. Hence, in what follows, we assume without loss of generality that $t_0=0$ and that $y(0)=0$, i.e., $\eta_0=0$.

Given $t_1>0$ arbitrarily fixed, we have proved in Lemma \ref{lem_sensitivity} that there exists $\varepsilon_0>0$ depending on $t_1$ such that, for any $\varepsilon\in(0,\varepsilon_0)$, for any $x_0,x_1\in B(\bar x^\infty,\varepsilon)$ and any $\eta_1\in B(\bar d^\infty,\varepsilon)$, the problem $\mathcal{P}(0,t_1,x_0,x_1,0,\eta_1)$ has a locally optimal solution $(x(\cdot),y(\cdot),p_x(\cdot),p_y(\cdot),u(\cdot))$ on $[0,t_1]$ satisfying \eqref{extrem}-\eqref{extremu}. 
We set $\delta x(t) = x(t)-\bar x^\infty$, $\delta y(t) = y(t)-t\bar d^\infty$, $\delta p_x(t) = p_x(t)-\bar p_x^\infty$, $\delta p_y = p_y-\bar p_y^\infty$ as in \eqref{varperturb}.

Having in mind the \emph{shooting method}, given any $t_1>0$, any $\delta x(0)$ such that $\Vert \delta x(0)\Vert \leq\varepsilon_0$ (recall that $\delta y(0)=0$), we consider the mapping $\Phi_{t_1,\delta x(0)}: \R^n\times\R^p\rightarrow \R^n\times\R^p$ defined (in a neighborhood of $(0,0)$) by
$$
\Phi_{t_1,\delta x(0)}(\delta p_x(0), \delta p_y) = \left( \delta x(t_1), \frac{\delta y(t_1)}{t_1} \right) 
$$
where $(\delta x(\cdot), \delta y(\cdot), \delta p_x(\cdot), \delta p_y)$ is solution of \eqref{systdeltaT}.
The mapping $\Phi_{t_1,\delta x(0)}$ is interpreted as the \emph{exponential mapping} (see \cite{BCT_COCV2007}) associated with the Hamiltonian system \eqref{extrem}-\eqref{extremu}, and it follows from the conjugate point theory that $t_1$ is not a conjugate time for the initial point $(\delta x(0),\delta y(0)=0)$ if and only if the mapping $\Phi_{t_1,\delta x(0)}$ is a local diffeomorphism (see \cite[Proposition 2.9]{BCT_COCV2007} or see \cite[Theorem 21.1]{AgrachevSachkov} in a more general context).

Here, the \emph{shooting problem} that we are going to solve is the following: we will prove that there exists $\varepsilon_0>0$ (to be chosen small enough) and $T_0>0$ (to be chosen large enough) such that, given any $t_1>T_0$, given any $x_0,x_1\in B(\bar x^\infty,\varepsilon_0)$ and given any $y_1^{t_1}\in\R^p$ such that $\frac{y_1^{t_1}}{t_1}\in B(\bar d^\infty,\varepsilon_0)$, there exist $\delta p_x(0)\in\R^n$ and $\delta p_y\in\R^p$ such that 
\begin{equation}\label{shooting_problem}
\Phi_{t_1,\delta x(0)}(\delta p_x(0), \delta p_y) 
= \left( x_1-\bar x^\infty, \frac{y_1^{t_1}}{t_1}-\bar d^\infty \right)
\end{equation}
(this is the shooting problem), i.e., such that the solution $(\delta x(\cdot), \delta y(\cdot), \delta p_x(\cdot), \delta p_y)$ of \eqref{systdeltaT}, of initial condition $(\delta x(0) = x_0-\bar x^\infty, \delta y(0)=0, \delta p_x(0), \delta p_y)$, satisfies $\delta x(t_1) = x_1-\bar x^\infty$ and $\delta y(t_1) = y_1^{t_1}-t_1\bar d^\infty$, and moreover we want to prove that $\Phi_{t_1,\delta x(0)}$ is a local diffeomorphism (and this, we insist, whatever $t_1>0$ large enough may be).

Let us consider again and follows the steps of the proof of Lemma \ref{lem_local_turnpike}. Assuming that $t_1\geq T_0$, setting $M^{-1}V=\begin{pmatrix}H_1 & H_2\end{pmatrix}^\top$ and $P^{-1}M^{-1}V=\begin{pmatrix}H_3 & H_4\end{pmatrix}^\top$, it follows from \eqref{cdv-+} that
\begin{equation}\label{deltaxt1}
\delta x(t_1) = v_-(t_1) + v_+(t_1) + H_1\,\delta p_y, 
\end{equation}
and from \eqref{cdv-+_inv} that
\begin{eqnarray}
v_-(0) &=& E_-^{-1} E_+ (E_+-E_-)^{-1} E_- \, \delta x(0) - (E_+-E_-)^{-1} \, \delta p_x(0) + H_3\,\delta p_y \label{v-0} \\ 
v_+(0) &=& -(E_+-E_-)^{-1} E_- \, \delta x(0) + (E_+-E_-)^{-1} \, \delta p_x(0) + H_4\,\delta p_y \label{v+0} 
\end{eqnarray}
Now, using \eqref{estimcroisees_1} at $t=t_1$ and \eqref{v-0}, we infer 
that
$$
v_-(t_1) =  \mathrm{O}(\delta x(0), \delta p_x(0), \delta p_y) \, e^{-\nu t_1} + \mathrm{o}(v_+(t_1)) + \mathrm{o}(\delta p_y) ,
$$
and combining with \eqref{deltaxt1} we obtain
\begin{multline}\label{v+t1}
v_+(t_1) = \delta x(t_1) - v_-(t_1) - H_1\,\delta p_y \\
= \delta x(t_1) - H_1\,\delta p_y + \mathrm{O}(\delta x(0), \delta p_x(0), \delta p_y) \, e^{-\nu t_1} + \mathrm{o}( \delta x(t_1), \delta p_y )  .
\end{multline}
Since $\delta x(0)=\mathrm{O}(1)$ and since we want to solve $\delta x(t_1)=x_1-\bar x^\infty=\mathrm{O}(1)$, using \eqref{estimcroisees_2} at $t=0$, \eqref{v-0} and \eqref{v+t1}, we must have
$$
v_+(0) = \mathrm{O}(\delta x(t_1),\delta p_y) \, e^{-\nu t_1} + \mathrm{o}(\delta x(0), \delta p_x(0), \delta p_y) ,
$$
and thus using \eqref{v+0}, finally, we find that
\begin{equation}\label{deltapx0}
\delta p_x(0) = E_- \delta x(0) - (E_+-E_-) H_4\,\delta p_y + \mathrm{O}(\delta x(t_1),\delta p_y) \, e^{-\nu t_1} + \mathrm{o}(\delta x(0),\delta p_y) .
\end{equation}

Finally, let us consider the last equation of the shooting problem \eqref{shooting_problem} to be solved, $\delta y(t_1)=y_1^{t_1}-t_1\bar d^\infty$.
Using \eqref{cdv-+}, we rewrite \eqref{eqdoty} as
\begin{equation}\label{newdeltay}
\delta\dot y(t) = L \begin{pmatrix} v_-(t)+v_+(t) \\ E_-v_-(t)+E_+v_+(t) \end{pmatrix} + R \delta p_y  + \mathrm{o}(z(t),\delta p_y) .
\end{equation}
It follows from \eqref{estimcroisees_1}, \eqref{estimcroisees_2}, \eqref{v-0}, \eqref{v+t1} and \eqref{deltapx0} that
\begin{eqnarray}
v_-(t) &=& \mathrm{O}(\delta x(0),\delta p_y,e^{-\nu t_1}) \, e^{-\nu t} + \mathrm{o}(\delta x(t_1),\delta p_y,e^{-\nu t_1}) \, e^{-\nu(t_1-t)} + \mathrm{o}(\delta p_y) ,    \label{newv-} \\
v_+(t) &=&  \mathrm{o}(\delta x(0),\delta p_y,e^{-\nu t_1}) \, e^{-\nu t} + \mathrm{O}(\delta x(t_1),\delta p_y,e^{-\nu t_1}) \, e^{-\nu(t_1-t)} + \mathrm{o}(\delta p_y)  .   \label{newv+}
\end{eqnarray}
Now, we infer from \eqref{newdeltay}, \eqref{newv-} and \eqref{newv+} that
$$
\delta\dot y(t) = R \delta p_y + \mathrm{O}(\delta x(0),\delta x(t_1),\delta p_y,e^{-\nu t_1}) ( e^{-\nu t} + e^{-\nu(t_1-t)} ) + \mathrm{o}(\delta p_y)
$$
and, integrating in time, using that $\delta y(0)=0$ and $\int_0^{t_1} e^{-\nu t}\, dt\leq\frac{1}{\nu}$, noting that $\mathrm{O}(\delta p_y) = \mathrm{o}(t_1\delta p_y)$ for $t_1$ large enough, we finally obtain
$$
\delta y(t_1) = t_1 R \delta p_y + \mathrm{o}(t_1\delta p_y) + \mathrm{O}(\delta x(0),\delta x(t_1),e^{-\nu t_1}) .
$$
Since we want to solve $\delta y(t_1)=y_1^{t_1}-t_1\bar d^\infty$, and since $R$ is invertible by Lemma \ref{lem_R}, we must have
\begin{equation}\label{shoot_deltapy}
\delta p_y = R^{-1} \left( \frac{y^{t_1}}{t_1}-\bar d^\infty \right) + \mathrm{o} \left( \frac{y^{t_1}}{t_1}-\bar d^\infty \right) + \frac{1}{t_1} \mathrm{O}( \delta x(0), \delta x(t_1), e^{-\nu t_1} )
\end{equation}
and, plugging \eqref{shoot_deltapy} into \eqref{deltapx0} we obtain
\begin{equation}\label{deltapx0_new}
\delta p_x(0) = E_- \delta x(0) - (E_+-E_-) H_4 R^{-1} \left( \frac{y^{t_1}}{t_1}-\bar d^\infty \right) + \mathrm{o} \left( \delta x(0), \delta x(t_1), \frac{y^{t_1}}{t_1}-\bar d^\infty, e^{-\nu t_1} \right)  .
\end{equation}
All in all, the shooting problem \eqref{shooting_problem} is equivalent to the equations \eqref{shoot_deltapy} and  \eqref{deltapx0_new}, which can be solved for an $\varepsilon_0$ small enough that is independent of $t_1$, and whose solving shows that the mapping $\Phi_{t_1,\delta,x(0)}$, which maps $(\delta p_x(0),\delta p_y)$ to $(\delta x(t_1),\frac{\delta y(t_1)}{t_1})$, is a local diffeomorphism, whatever $t_1>0$ large enough may be.
In turn, this gives the local uniqueness property stated in \ref{casei}.
This concludes the proof of the proposition.

\begin{remark}
When $\frac{y^{t_1}}{t_1}=\bar d^\infty+\mathrm{o}(1)$ as $t_1\rightarrow+\infty$, passing to the limit $t_1\rightarrow+\infty$ in \eqref{deltapx0_new} gives $\delta p_x(0) = E_- \delta x(0)$, which is the classical relation in algebraic Riccati theory (see, e.g., \cite[Proof of Theorem 4.4.5]{trelat_book}). 
\end{remark}

\subsection{Exploiting dissipativity} \label{sec_exploit_dissip}
Recall that we have considered a sequence $(T_k)_{k\in\N^*}$ of positive real numbers converging to $+\infty$ such that \eqref{limit_tuple} holds, i.e., $(\bar x^\infty,\bar d^\infty,\bar u^\infty,\bar p_x^\infty,\bar p_y^\infty)$ is a closure point of the family $(\bar x^T,\frac{y_1^T}{T},\bar u^T,\bar p_x^T,\bar p_y^T)_{T\geq T_0}$.
Considering the optimal solution $(x^{T_k}(\cdot),y^{T_k}(\cdot),u^{T_k}(\cdot))$ of \eqref{syst1}-\eqref{syst2}-\eqref{terminalconditions_12}-\eqref{syst12_Omega}-\eqref{mincost1}, let us now exploit the strict dissipativity property to derive the following result. 

\begin{lemma}\label{lem_strict1}
Let $\varphi:\N^*\rightarrow\N^*$ be a function such that $\varphi(k)\leq T_k$ for every $k\in\N^*$ and $\varphi(k)\rightarrow+\infty$ as $k\rightarrow+\infty$. Considering a subsequence of $(T_k)_{k\in\N^*}$ if necessary, there exists a measurable subset $C_\varphi\subset[0,1]$ of full Lebesgue measure such that, for every $s\in C_\varphi$,
\begin{eqnarray}
&&
x^{T_k}(\varphi(k)s)\rightarrow\bar x^\infty, \qquad
u^{T_k}(\varphi(k)s)\rightarrow\bar u^\infty, \qquad
\frac{y^{T_k}(\varphi(k)s)}{\varphi(k)}\rightarrow s\bar d^\infty , \label{cvs1}\\
&& x^{T_k}(T_k-\varphi(k)s)\rightarrow\bar x^\infty, \qquad
u^{T_k}(T_k-\varphi(k)s)\rightarrow\bar u^\infty, \qquad
\frac{y^{T_k}(T_k-\varphi(k)s)}{T_k-\varphi(k)s}\rightarrow s\bar d^\infty , \label{cvs2}
\end{eqnarray}
as $k\rightarrow +\infty$.
\end{lemma}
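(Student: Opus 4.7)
The plan is to leverage the strict dissipativity of \ref{H2_dissip} together with the comparison trajectory of \ref{H2_cont} to derive a uniform (in $k$) integral bound
\[
\int_0^{T_k} \alpha\bigl(\|x^{T_k}(t)-\bar x^{T_k}\|,\,\|u^{T_k}(t)-\bar u^{T_k}\|\bigr)\,dt \leq C,
\]
after which a change of variables and the standard passage from $L^1$-to-almost-everywhere convergence will yield \eqref{cvs1}--\eqref{cvs2}.

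To establish the integral bound, I first evaluate the cost of the admissible comparison trajectory $(\tilde x^{T_k},\tilde y^{T_k},\tilde u^{T_k})$: since $\tilde x^{T_k}(t)=\bar x^{T_k}$ and $\tilde u^{T_k}(t)=\bar u^{T_k}$ on the middle interval $[t_1^{T_k},T_k-t_2^{T_k}]$ of length $T_k-O(1)$, and on the two boundary segments of length at most $T_0$ the state and control remain $K$-bounded (so $f^0$ is bounded there), the comparison cost equals $T_k f^0(\bar x^{T_k},\bar u^{T_k})+O(1)$. Optimality of $(x^{T_k},u^{T_k})$ then gives
\[
\int_0^{T_k}\bigl(f^0(x^{T_k}(t),u^{T_k}(t))-f^0(\bar x^{T_k},\bar u^{T_k})\bigr)\,dt\;\leq\;O(1).
\]
Plugging this into the $x$-strict dissipativity inequality \eqref{dissip} on $[0,T_k]$ and using that $S^{T_k}$ is uniformly bounded on $M_0\cup M_1$ (assumption \ref{H2_dissip}), the claimed bound follows.

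For \eqref{cvs1}, the change of variables $t=\varphi(k)s$ on $[0,\varphi(k)]$ yields
\[
\int_0^1 \alpha\bigl(\|x^{T_k}(\varphi(k)s)-\bar x^{T_k}\|,\,\|u^{T_k}(\varphi(k)s)-\bar u^{T_k}\|\bigr)\,ds\;\leq\;\frac{C}{\varphi(k)}\;\to\;0,
\]
so a subsequence of these nonnegative $L^1([0,1])$ functions converges to $0$ almost everywhere on a full-measure set $C_\varphi^{(1)}\subset[0,1]$. Since $\alpha$ is of class $\mathcal{K}$ and $(x^{T_k},u^{T_k})$ is $K$-bounded, this forces $x^{T_k}(\varphi(k)s)\to\bar x^\infty$ and $u^{T_k}(\varphi(k)s)\to\bar u^\infty$ on $C_\varphi^{(1)}$. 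For the $y$-component, I integrate $\dot y=g(x,u)$, change variables $\tau=\varphi(k)\sigma$, and divide by $\varphi(k)$ to obtain
\[
\frac{y^{T_k}(\varphi(k)s)}{\varphi(k)}\;=\;\frac{y_0}{\varphi(k)}+\int_0^s g\bigl(x^{T_k}(\varphi(k)\sigma),u^{T_k}(\varphi(k)\sigma)\bigr)\,d\sigma;
\]
continuity of $g$ combined with the pointwise a.e.\ convergences just obtained, together with uniform boundedness of $g$ on $K$-compacts, lets dominated convergence send the integrand to $g(\bar x^\infty,\bar u^\infty)=\bar d^\infty$ (by \eqref{lagbarinfty}) and the integral to $s\bar d^\infty$, while $y_0/\varphi(k)\to 0$. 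The statement \eqref{cvs2} is obtained by the entirely symmetric procedure applied on $[T_k-\varphi(k),T_k]$ via $t=T_k-\varphi(k)s$: the integral bound gives, on a further full-measure subset $C_\varphi^{(2)}$ (after one more subsequence extraction), the a.e.\ convergences for $x^{T_k}$ and $u^{T_k}$; and integrating $g$ backwards from the endpoint condition $y^{T_k}(T_k)=y_1^{T_k}$ and using $y_1^{T_k}/T_k\to\bar d^\infty$ gives the corresponding $y$-limit by dominated convergence. Setting $C_\varphi:=C_\varphi^{(1)}\cap C_\varphi^{(2)}$ concludes.

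The main obstacle is the very first step, namely verifying that the comparison trajectory from \ref{H2_cont} is actually \emph{admissible} for the full optimal control problem \eqref{syst1}-\eqref{syst2}-\eqref{terminalconditions_12}, so that it can legitimately bound the optimal cost; in particular one must arrange $\tilde y^{T_k}(0)=y_0$ and $\tilde y^{T_k}(T_k)=y_1^{T_k}$, which, given the specified structure of $\tilde x^{T_k}$ on the middle interval where $g(\bar x^{T_k},\bar u^{T_k})=(y_1^{T_k}-y_0)/T_k$, reduces to matching an $O(1)$ increment from the two endpoint arcs. This matching, together with carefully tracking that the endpoint contributions to $\int f^0$ stay $O(1)$ uniformly in $k$, is the only technical point; once granted, the remaining $L^1$/a.e.\ analysis is routine.
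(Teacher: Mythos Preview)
Your approach is correct and follows essentially the same route as the paper: bound the optimal cost by that of the comparison trajectory from Assumption~\ref{H2_cont}, feed this into the strict dissipativity inequality to force $L^1$-smallness of $\alpha(\|x^{T_k}-\bar x^{T_k}\|,\|u^{T_k}-\bar u^{T_k}\|)$, change variables $t=\varphi(k)s$, extract an a.e.-convergent subsequence, and then integrate $\dot y=g(x,u)$ with dominated convergence. The paper applies the dissipativity inequality on the subinterval $[0,\varphi(k)]$ and argues by contradiction, whereas you apply it once on the full interval $[0,T_k]$ to obtain $\int_0^{T_k}\alpha\le C$ directly and then restrict; your variant is slightly cleaner because the storage terms $S^{T_k}(x^{T_k}(0))$, $S^{T_k}(x^{T_k}(T_k))$ are automatically controlled uniformly by Assumption~\ref{H2_dissip} at the true endpoints in $M_0,M_1$, while the paper's version also needs $S^{T_k}$ bounded at the interior point $x^{T_k}(\varphi(k))$. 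The obstacle you flag---that the comparison trajectory must actually satisfy the $y$-terminal conditions to be admissible for \eqref{syst1}--\eqref{syst2}--\eqref{terminalconditions_12}---is genuine and is handled only implicitly in the paper; it is meant to be part of the content of Assumption~\ref{H2_cont}.
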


Of course, this lemma can be applied with $\varphi(k)=T_k$, but as we are going to see it will be of interest to apply it with functions $\varphi$ such that $\varphi(k)=\mathrm{o}(T_k)$. A difficulty will arise from the fact that the full Lebesgue measure set $C_\varphi$ depends on $\varphi$.

\begin{proof}
The partial $x$-strict dissipativity inequality \eqref{dissip} given by Assumption \ref{H2_dissip}, applied to $(x^{T_k}(\cdot),u^{T_k}(\cdot))$ on the time interval $[0,\varphi(k)]$, implies that
\begin{multline}\label{x-strict_dissip}
f^0(\bar x^{T_k},\bar u^{T_k}) \leq \frac{1}{\varphi(k)}\int_0^{\varphi(k)} f^0(x^{T_k}(t),u^{T_k}(t))\, dt + \frac{S^{T_k}(x^{T_k}(0))-S^{T_k}(x^{T_k}(\varphi(k)))}{\varphi(k)} \\
- \frac{1}{\varphi(k)} \int_0^{\varphi(k)} \alpha\left( \Vert x^{T_k}(t)-\bar x^{T_k}\Vert, \Vert u^{T_k}(t)-\bar u^{T_k}\Vert \right) dt .
\end{multline}
Let us establish \eqref{cvs1}. To establish \eqref{cvs2}, the method is the same by replacing the time interval $[0,\varphi(k)]$ with $[T_k-\varphi(k),T_k]$ in \eqref{x-strict_dissip}.

Let us prove that $\frac{1}{\varphi(k)} \int_0^{\varphi(k)} \alpha\left( \Vert x^{T_k}(t)-\bar x^{T_k}\Vert, \Vert u^{T_k}(t)-\bar u^{T_k}\Vert \right) dt \rightarrow 0$ as $k\rightarrow +\infty$. 
By contradiction, assume that there exists $\eta>0$ and a (sub)sequence of integers $k\rightarrow+\infty$ such that 
\begin{equation}\label{costeta}
\frac{1}{\varphi(k)} \int_0^{\varphi(k)} \alpha\left( \Vert x^{T_k}(t)-\bar x^{T_k}\Vert, \Vert u^{T_k}(t)-\bar u^{T_k}\Vert \right) dt \geq\eta
\end{equation}
for every $k$ large enough.
The cost on the time interval $[0,\varphi(k)]$ of the trajectory $(\tilde x^{T_k}(\cdot),\tilde y^{T_k}(\cdot))$ solution of \eqref{syst1}-\eqref{syst2} given by Assumption \ref{H2_cont} is (by splitting the time interval $[0,\varphi(k)]$ and thus the integral with respect to the bounded times $t_1^{T_k}$, $t_2^{T_k}$ given by that assumption)
$$
\frac{1}{\varphi(k)}\int_0^{\varphi(k)} f^0(\tilde x^{T_k}(t),\tilde u^{T_k}(t))\, dt 
= f^0(\bar x^{T_k},\bar u^{T_k}) + \mathrm{o}(1)
$$
as $k\rightarrow+\infty$. 
Hence, for $k$ large enough, using \eqref{x-strict_dissip} and \eqref{costeta}, we have obtained a trajectory solution of \eqref{syst1}-\eqref{syst2} whose cost is (strictly) less than the cost of the optimal trajectory $(x^{T_k}(\cdot),y^{T_k}(\cdot),u^{T_k}(\cdot))$: this is a contradiction.

Using the change of variable $t=\varphi(k)s$, we have therefore proved that 
$$
\lim_{k\rightarrow+\infty}
\int_0^1 \alpha\left( \Vert x^{T_k}(\varphi(k)s)-\bar x^{T_k}\Vert, \Vert u^{T_k}(\varphi(k)s)-\bar u^{T_k}\Vert \right) ds = 0 .
$$ 
By the converse of the Lebesgue dominated theorem, taking a subsequence if necessary, for almost every $s\in[0,1]$, $x^{T_k}(\varphi(k)s)-\bar x^{T_k}\rightarrow 0$ and $u^{T_k}(\varphi(k)s)-\bar u^{T_k}\rightarrow 0$ as $k\rightarrow +\infty$, and thus $x^{T_k}(\varphi(k)s)\rightarrow\bar x^\infty$ and $u^{T_k}(\varphi(k)s)\rightarrow\bar u^\infty$ as $k\rightarrow +\infty$.
It remains to prove that $\frac{y^{T_k}(\varphi(k)s)}{\varphi(k)}\rightarrow s\bar d^\infty$.
We set, for almost every $s\in[0,1]$,
$$
\hat x_k(s) = x^{T_k}(\varphi(k)s), \qquad \hat y_k(s) =  y^{T_k}(\varphi(k)s), \qquad\hat u_k(s) = u^{T_k}(\varphi(k)s)  .
$$
We already know that $(\hat x_k(\cdot),\hat u_k(\cdot))$ converges almost everywhere to $(\bar x^\infty,\bar u^\infty)$.
Moreover, we have
$$
\frac{1}{\varphi(k)} \frac{d}{ds} \hat y_k(s) = g(\hat x_k(s),\hat u_k(s)),\qquad \frac{1}{\varphi(k)}\hat y_k(0)=\frac{y_0}{\varphi(k)},
$$
and thus $\frac{1}{\varphi(k)}\hat y_k(s)\rightarrow\bar y(s)$ almost everywhere on $[0,1]$ with $\bar y(s)=s\, g(\bar x^\infty,\bar u^\infty)$ and $g(\bar x^\infty,\bar u^\infty)=\bar d^\infty$.
The lemma is proved.
\end{proof}

\paragraph{First improvement by bootstrap.}
Lemma \ref{lem_strict1} alone does not imply that, for instance, $t\mapsto x^{T_k}(t)$ converges to $\bar x^\infty$ uniformly on $[\varepsilon,T-\varepsilon]$ for any $\varepsilon>0$ fixed (one could a priori have ``sliding bumps"). This much stronger convergence property is however true and is obtained by bootstrapping again with the estimates \eqref{linear_turnpike_estimates_prop} given by Proposition \ref{prop_sensitivity_dyn}.
The main restriction in Lemma \ref{lem_strict1} comes from the full Lebesgue measure subset $C_\varphi$ depending on $\varphi$; we start with the following lemma, showing that we can take $s=1$ in the convergence of $x^{T_k}(\cdot)$, independently of $\varphi$.

\begin{lemma}\label{lemsansC}
Let $\varphi:\N^*\rightarrow\N^*$ be a function such that $\varphi(k)\leq T_k/2$ for every $k\in\N^*$ and $\varphi(k)\rightarrow+\infty$ as $k\rightarrow+\infty$. Considering a subsequence of $(T_k)_{k\in\N^*}$ if necessary, we have
\begin{equation*}
\begin{split}
& x^{T_k}(\varphi(k))\rightarrow\bar x^\infty, \qquad x^{T_k}(T_k-\varphi(k))\rightarrow\bar x^\infty, \\
& p_x^{T_k}(\varphi(k))\rightarrow\bar p_x^\infty, \qquad p_x^{T_k}(T_k-\varphi(k))\rightarrow\bar p_x^\infty ,
\end{split}
\end{equation*}
as $k\rightarrow+\infty$.
\end{lemma}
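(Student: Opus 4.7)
The plan is to bootstrap the almost-everywhere convergence of Lemma~\ref{lem_strict1} with the uniform linear-turnpike estimates of Proposition~\ref{prop_sensitivity_dyn}, by bracketing the times $\varphi(k)$ and $T_k-\varphi(k)$ between two endpoints at which convergence to the turnpike is already available.

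First, I would apply Lemma~\ref{lem_strict1} with $\psi=\varphi$ (extracting a further subsequence of $(T_k)$), pick some $s_0\in C_\varphi\cap(0,1)$, and set
\begin{equation*}
t_0^k=\varphi(k)\,s_0, \qquad t_1^k=T_k-\varphi(k)\,s_0.
\end{equation*}
Using $\varphi(k)\leq T_k/2$, one checks that $t_0^k\to+\infty$, $T_k-t_1^k\to+\infty$, $t_1^k-t_0^k\geq T_k(1-s_0)\to+\infty$, and that $\varphi(k)$ and $T_k-\varphi(k)$ both lie in $[t_0^k,t_1^k]$, with each of the four distances $\varphi(k)-t_0^k$, $t_1^k-\varphi(k)$, $(T_k-\varphi(k))-t_0^k$, $t_1^k-(T_k-\varphi(k))$ tending to $+\infty$. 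Lemma~\ref{lem_strict1} then yields, along that subsequence,
\begin{equation*}
x^{T_k}(t_j^k)\to\bar x^\infty, \qquad \eta_j^k:=\frac{y^{T_k}(t_j^k)}{t_j^k}\to\bar d^\infty \qquad (j=0,1),
\end{equation*}
the second convergence at $t_1^k$ being a direct consequence of $y^{T_k}(t_1^k)=y_1^{T_k}-\int_{t_1^k}^{T_k}g(x^{T_k},u^{T_k})$, of the fact that $y_1^{T_k}/T_k\to\bar d^\infty$, and of dominated convergence using the $K$-bound and the a.e.\ convergence of $g(x^{T_k},u^{T_k})$ to $\bar d^\infty$ on $[t_1^k,T_k]$.

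Second, for any fixed $\varepsilon\in(0,\varepsilon_0]$ (with $\varepsilon_0$ given by Proposition~\ref{prop_sensitivity_dyn}), the hypotheses of that proposition are eventually satisfied on $[t_0^k,t_1^k]$. The restriction of the globally optimal tuple $(x^{T_k},y^{T_k},u^{T_k})$ to $[t_0^k,t_1^k]$ is a feasible, globally (hence locally) optimal solution of the auxiliary problem $\mathcal{P}(t_0^k,t_1^k,x^{T_k}(t_0^k),x^{T_k}(t_1^k),\eta_0^k,\eta_1^k)$; by the local uniqueness in part~\ref{casei} of Proposition~\ref{prop_sensitivity_dyn} it must coincide with the solution furnished by that proposition, and in particular it obeys the estimates \eqref{linear_turnpike_estimates_prop}.

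Third, I would plug $t=\varphi(k)$ and $t=T_k-\varphi(k)$ into \eqref{linear_turnpike_estimates_prop} and show that each summand on the right-hand side vanishes as $k\to+\infty$. The exponential contributions decay because the four relevant distances to $t_0^k$ and $t_1^k$ all blow up. For the $1/(t_1^k-t_0^k)$ coefficient, the controlling $y$-discrepancy is
\begin{equation*}
\bigl\|y^{T_k}(t_1^k)-y^{T_k}(t_0^k)-(t_1^k-t_0^k)\bar d^\infty\bigr\|=\bigl\|t_1^k(\eta_1^k-\bar d^\infty)-t_0^k(\eta_0^k-\bar d^\infty)\bigr\|\leq T_k\bigl(\|\eta_0^k-\bar d^\infty\|+\|\eta_1^k-\bar d^\infty\|\bigr),
\end{equation*}
and $t_1^k-t_0^k\geq(1-s_0)T_k$, so this divided contribution is $o_k(1)$. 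Hence $\|x^{T_k}(\varphi(k))-\bar x^\infty\|+\|p_x^{T_k}(\varphi(k))-\bar p_x^\infty\|\to 0$, and the analogous conclusion at $T_k-\varphi(k)$ follows identically.

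The main obstacle is conceptual rather than computational: Lemma~\ref{lem_strict1} only controls the trajectory along an almost-every-$s$ set of times, and this information cannot be refined to cover the specific deterministic time $\varphi(k)$ without an extra ingredient. The bootstrap works precisely because Proposition~\ref{prop_sensitivity_dyn} is uniform in $t_0,t_1$ for $t_1-t_0$ large (the delicate point established in Section~\ref{sec-unifneighb}), so that two bracketing good times are enough to pin down the whole sandwiched interval, including $\varphi(k)$ itself.
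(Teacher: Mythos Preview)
Your proposal is correct and follows essentially the same strategy as the paper's own proof: bracket $\varphi(k)$ and $T_k-\varphi(k)$ by two ``good'' times $t_0^k=\varphi(k)s_0$ and $t_1^k=T_k-\varphi(k)s_1$ obtained from Lemma~\ref{lem_strict1}, identify the restriction of the global optimum with the auxiliary optimum via Proposition~\ref{prop_sensitivity_dyn} and the dynamic programming principle, and then read off the convergence at the interior times from the linear turnpike estimates. The only cosmetic differences are that the paper allows two distinct parameters $s_0,s_1\in(0,1/4)$ while you reuse a single $s_0\in(0,1)$, and that the paper first invokes item~\ref{caseii} of Proposition~\ref{prop_sensitivity_dyn} to obtain closeness of the adjoint at the endpoints and then applies Lemma~\ref{lem_local_turnpike}, whereas you go straight to the estimates~\eqref{linear_turnpike_estimates_prop} of item~\ref{caseiii}; both routes are equivalent since those estimates are precisely what Lemma~\ref{lem_local_turnpike} produces.
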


\begin{proof}
By Lemma \ref{lem_strict1}, there exist $s_0,s_1\in(0,1/4)$ such that
\begin{eqnarray*}
&&
x^{T_k}(\varphi(k)s_0)\rightarrow\bar x^\infty, \qquad
\frac{y^{T_k}(\varphi(k)s_0)}{\varphi(k)s_0}\rightarrow \bar d^\infty , \\
&& x^{T_k}(T_k-\varphi(k)s_1)\rightarrow\bar x^\infty, \qquad
\frac{y^{T_k}(T_k-\varphi(k)s_1)}{T_k-\varphi(k)s_1}\rightarrow \bar d^\infty 
\end{eqnarray*}
as $k\rightarrow +\infty$. Given any $k\in\N^*$, setting
\begin{equation*}
\begin{split}
& t_0^k=\varphi(k)s_0, \qquad t_1^k = T_k-\varphi(k)s_1, \\
& x_0^k=x^{T_k}(t_0), \qquad x_1^k=x^{T_k}(t_1),
\qquad \eta_0^k=\frac{y^{T_k}(t_0)}{t_0}, \qquad \eta_1^k=\frac{y^{T_k}(t_1)}{t_1} ,
\end{split}
\end{equation*}
by the above convergence properties, for $k$ large enough we have $x_0^k\simeq\bar x^\infty$, $x_1^k\simeq\bar x^\infty$, $\eta_0^k\simeq\bar d^\infty$ and $\eta_1^k\simeq\bar d^\infty$, and we also note that $t_1^k-t_0^k = T_k-\varphi(k)(s_0+s_1) \rightarrow+\infty$ as $k\rightarrow +\infty$.
Hence, for $k$ large enough we have $x_0^k,x_1^k\in B(\bar x^\infty,\varepsilon_0)$ and $\eta_0^k,\eta_1^k\in B(\bar d^\infty,\varepsilon_0)$, with the notations of Proposition \ref{prop_sensitivity_dyn}: this is at this step that we use in a crucial way the fact, established in Proposition \ref{prop_sensitivity_dyn}, that $\varepsilon_0$ does not depend on $t_0,t_1$ whenever $t_1-t_0$ is large enough.

Being the restriction of an optimal solution, by the dynamic programming principle the triple $(x^{T_k}(\cdot),y^{T_k}(\cdot),u^{T_k}(\cdot))$ restricted to the time interval $[t_0^k,t_1^k]$ must therefore be the unique locally optimal solution of the auxiliary optimal control problem $\mathcal{P}(t_0^k,t_1^k,x_0^k,x_1^k,\eta_0^k,\eta_1^k)$ considered in Section \ref{sec_auxiliary}.

 It follows from Proposition \ref{prop_sensitivity_dyn} that, on the time interval $[t_0^k,t_1^k] = [\varphi(k)s_0, T_k-\varphi(k)s_1]$, the optimal solution $(x^{T_k}(\cdot),y^{T_k}(\cdot),u^{T_k}(\cdot))$ has a normal extremal lift and that $u^{T_k}(\cdot)$ is $C^1$. But, by Assumption \ref{H2_unique1}, $(x^{T_k}(\cdot),y^{T_k}(\cdot),u^{T_k}(\cdot))$ has a unique extremal lift, which is moreover normal, with adjoint vectors $p_x^{T_k}(\cdot)$ and $p_y^{T_k}$. Hence, these adjoint vectors must coincide with those given by Proposition \ref{prop_sensitivity_dyn}. Therefore, still by Proposition \ref{prop_sensitivity_dyn}, if $k$ is large enough then we have also $p_x^{T_k}(\varphi(k)s_0)\simeq\bar p_x^\infty$, $p_x^{T_k}(T_k-\varphi(k)s_1)\simeq\bar p_x^\infty$ and $p_y^{T_k}\simeq\bar p_y^\infty$. Note that the latter fact, on the adjoint vectors, is far from obvious and follows from the quite difficult Proposition \ref{prop_sensitivity_dyn}, adequately combined with the previous analysis.

At this step, 
we have therefore obtained that, for every $k$ large enough,
\begin{equation*}
\begin{split}
&\Vert x^{T_k}(\varphi(k)s_0)-\bar x^\infty\Vert\leq\varepsilon_0, \qquad
\Vert x^{T_k}(T_k-\varphi(k)s_1)-\bar x^\infty\Vert\leq\varepsilon_0, \\[1mm]
&\Vert p_x^{T_k}(\varphi(k)s_0)-\bar p_x^\infty\Vert\leq\varepsilon_0, \qquad
\Vert p_x^{T_k}(T_k-\varphi(k)s_1)-\bar p_x^\infty\Vert\leq\varepsilon_0 . 
\end{split}
\end{equation*}
Hence, applying 
Proposition \ref{prop_sensitivity_dyn} on the time interval $[\varphi(k)s_0,T_k-\varphi(k)s_1]$, we infer from \eqref{linear_turnpike_estimates_prop} (here, we bootstrap again) that
\begin{multline*}
\Vert x^{T_k}(t)-\bar x^\infty\Vert + \Vert p_x^{T_k}(t)-\bar p_x^\infty\Vert
\leq \ C\varepsilon_0\left( e^{-\nu(t-\varphi(k)s_0)} + e^{-\nu(T_k-\varphi(k)s_1-t)} \right) \\
+ \frac{C}{T_k-\varphi(k)(s_0+s_1)} \Big( \varepsilon_0 + \Vert y^{T_k}(T_k-\varphi(k)s_1) - y^{T_k}(\varphi(k)s_0) - (T_k-\varphi(k)(s_0+s_1)) \bar d^\infty\Vert \Big)
\end{multline*}
for every $t\in[\varphi(k)s_0,T_k-\varphi(k)s_1]$.
The conclusion stated in the lemma now easily follows.
\end{proof}

Of course, at the end of the above proof, we could already infer a stronger conclusion than that stated in Lemma \ref{lemsansC}. But before that, we are first going to establish the following result, providing an additional improvement. 

\begin{lemma}\label{lem_tau0tau1}
For every $\varepsilon\in(0,1)$ there exist $\tau_0,\tau_1>0$ and $k_0\in\N^*$ such that
\begin{equation*}
\begin{split}
& \Vert x^{T_k}(\tau_0)-\bar x^\infty\Vert < \varepsilon, \qquad \Vert x^{T_k}(T_k-\tau_1)-\bar x^\infty\Vert < \varepsilon , \\ 
& \Vert p_x^{T_k}(\tau_0)-\bar p_x^\infty\Vert < \varepsilon, \qquad \Vert p_x^{T_k}(T_k-\tau_1)-\bar p_x^\infty\Vert < \varepsilon , 
\end{split}
\end{equation*}
for every $k\geq k_0$. 
\end{lemma}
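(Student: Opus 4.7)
I combine a uniform dissipativity-based $L^1$ control on how much the optimal trajectory strays from the turnpike with the local linear turnpike estimate of Proposition~\ref{prop_sensitivity_dyn}, the link between the two being provided by the dynamic programming principle.

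\medskip
\emph{Step~1 (uniform dissipativity integral).} By Assumption~\ref{H2_cont}, the admissible trajectory $\tilde x^{T_k}$ coincides with the turnpike on $[t_1^{T_k},T_k-t_2^{T_k}]$ with $t_i^{T_k}\le T_0$ and is $K$-bounded on the two endpoint arcs, so its cost is at most $T_k f^0(\bar x^{T_k},\bar u^{T_k})+C_0$ with $C_0$ independent of $k$; the same holds for the optimal cost by minimality. Combining this upper bound with the partial $x$-strict dissipativity inequality~\eqref{dissip} on $[0,T_k]$ and the uniform boundedness of $S^{T_k}$ on $M_0\cup M_1$ from Assumption~\ref{H2_dissip} gives
$$
\int_0^{T_k}\alpha\bigl(\|x^{T_k}(t)-\bar x^{T_k}\|,\|u^{T_k}(t)-\bar u^{T_k}\|\bigr)\,dt\le C_1,
$$
with $C_1$ independent of $k$. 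By a Chebyshev-type estimate, for any $\varepsilon'>0$ the ``bad set''
$B_k^{\varepsilon'}:=\{t\in[0,T_k]\ :\ \|x^{T_k}(t)-\bar x^{T_k}\|+\|u^{T_k}(t)-\bar u^{T_k}\|\ge\varepsilon'\}$
has Lebesgue measure at most some $\mu(\varepsilon')$ independent of $k$.

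\medskip
\emph{Step~2 (bounded-time good points).} Given the target $\varepsilon\in(0,1)$, I fix $\varepsilon'\in(0,\varepsilon_0]$ small enough, where $\varepsilon_0$ is from Proposition~\ref{prop_sensitivity_dyn}. Using Lipschitz continuity of $g$ on $K$-bounded sets, on the complement of $B_k^{\varepsilon'}$ one has $\|g(x^{T_k},u^{T_k})-g(\bar x^{T_k},\bar u^{T_k})\|\le L\varepsilon'$. Integrating $\dot y^{T_k}=g(x^{T_k},u^{T_k})$ and splitting on the good and bad parts of $[0,t]$, using $g(\bar x^{T_k},\bar u^{T_k})=(y_1^{T_k}-y_0)/T_k\to\bar d^\infty$ and the measure bound from Step~1, I obtain
$$
\Bigl\|\tfrac{y^{T_k}(t)}{t}-\bar d^\infty\Bigr\|\le\tfrac{\|y_0\|+C\mu(\varepsilon')}{t}+L\varepsilon'+\Bigl\|\tfrac{y_1^{T_k}-y_0}{T_k}-\bar d^\infty\Bigr\|\qquad\forall t>0,
$$
uniformly in $k\ge k_0$. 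Choosing $\tau^*$ large enough that $\mu(B_k^{\varepsilon'})<\tau^*/2$ and that the right-hand side above is $\le\varepsilon'$ for all $t\in[\tau^*,2\tau^*]$ (also requiring $k$ large, using~\eqref{limit_tuple}), the interval $[\tau^*,2\tau^*]$ contains a point $s_k$ of the good set at which, simultaneously, $\|x^{T_k}(s_k)-\bar x^\infty\|\le 2\varepsilon'$ and $\|y^{T_k}(s_k)/s_k-\bar d^\infty\|\le\varepsilon'$; symmetrically one finds $s_k'\in[T_k-2\tau^*,T_k-\tau^*]$ with the analogous properties at the other end of the interval.

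\medskip
\emph{Step~3 (propagation and conclusion).} By dynamic programming, the restriction of $(x^{T_k},y^{T_k},u^{T_k})$ to $[s_k,s_k']$ is the unique locally optimal solution of $\mathcal{P}(s_k,s_k',x^{T_k}(s_k),x^{T_k}(s_k'),y^{T_k}(s_k)/s_k,y^{T_k}(s_k')/s_k')$. For $k$ large, all data lie in $2\varepsilon'$-neighborhoods of $(\bar x^\infty,\bar x^\infty,\bar d^\infty,\bar d^\infty)$ and $s_k'-s_k\ge T_k-4\tau^*\to+\infty$, so Proposition~\ref{prop_sensitivity_dyn} applies; by its local uniqueness part, the adjoint vectors $(p_x^{T_k},p_y^{T_k})$ of Assumption~\ref{H2_unique1} must coincide with those produced by the proposition, and estimate~\eqref{linear_turnpike_estimates_prop} yields, for every $t\in[s_k,s_k']$,
$$
\|x^{T_k}(t)-\bar x^\infty\|+\|p_x^{T_k}(t)-\bar p_x^\infty\|\le C\varepsilon'\bigl(e^{-\nu(t-s_k)}+e^{-\nu(s_k'-t)}\bigr)+\tfrac{C}{s_k'-s_k}\bigl(\varepsilon'+(s_k+s_k')\varepsilon'\bigr).
$$
Setting $\tau_0=\tau_1=3\tau^*$ (so that $\tau_0-s_k\ge\tau^*$ and $s_k'-(T_k-\tau_1)\ge\tau^*$), and then choosing $\varepsilon'$ small enough and $k_0$ large enough, the right-hand side at $t=\tau_0$ and at $t=T_k-\tau_1$ is strictly less than $\varepsilon$, which gives the claim.

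\medskip
The main obstacle is Step~2: producing an entry time $s_k$ that is bounded independently of $k$ and at which the rescaled variable $y^{T_k}(s_k)/s_k$ is already close to $\bar d^\infty$. This forces $\tau^*$ to be chosen simultaneously large enough to absorb the $\mathrm{O}(1/t)$ error in the $y$-estimate and to ensure that the good set has positive measure in $[\tau^*,2\tau^*]$; once this is secured, the fact established in Section~\ref{sec-unifneighb} that $\varepsilon_0$ in Proposition~\ref{prop_sensitivity_dyn} does not depend on $(t_0,t_1)$ when $t_1-t_0$ is large makes the proposition directly applicable on $[s_k,s_k']$.
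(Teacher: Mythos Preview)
Your proof is correct and takes a genuinely different route from the paper's. The paper proceeds in three stages: first (Lemma~\ref{lem_strict1}) it uses dissipativity to show that for any $\varphi(k)\to+\infty$ one has $x^{T_k}(\varphi(k)s)\to\bar x^\infty$ for a.e.\ $s\in[0,1]$ (after extracting subsequences); second (Lemma~\ref{lemsansC}) it bootstraps with Proposition~\ref{prop_sensitivity_dyn} to upgrade this to $s=1$; finally the present lemma is proved by a short contradiction: if no fixed $\tau_0$ worked, one could build a function $\varphi$ for which Lemma~\ref{lemsansC} fails. By contrast, you extract from dissipativity the \emph{quantitative} information that the integral $\int_0^{T_k}\alpha(\cdots)\,dt$ is bounded uniformly in $k$ (not merely that its Ces\`aro mean tends to $0$), whence a Chebyshev bound on the bad set and the direct existence of good entry times in a fixed window $[\tau^*,2\tau^*]$. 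This collapses the paper's three-lemma chain into a single argument and avoids both the a.e.\ subsequence extraction and the contradiction; conversely, the paper's decomposition isolates Lemmas~\ref{lem_strict1}--\ref{lemsansC} as standalone statements, and its proof of the present lemma is only a few lines once those are in place.

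One minor quantitative slip to fix: in Step~2 you claim the right-hand side of your $y$-estimate can be made $\le\varepsilon'$, but the term $L\varepsilon'$ is irreducible (it does not vanish as $\tau^*\to\infty$ or $k\to\infty$), so the bound is really $\le C'\varepsilon'$ with $C'=L+1$ or so. The fix is immediate---require $C'\varepsilon'\le\varepsilon_0$ when choosing $\varepsilon'$---and the rest of the argument goes through, since in Step~3 the final bound on $\Vert x^{T_k}(\tau_0)-\bar x^\infty\Vert+\Vert p_x^{T_k}(\tau_0)-\bar p_x^\infty\Vert$ is of order $C\varepsilon'$ plus terms that vanish as $k\to\infty$, hence can be made $<\varepsilon$.
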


\begin{proof}
Let us prove the existence of $\tau_0$ (and only for the $x$ component), the property for $\tau_1$ being similar.
By contradiction, assume that there exists $\varepsilon>0$ such that, for every $\tau>0$, for every $k\in\N^*$, there exists $j(k)\geq k$ such that $\Vert x^{T_{j(k)}}(\tau)-\bar x^\infty\Vert\geq\varepsilon$. Applying this property to $\tau=k$, we obtain that, for every $k\in\N^*$, there exists $j(k)\geq k$ such that $\Vert x^{T_{j(k)}}(k)-\bar x^\infty\Vert\geq\varepsilon$. Without loss of generality, we assume that $k\mapsto j(k)$ is increasing. Then, there exists a function $\varphi:\N^*\rightarrow\N^*$ such that $\phi(j(k))=k$ for every $k\in\N^*$. By Lemma \ref{lemsansC}, we have $x^{T_{j(k)}}(k)=x^{T_{j(k)}}(\phi(j(k)))\rightarrow\bar x^\infty$. This is a contradiction.
\end{proof}

\paragraph{Second (and final) improvement by bootstrap.}
With respect to Lemma \ref{lemsansC}, in Lemma \ref{lem_tau0tau1} we have obtained that the trajectory $(x^{T_k}(\cdot),p^{T_k}(\cdot))$, starting at $t=0$ (resp., starting backward in time at $t=T_k$), enters an $\varepsilon_0$-neighborhood of $(\bar x^\infty,\bar p_x^\infty)$ in finite time $\tau_0$ (resp., $\tau_1$).
This is instrumental to conclude, what we now do.

Applying again Proposition \ref{prop_sensitivity_dyn}, but now, on the time interval $[\tau_0,T_k-\tau_1]$, we infer from \eqref{linear_turnpike_estimates_prop} (again a bootstrap) that, for $k$ large enough,
\begin{equation*}
\begin{split}
& \Vert x^{T_k}(t)-\bar x^\infty\Vert + \Vert p^{T_k}_x(t)-\bar p_x^\infty\Vert \leq C\varepsilon_0 \left( e^{-\nu (t-\tau_0)} + e^{-\nu(T_k-\tau_1-t)} \right) \\
& \qquad\qquad\qquad\qquad + \frac{C}{T_k-\tau_0-\tau_1} \left( \varepsilon_0 + \Vert y^{T_k}(T_k-\tau_1)-y^{T_k}(\tau_0)-(T_k-\tau_0-\tau_1)\bar d^\infty\Vert \right) ,  \\
& \Vert y^{T_k}(t)-t\bar d^\infty\Vert \leq C\left( \varepsilon_0 + \Vert y^{T_k}(\tau_0)-\tau_0\bar d^\infty\Vert + \Vert y^{T_k}(T_k-\tau_1)-y^{T_k}(\tau_0)-(T_k-\tau_0-\tau_1)\bar d^\infty\Vert \right), \\
& \Vert p_y^{T_k}-\bar p_y^\infty\Vert \leq \frac{C}{T_k-\tau_0-\tau_1} \left( \varepsilon_0 + \Vert y^{T_k}(T_k-\tau_1)-y^{T_k}(\tau_0)-(T_k-\tau_0-\tau_1)\bar d^\infty\Vert \right) ,
\end{split}
\end{equation*}
for every $t\in[\tau_0,T_k-\tau_1]$.
Since $y^{T_k}(0)=y_0$ and $\dot y^{T_k}(t)=g(x^{T_k}(t),u^{T_k}(t))$ is bounded (see Assumption \ref{H2_unique1}), the sequence $(y^{T_k}(\tau_0)-\tau_0\bar d^\infty)_{k\in\N^*}$ is bounded. Besides, since
$$
y^{T_k}(T_k-\tau_1) = y^{T_k}(T_k) - \int_{T_k-\tau_1}^{T_k} g(x^{T_k}(t),u^{T_k}(t))\, dt ,
$$
since $y^{T_k}(T_k) = y_1^{T_k}$ and since $y_1^{T_k} = y_0+T_k\bar d^\infty$ (see \eqref{y1T}), we infer that the sequence $(y^{T_k}(T_k-\tau_1)-y^{T_k}(\tau_0)-(T_k-\tau_0-\tau_1)\bar d^\infty)_{k\in\N^*}$ is bounded.
Therefore, taking a larger constant $C$ if necessary, we obtain
\begin{equation}\label{estimlinTk}
\begin{split}
& \Vert x^{T_k}(t)-\bar x^\infty\Vert + \Vert p^{T_k}_x(t)-\bar p_x^\infty\Vert \leq C \left( \frac{1}{T_k} + e^{-\nu t} + e^{-\nu(T_k-t)} \right) \\
& \Vert y^{T_k}(t)-t\bar d^\infty\Vert \leq C, \qquad
\Vert p_y^{T_k}-\bar p_y^\infty\Vert \leq \frac{C}{T_k}  \qquad\qquad \forall  t\in[0,T_k] .
\end{split}
\end{equation}

\paragraph{Conclusion.}
Let us summarize what we have done. Given any closure point $(\bar x^\infty,\bar d^\infty,\bar u^\infty,\bar p_x^\infty,\bar p_y^\infty)$ of the bounded family $(\bar x^T,\frac{y_1^T}{T},\bar u^T,\bar p_x^T,\bar p_y^T)_{T\geq T_0}$ and any sequence $T_k\rightarrow+\infty$ of positive real numbers such that $(\bar x^{T_k},\frac{y_1^{T_k}}{T_k},\bar u^{T_k},\bar p_x^{T_k},\bar p_y^{T_k})$ converges to $(\bar x^\infty,\bar d^\infty,\bar u^\infty,\bar p_x^\infty,\bar p_y^\infty)$ (see \eqref{limit_tuple}), we have established the estimates \eqref{estimlinTk}, and thus, using Lemma \ref{CV_rate_static} and the triangular inequality, and taking a larger $C$ if necessary,
\begin{equation*}
\begin{split}
& \Vert x^{T_k}(t)-\bar x^{T_k}\Vert + \Vert p^{T_k}_x(t)-\bar p_x^{T_k}\Vert \leq C \left( \frac{1}{T_k} + e^{-\nu t} + e^{-\nu(T_k-t)} \right) \\
& \Vert y^{T_k}(t)-t \frac{y_1^{T_k}}{T_k}\Vert \leq C, \qquad
\Vert p_y^{T_k}-\bar p_y^{T_k}\Vert \leq \frac{C}{T_k}  \qquad\qquad \forall  t\in[0,T_k] .
\end{split}
\end{equation*}
It follows that
\begin{equation*}
\begin{split}
& \Vert x^T(t)-\bar x^T\Vert + \Vert p^T_x(t)-\bar p_x^T\Vert \leq C \left( \frac{1}{T} + e^{-\nu t} + e^{-\nu(T-t)} \right) \\
& \Vert y^T(t)-t \frac{y_1^T}{T}\Vert \leq C, \qquad
\Vert p_y^T-\bar p_y^T\Vert \leq \frac{C}{T}  \qquad\qquad \forall  t\in[0,T] .
\end{split}
\end{equation*}
Indeed, here, we use the following obvious fact%
\footnote{It suffices to prove that $0$ is the only possible closure point of the (bounded) function $h_1-h_2$ at $+\infty$. Let $a\in\R$ be such a closure point: there exists a sequence $x_k\rightarrow+\infty$ such that $h_1(x_k)-h_2(x_k)\rightarrow a$. Since the sequence $(h_2(x_k))_{k\in\N}$ is bounded, there exists a subsequence such that $h_2(x_{k_p})\rightarrow 0$. Hence, by assumption, $h_1(x_{k_p})-h_2(x_{k_p})\rightarrow 0$, and thus $a=0$.}: 
let $h_1$ and $h_2$ be two bounded functions on $\R$ such that, for every sequence $x_k\rightarrow+\infty$ for which $h_2(x_k)$ converges, we have $h_1(x_k)-h_2(x_k)\rightarrow 0$ as $k\rightarrow+\infty$; then $h_1(x)-h_2(x)\rightarrow 0$ as $x\rightarrow+\infty$.  

This gives \eqref{turnpikelin} and concludes the proof of Theorem \ref{turnpike_thm}.

\section{Examples}\label{sec_examples}
In this section, we illustrate the linear turnpike phenomenon on several practical examples, with numerical simulations.
Hereafter, we compute numerically the optimal solutions either with a direct method, using \texttt{AMPL} (see \cite{AMPL}) combined with \texttt{IpOpt} (see \cite{IPOPT}), following a full discretization of the problem, or with the variant of the shooting method introduced in \cite{TZ}. In both cases, we initialize the numerical method at the turnpike, thus guaranteeing its convergence. 

\subsection{The Zermelo problem}
The Zermelo problem is a famous optimal control problem, often used as a simple model or exercise to illustrate theory or practice of optimal control (see \cite{trelat_book}).
In this problem, we consider a boat moving with constant speed along a river of constant width $\ell$, in which there is a current $c(x)$. The movement of the center of mass of the boat is governed by the control system
\begin{equation*}
\begin{split}
\dot{x}(t)&=v(t)\sin u(t),\qquad\qquad\qquad x(0)=0,\\
\dot{y}(t)&=v(t)\cos u(t)+c(x(t)),\qquad y(0)=0,
\end{split}
\end{equation*}
where the controls are the modulus of the speed $v(t)\in[0,v_{\mathrm{max}}]$ and the angle $u(t)\in\R$ of the axis of the boat with respect to the axis $(0y)$ (see Figure \ref{fig_zermelo}). 
\begin{figure}[h]
\begin{center}
\includegraphics[width=9cm]{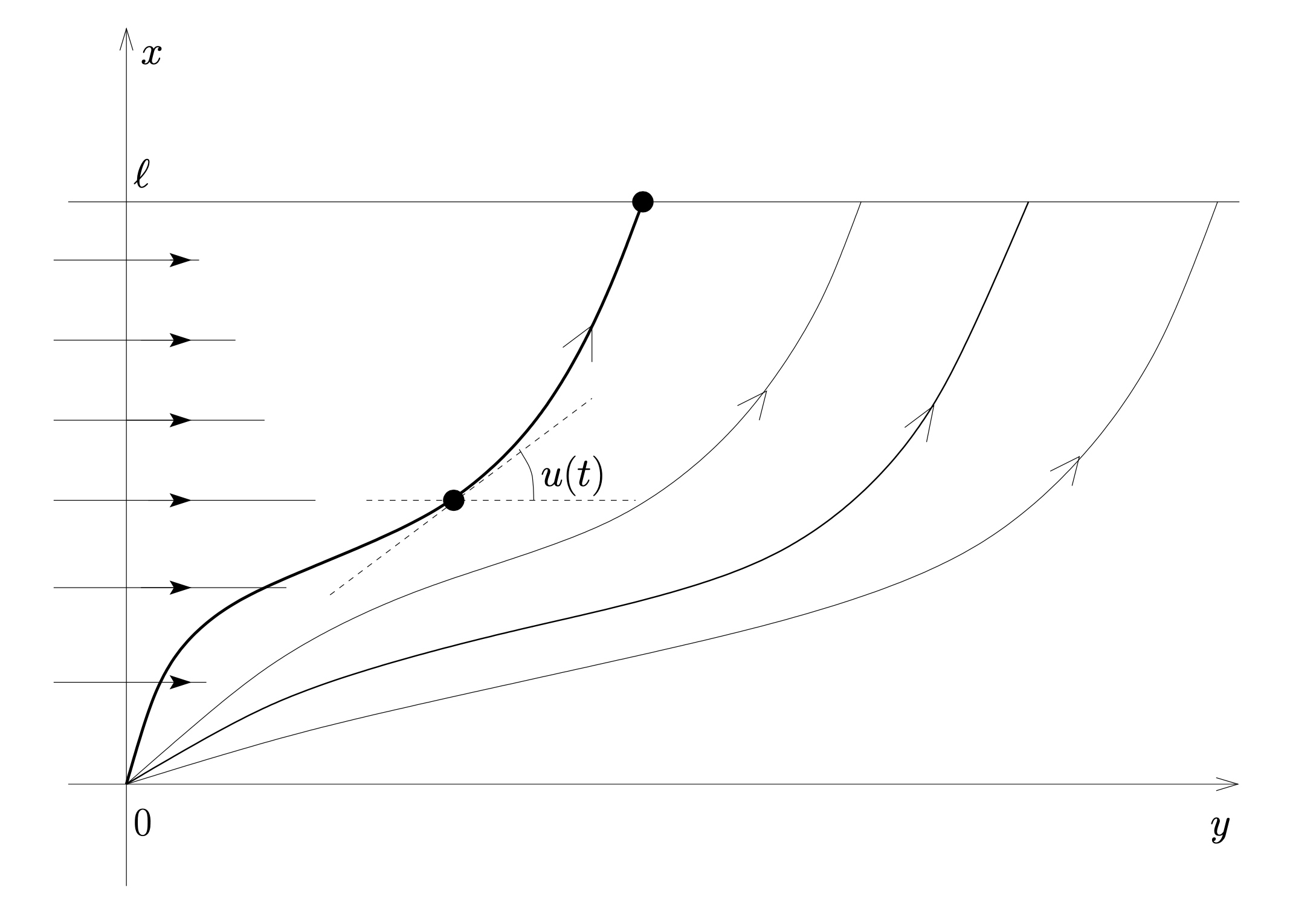}
\end{center}
\caption{Zermelo problem.}\label{fig_zermelo}
\end{figure}

Many variants of terminal conditions and of cost functionals can be considered. Here, in order to illustrate the linear turnpike phenomenon, given some $T>0$ fixed (to be chosen large enough), we consider the optimal control problem of steering in time $T$ this control system from the initial point $(0,0)$ to the final point $(\ell,5T)$ (on the opposite river), by minimizing the cost functional $\int_0^T v(t)^2\, dt$. 

We take $L=2$, $v_{\mathrm{max}}=1.1$ and $c(x)=3+x(L-x)$. The current in the river is so strong that we always have $\dot y(t)>0$, whatever the control may be; hence $y(\cdot)$ is bound to be increasing. 

The turnpike-static optimal control problem consists of minimizing $v^2$ under the constraints $v\sin u=0$ and $T(v\cos u+c(y))=5T$. It has the unique solution $\bar x^T=1$, $\bar y^T(t)=5t$, $\bar x^T=1$, $\bar u^T=0$, $\bar v^T=1$, $\bar p_x^T=0$, $\bar p_y^T=2$.

\begin{figure}[H]
\centerline{\includegraphics[width=17cm]{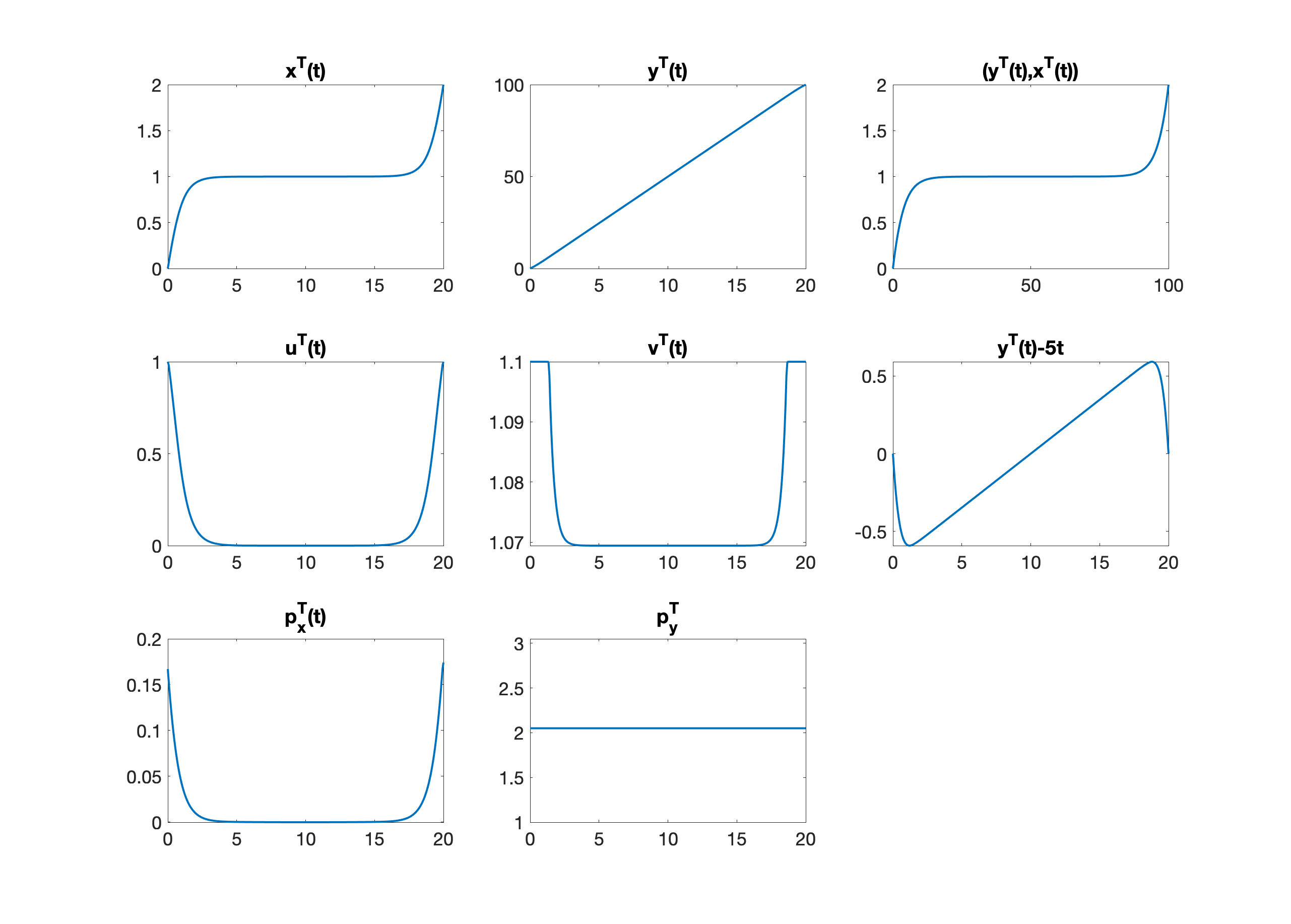}}
\caption{Numerical results for the Zermelo problem}\label{fig_zermelo}
\end{figure}

The numerical results for $T=20$ are reported on Figure \ref{fig_zermelo}.
We observe, as expected, that $\vert y^T(t)-\bar y^T(t)\vert$ is bounded by a constant. Besides, we observe that, in the long middle part of the time interval, $(x^T(t),u^T(t),p_x^T(\cdot))$ is exponentially close to $(\bar x^T,\bar u^T,\bar p_x^T)$ while $(v^T(t),p_y^T)$ is linearly close to $(\bar v^T,\bar p_y^T)$.

Note that the optimal control $v^T(\cdot)$ has two bang arcs along which $v^T(t)=v_{\mathrm{max}}=1.1$: a short one at the beginning, and another short one at the end.

\subsection{Optimal control model of a runner}
Consider the runner model developed in \cite{aft, aftalion_bonnans, AftalionTrelat_RSOS, AftalionTrelat_JOMB}, originating from \cite{keller1974optimal}:
\begin{equation}\label{equ}
\begin{array}{rlll}
\displaystyle\dot x(t) &=& v(t)  & x(0)=0, \quad x(t_f)=d \\[2mm]
\displaystyle\dot v(t) &=& \displaystyle-\frac{v(t)}{\tau}+f(t) & v(0)=v^0 \\[2mm]
\displaystyle\dot e(t) &=& \sigma 
-f(t)v(t) & e(0)=e^0,\quad e(t_f)=0,\quad e(t)\geq 0 \\[2mm]
\displaystyle\dot f(t) &=& \gamma \left( u(t) (\Fmax-f(t)) - f(t) \right) \quad & 0\leq f(t)\leq \Fmax 
\end{array} 
\end{equation}
where $d>0$ is the prescribed distance to run, $e^0>0$ is the initial energy, $\tau>0$ is the friction coefficient related to the runner's economy, $\sigma>0$ is a constant standing for the energetic equivalent of the oxygen uptake $VO_2$, $\Fmax>0$ is a threshold upper bound for the force $f(t)$, $\gamma>0$ is the time constant of motor activation and $u(t)\in [-M,M]$ is the neural drive which is the control, where $M>0$ is some control bound. 
Here, $x(t)$ is the distance travelled at time $t$ by the runner, $v(t)$ the velocity, $e(t)$ the anaerobic energy, and $f(t)$ the propulsive force per unit mass. 
Actually, the typical values of the initial energy $e_0$ are such that the energy variable $e(t)$ is decreasing.
The minimization criterion is
\begin{equation}\label{cost} 
\min \left( t_f+ \frac{\alpha}{2} \int_0^{t_f} u(t)^2\, dt \right)
\end{equation} 
where $\alpha>0$ is a fixed constant and the final time $t_f$ is free.

Although, in the optimal control problem \eqref{equ}-\eqref{cost}, $t_f$, $v(t_f)$, $f(0)$ and $f(t_f)$ are let free, the problem can be reparametrized by the distance $s$, with the change of variable $t'(s)=\frac{dt}{ds}=\frac{1}{v(s)}$. In terms of $s$, the optimal control problem is
\begin{equation*}
\begin{array}{ll}
\displaystyle v'(s) = -\frac{1}{\tau}+\frac{f(s)}{v(s)} & v(0)=v^0 \\[4mm]
\displaystyle e'(s) = \frac{\sigma}{v(s)} 
- f(s) & e(0)=e^0,\quad e(d)=0,\quad e(s)\geq 0 \\[4mm]
\displaystyle f'(s) = \frac{\gamma}{v(s)} \left( u(s) (\Fmax-f(s)) - f(s) \right)  \quad & 0\leq f(s)\leq\Fmax   \\[4mm]
\displaystyle \min  \int_0^d \frac{1}{v(s)} \left( 1 + \frac{\alpha}{2} u(s)^2 \right) ds &
\end{array}
\end{equation*}
and now fits in the general framework developed in this paper.

Solving the ``static" problem leads to $\bar e(t)=e_0+(\sigma-\bar f\bar v)t$ (we re-express it in function of $t$) where
$$
\bar v = \frac{\tau}{2d} \left( e_0 + \sqrt{ e_0^2+4\frac{\sigma d^2}{\tau}} \right) , \qquad
\bar f = \frac{\bar v}{\tau} = \frac{1}{2d} \left( e_0 + \sqrt{ e_0^2+4\frac{\sigma d^2}{\tau}} \right) , \qquad
\bar u = \frac{\bar f}{\Fmax-\bar f} .
$$
For the numerical simulations, we take $d=1500$, $\tau=0.932$, $\sigma=22$, $\alpha=10^{-5}$, $\Fmax=8$, $\gamma=0.0025$, $v_0=3$, $e_0=4651$. 
We obtain $\bar v=6.2$, $\bar f=6.65$, $\bar u=4.92$.
The distance run ($1500$ meters) is large enough so that we observe the linear turnpike phenomenon.
The numerical results are reported on Figure \ref{fig_runner}.

\begin{figure}[H]
\centerline{\includegraphics[width=16cm]{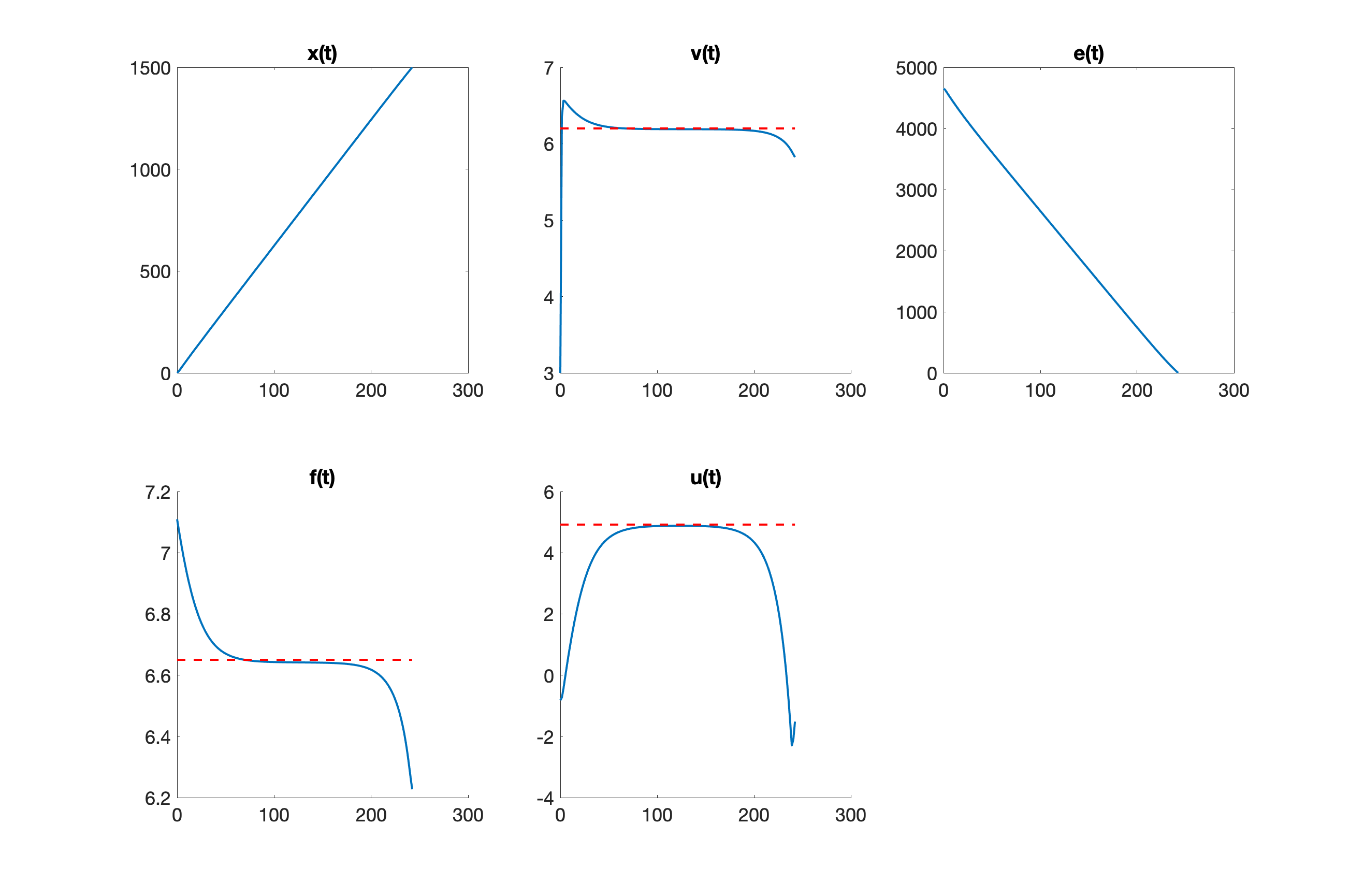}}
\caption{Numerical results for the runner problem}\label{fig_runner}
\end{figure}

This runner optimal control problem has actually been the initial point for the present paper, and the author warmly thanks Amandine Aftalion for having raised such an interesting problem. 
The linear turnpike phenomenon has been recently exploited in \cite{AftalionTrelat_JOMB}.

\appendix

\section{Appendix}\label{sec_app}
This appendix is devoted to illustrating the comments done in Remark \ref{rem_local}. We take a very simple example and we give numerical simulations showing the competition between two global turnpikes, or, at the level of the initialization of numerical methods, between local and global turnpikes.

We consider the one-dimensional optimal control problem 
\begin{equation}\label{xcube}
\begin{split}
& \dot x = -3x + 3x^3 + u, \qquad x(0)=x_0,\quad x(T)=x_f\textrm{ or free} \\
& \min \int_0^T \left(  (x(t)-x_d)^2 + (u(t)-u_d)^2   \right) dt
\end{split}
\end{equation}
The corresponding static problem is 
\begin{equation}\label{xcube_static}
\min_{(x,u)\ \mid\ -3x + 3x^3 + u=0} \left( (x-x_d)^2 + (u-u_d)^2\right) .
\end{equation}

\subsection{Competition between two (global) turnpikes}
We take $x_d=1$ and $u_d=3.47197$. The choice of $u_d$ is done so that the static problem \eqref{xcube_static} has two global minima, at $\bar x_1=-1.347372066$ and $\bar x_2=0.5939615956$, see Figure \ref{fig1}.

\begin{figure}[H]
\centerline{\includegraphics[width=8cm]{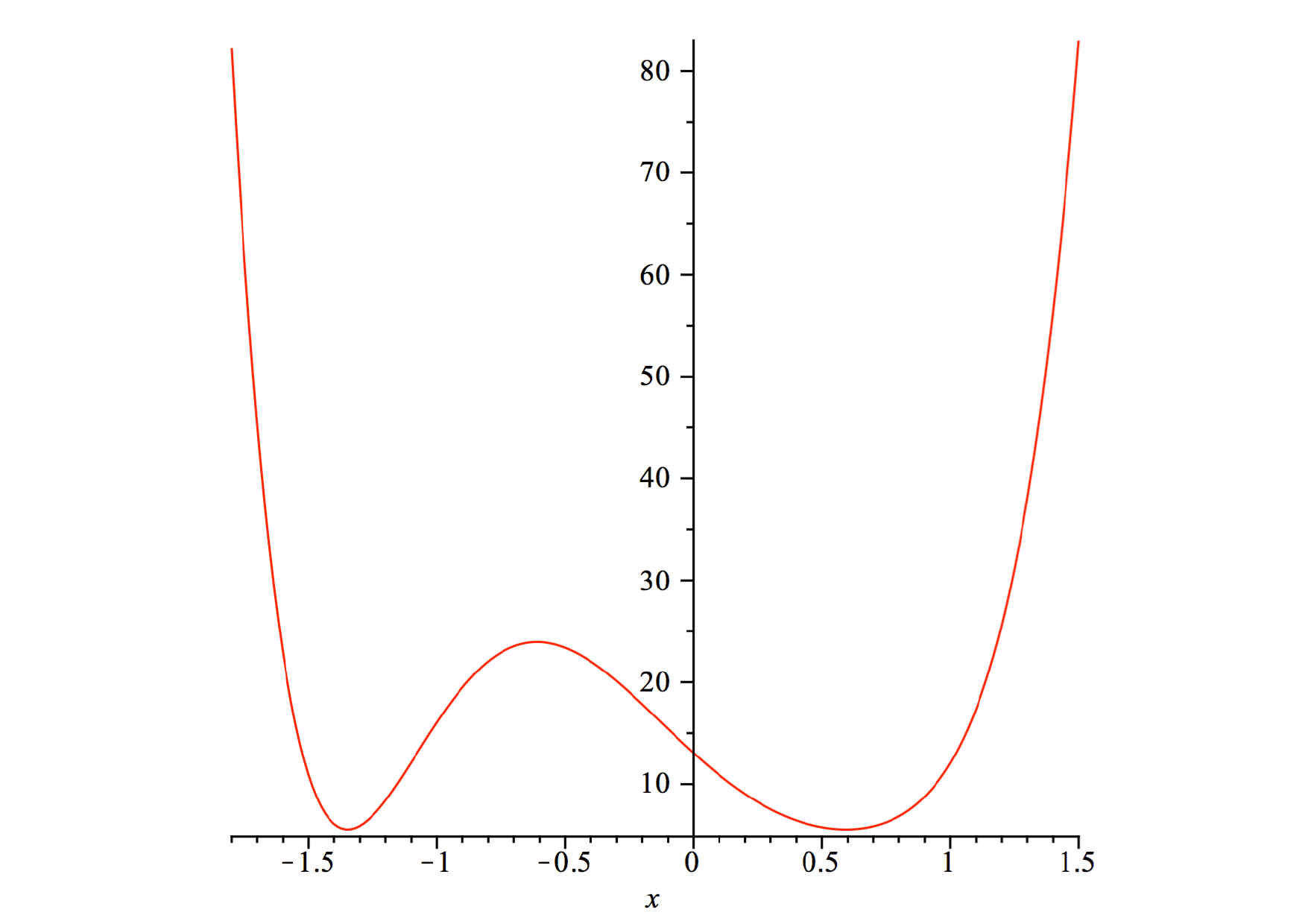}}
\caption{Plot of the function $x\mapsto (x-1)^2 + (3x - 3x^3-3.47197)^2$}
\label{fig1}
\end{figure}

On Figure \ref{fig23}, in dashed blue, we have computed the optimal trajectory with $x_0=-5$, $x_f=-1$, $T=10$: we observe a turnpike phenomenon around $\bar x_1$.
In solid red, we have computed the optimal trajectory with $x_0=2$, $x_f=1$, $T=10$: we observe a turnpike phenomenon around $\bar x_2$.

\begin{figure}[H]
\begin{center}
\includegraphics[width=11cm]{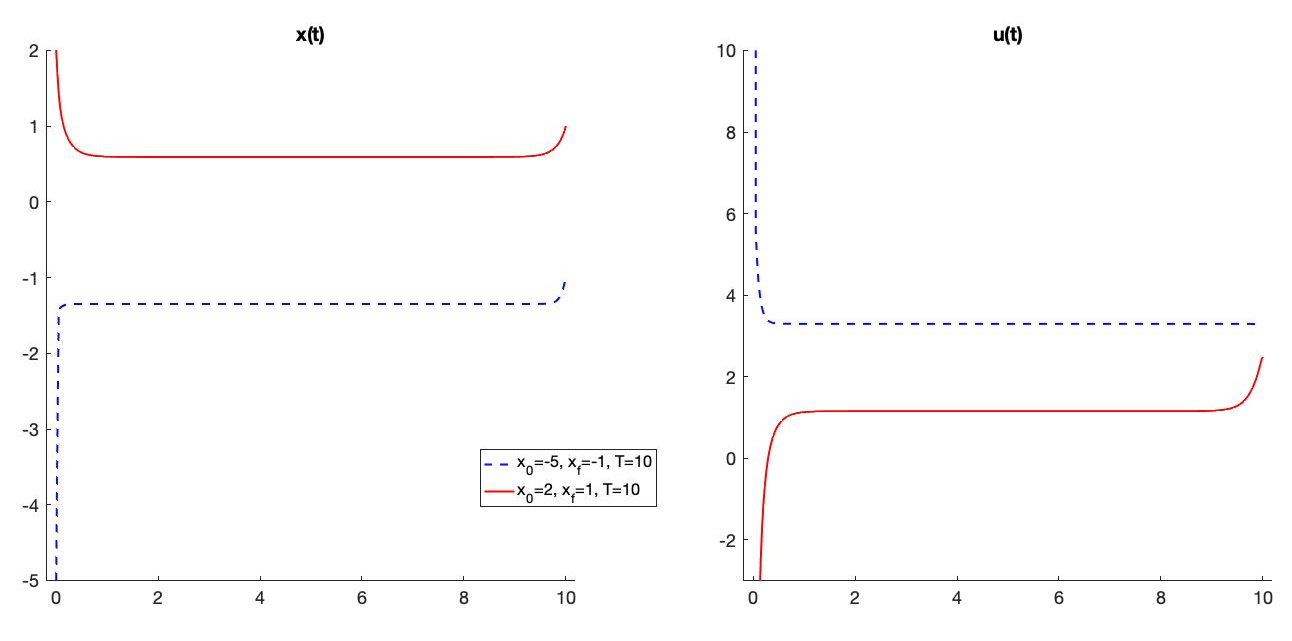}
\end{center}
\caption{Global turnpikes around $\bar x_1$ and $\bar x_2$.}
\label{fig23}
\end{figure}

The fact that the turnpike phenomenon is either around $\bar x_1$ or around $\bar x_2$ depends on the terminal conditions. For instance, if $x_0$ and $x_f$ are close to $\bar x_1$ (resp., $\bar x_2$) then the optimal trajectory will make a turnpike around $\bar x_1$ (resp., $\bar x_2$). But when the terminal conditions are farther, it is not clear to predict the behavior of the optimal trajectory.

Since the minima are global, we expect to observe a competition between both turnpikes, depending on the terminal conditions (see \cite{Rapaport2}).
Let us provide some numerical simulations illustrating this competition.
To facilitate the understanding, we consider the problem with $x_f$ free.
It is interesting to note that the numerical result strongly depends on the initialization of the numerical method.
Here, to compute numerically the optimal solutions of \eqref{xcube}, we use \texttt{AMPL} (see \cite{AMPL}) combined with \texttt{IpOpt} (see \cite{IPOPT}): the trajectory and the controls are discretized (the control is piecewise constant and the trajectory is piecewise linear, on a given subdivision that is chosen fine enough), and we initialize the trajectory with the same constant value over all the subdivision.
Then according to the value of this initialization, we can make emerge such or such turnpike, and all solutions are anyway optimal.

\begin{figure}[H]
\begin{center}
\includegraphics[width=11cm]{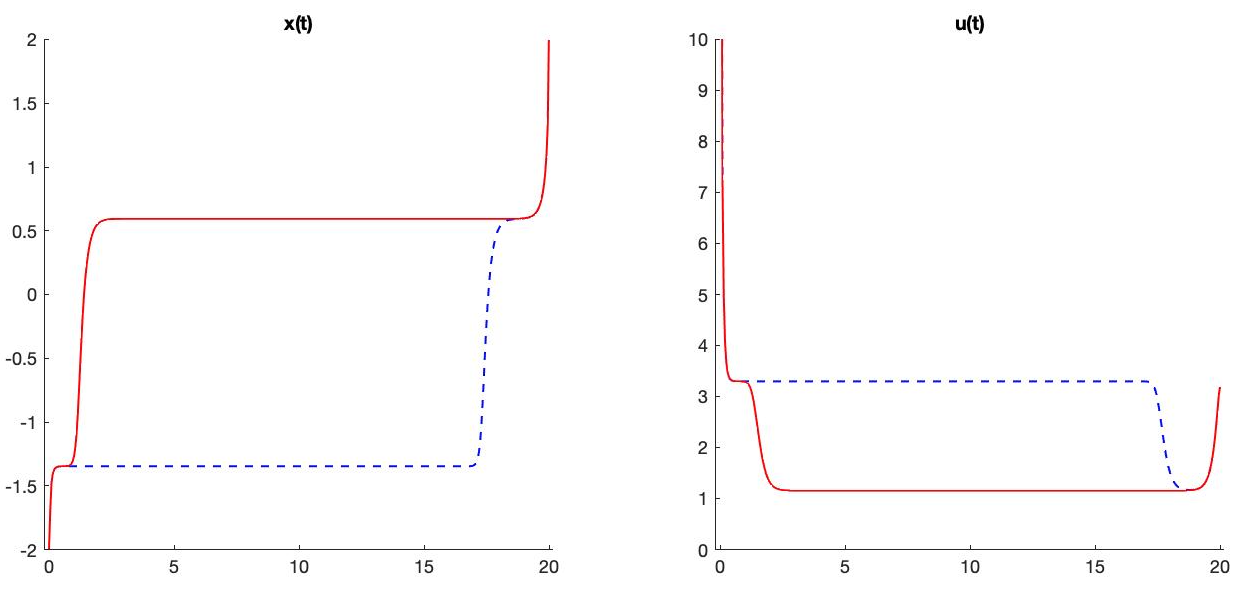}
\end{center}
\caption{$x_0=-2$, $x_f$ free, $T=20$.}
\label{fig45}
\end{figure}

On Figure \ref{fig45}, we have taken $T=20$, $x_0=-2$ (and $x_f$ is let free). In dashed blue, we have initialized the trajectory to the constant trajectory $\bar x_1$, and we then obtain an optimal trajectory which stays essentially near $\bar x_1=-1.347372066$, with a kind of ``hesitation" towards $\bar x_2=0.5939615956$ near the end.
In solid red, we have initialized the trajectory to the constant trajectory $\bar x_2$, and we obtain a trajectory staying essentially near $\bar x_2$, with a kind of ``hesitation" towards $\bar x_1$ near the beginning.
We stress that the two solutions are optimal: both have a cost $C\simeq 6.2822$.
We could make emerge other similar trajectories, which ``hesitate" between the two turnpikes.  All of them are optimal, or, at least, ``quasi-optimal" (there is a small error due to switches from one turnpike to the other).

\subsection{Local versus global turnpike}\label{app_local_versus_global}
We take $x_d=1$ and $u_d=1$. The choice of $u_d$ is now such that the static problem \eqref{xcube_static} has a unique global solution $\bar x = 0.781538640850898$, see Figure \ref{fig6}.
But it has also a local solution $\bar x_{loc}=-1.10551208794920$.

\begin{figure}[H]
\centerline{\includegraphics[width=8cm]{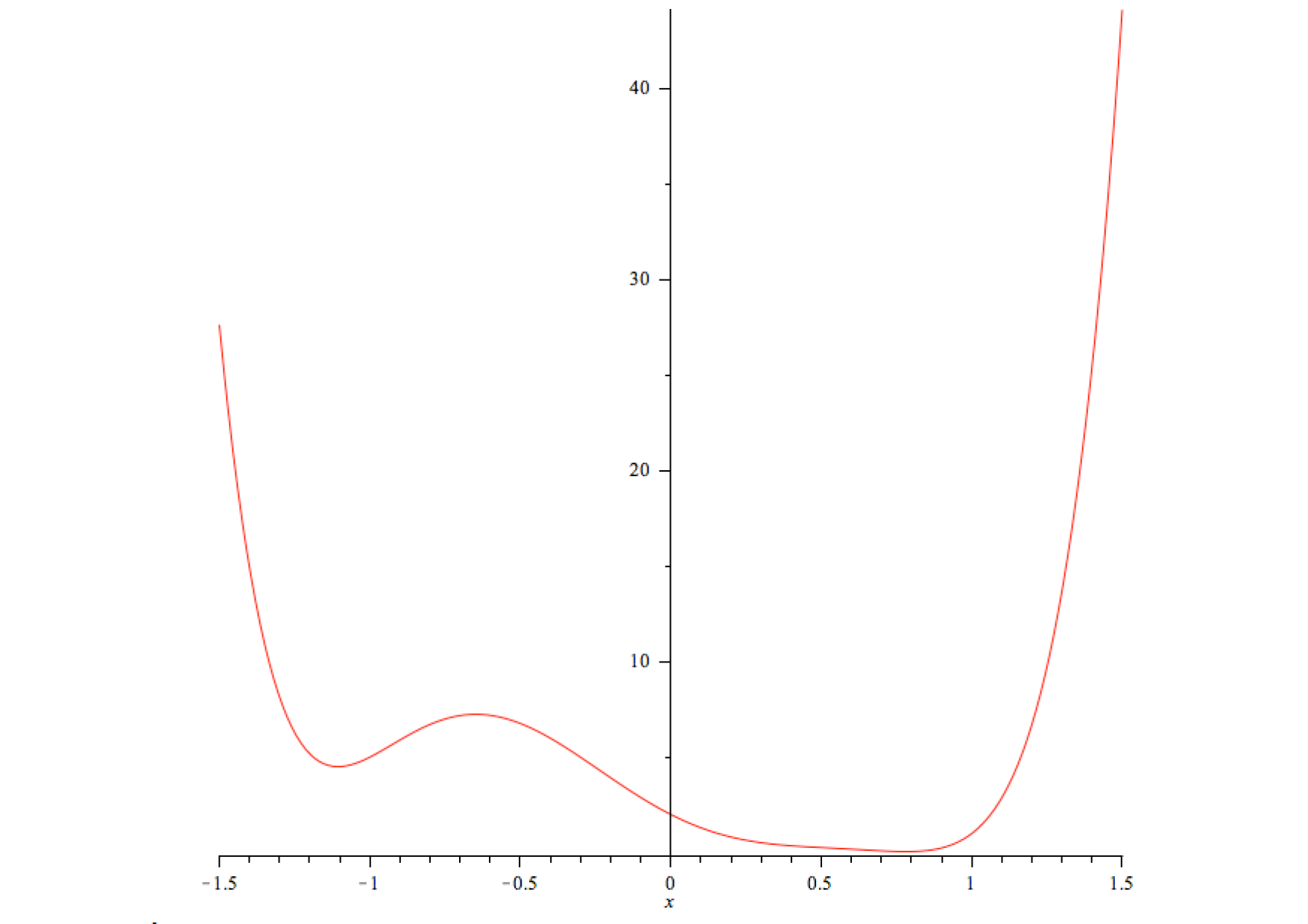}}
\caption{Plot of the function $x\mapsto (x-1)^2 + (3x - 3x^3-1)^2$}
\label{fig6}
\end{figure}

The global minimum $\bar x$ is a global turnpike, while the local minimum $\bar x$ is a local turnpike. In the numerical simulations, when performing either an optimization or a Newton method (shooting), we compute local solutions. Hence, we must expect that the numerical results depend on the initialization. To check global optimality, we have to compare the costs.

\begin{figure}[H]
\begin{center}
\includegraphics[width=12cm]{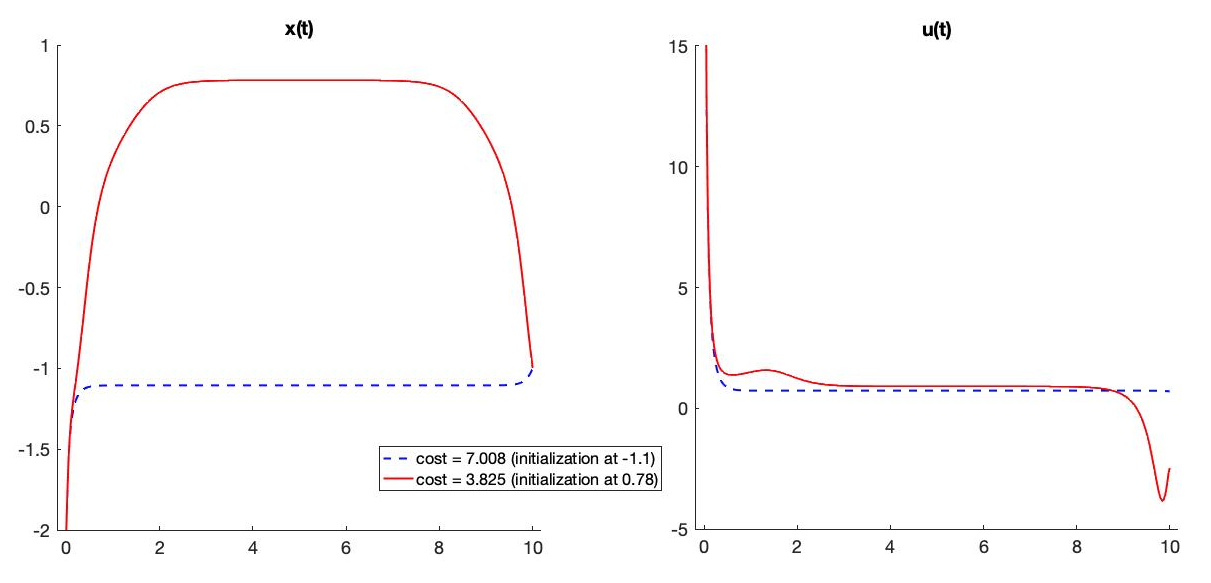}
\end{center}
\caption{$x_0=-2$, $x_f=-1$, $T=10$.}
\label{fig78}
\end{figure}

On Figure \ref{fig78}, in dashed blue, we have initialized the code with the constant trajectory $\bar x_{loc}$. We observe a turnpike around $\bar x_{loc}=-1.10551208794920$. The cost is $C\simeq 7.008$.
But this is a local turnpike only. This trajectory that we obtain is only locally optimal, and is not globally optimal.
In solid red, we have initialized the code with the constant trajectory $\bar x$. We observe a turnpike around $\bar x = 0.781538640850898$. The cost is $C\simeq 3.825$ and is lower than the one of the previous one.
Here, we have actually computed the globally optimal trajectory. This is the global turnpike.

\medskip

It is also interesting to see what happens if we take $T$ much smaller. Let us take $T=2$.
On Figure \ref{fig910}, in dashed blue, we have initialized the code with the constant trajectory $\bar x$. We observe a trend to the turnpike around $\bar x = 0.781538640850898$. The cost is $C\simeq 17.792$.
But this trajectory, now, is not globally optimal.
In solid red, we have initialized the code with the constant trajectory $\bar x_{loc}$. We observe a turnpike around $\bar x_{loc}=-1.347372066$. The cost is $C\simeq 16.322$.
It is the globally optimal trajectory.

\begin{figure}[H]
\begin{center}
\includegraphics[width=10cm]{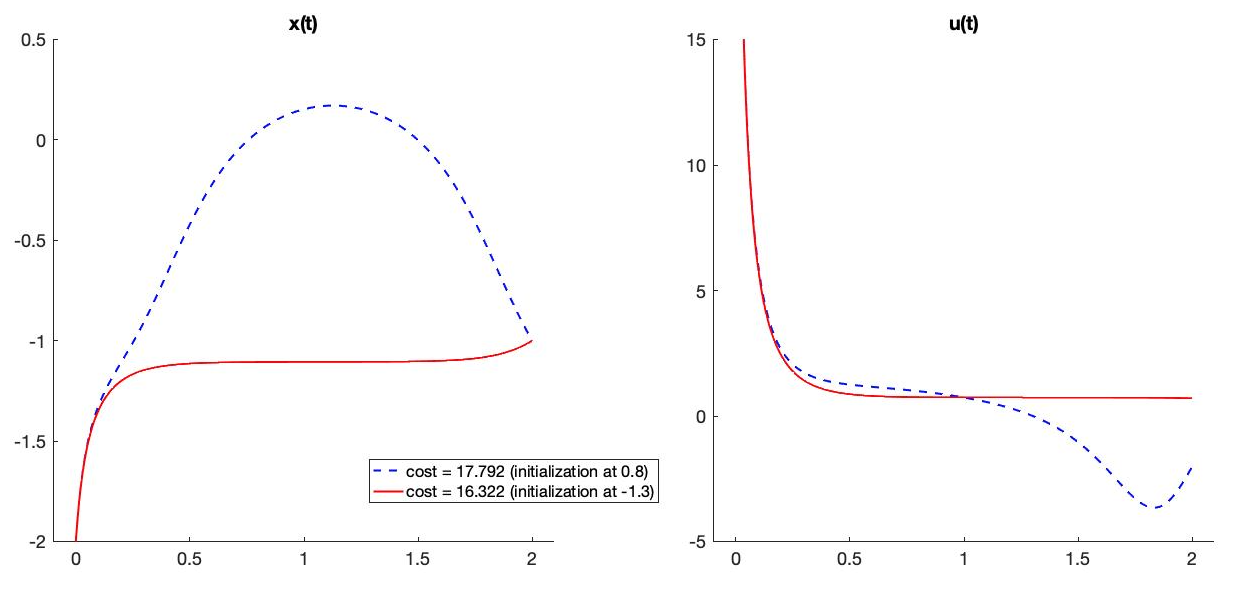} 
\end{center}
\caption{$x_0=-2$, $x_f=-1$, $T=2$.}
\label{fig910}
\end{figure}

We can search a time $2<T_c<10$ for which both previous initializations give equivalent turnpikes around $\bar x_{loc}$ and $\bar x$ (with same cost). This is done hereafter.
What is important is that, in large time $T$, we have indeed the global turnpike around $\bar x$.

\begin{figure}[H]
\begin{center}
\includegraphics[width=15cm]{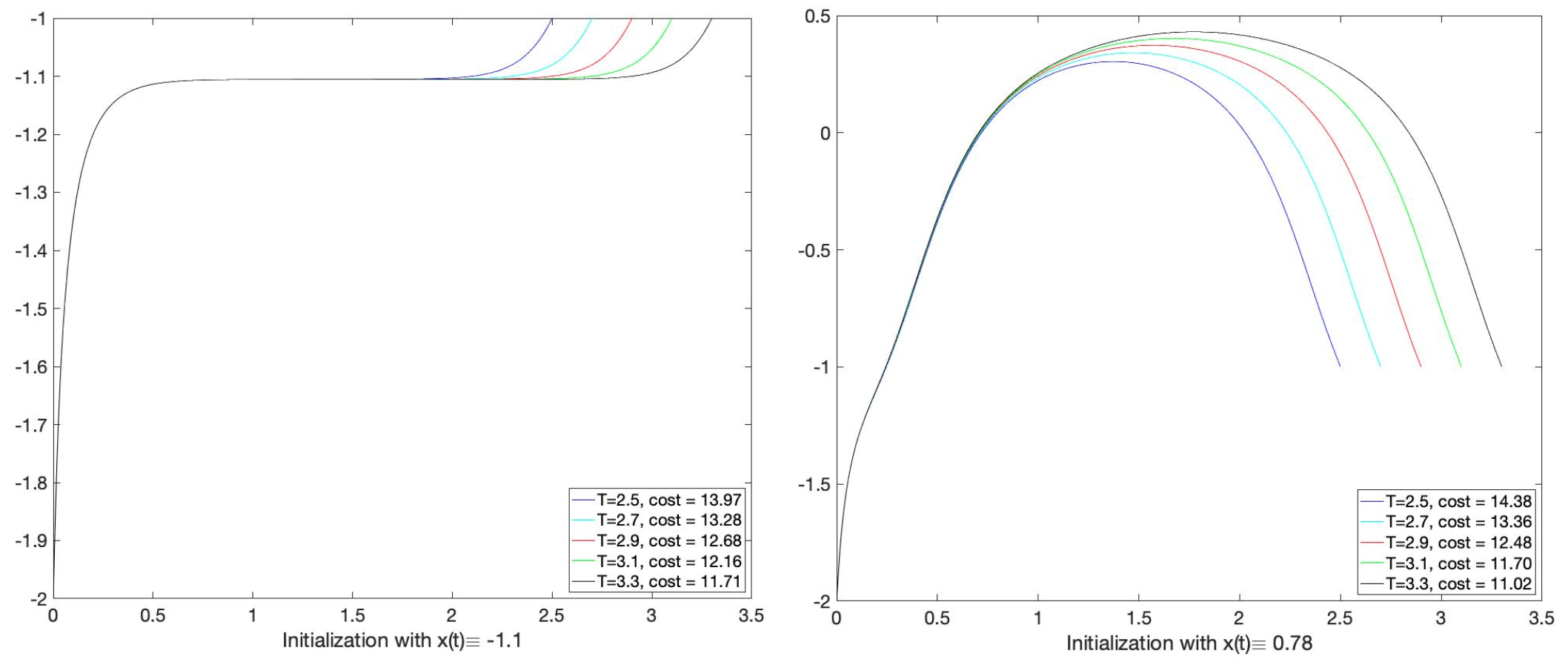} 
\end{center}
\caption{$x_0=-2$, $x_f=-1$}
\label{fig1112}
\end{figure}

On Figure \ref{fig1112}, at the left, the code is initialized with $x(t)\equiv -1.1$, in order to promote the turnpike around $\bar x_{loc}=-1.347372066$.
At the right, the code is initialized with $x(t)\equiv 0.78$, in order to promote the turnpike around $\bar x = 0.781538640850898$.
We have computed trajectories for the following successives values of $T$:
$2.5$, $2.7$, $2.9$, $3.1$, $3.3$.
We observe that the blue and cyan trajectories at the top are optimal (see the value of their cost); and that the red, green and black trajectories at the bottom are optimal.
The bifurcation occurs around $T=2.9$.

Finally, on Figure \ref{figlast}, we represent the global optimal trajectory.
For $T\lesssim 2.9$, the global optimal trajectory makes a turnpike around $\bar x_{loc}=-1.347372066$, which is a local minimizer of the optimal static problem. For $T\geq 2.9$ we have a bifurcation and the global optimal trajectory makes a turnpike around $\bar x = 0.781538640850898$, which is the global minimizer of the optimal static problem.

\begin{figure}[H]
\centerline{\includegraphics[width=8cm]{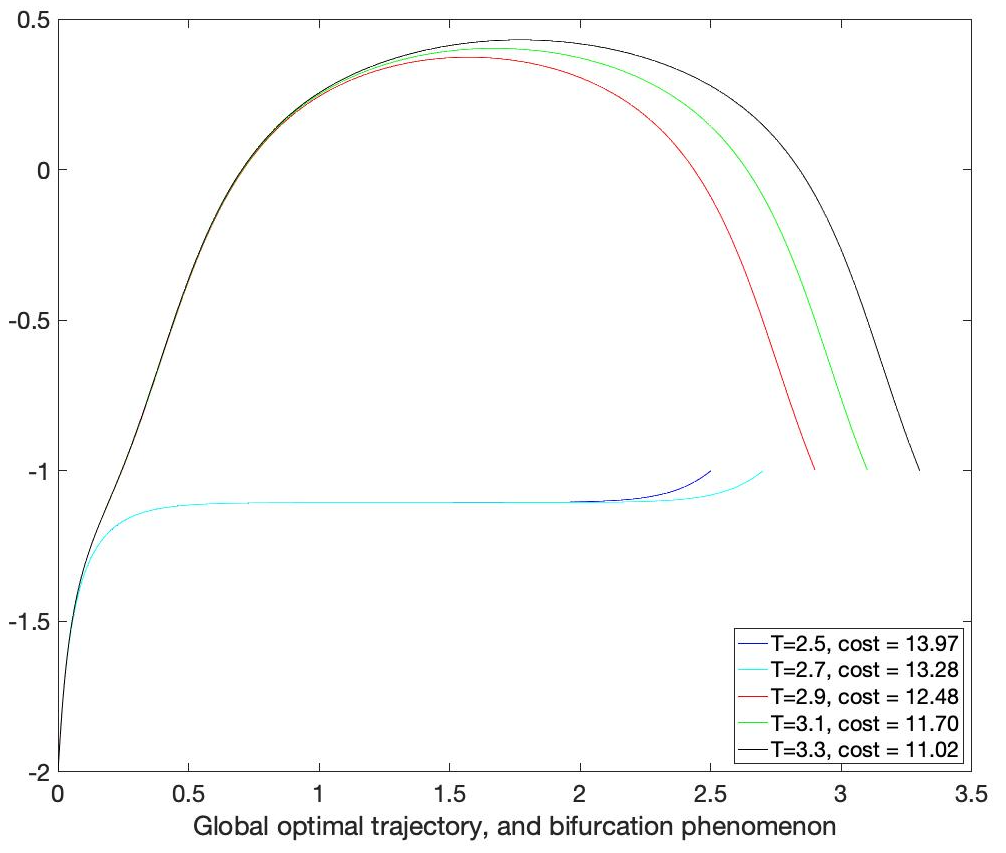}}
\caption{$x_0=-2$, $x_f=-1$}
\label{figlast}
\end{figure}

\small

\end{document}